\numberwithin{equation}{section}
\newtheorem{thm}{Theorem}[section]
\newtheorem{prop}[thm]{Proposition}
\newtheorem{lem}[thm]{Lemma}
\newtheorem{cor}[thm]{Corollary}
\theoremstyle{definition}
\newtheorem{definition}[thm]{Definition}
\newtheorem{rem}[thm]{Remark}
\newcommand{\bk}{\boldsymbol{k}}
\newcommand{\bl}{\boldsymbol{l}}
\newcommand{\shp}{\mathcyr{sh}}
\begin{document}

\title[A new deformation of multiple zeta value]
{A new deformation of multiple zeta value}
\author{Yoshihiro Takeyama}
\address{Department of Mathematics,
    Institute of Pure and Applied Sciences,
    University of Tsukuba, Tsukuba, Ibaraki 305-8571, Japan}
\email{takeyama@math.tsukuba.ac.jp}
\thanks{This work was supported by JSPS KAKENHI Grant Number 22K03243.}

\begin{abstract}
    We introduce a new deformation of multiple zeta value (MZV).
    It has one parameter $\omega$ satisfying $0<\omega<2$ and
    recovers MZV in the limit as $\omega \to +0$.
    It is defined in the same algebraic framework as
    a $q$-analogue of multiple zeta value ($q$MZV)
    by using a multiple integral.
    We prove that our deformed multiple zeta value satisfies
    the double shuffle relations which are satisfied by $q$MZVs.
    We also prove the extended double Ohno relations, which are proved
    for ($q$)MZVs by Hirose, Sato and Seki, by using
    a multiple integral whose integrand contains
    the hyperbolic gamma function due to Ruijsenaars.
\end{abstract}
\maketitle

\setcounter{section}{0}
\setcounter{equation}{0}

\section{Introduction}

We call a (non-empty) tuple of positive integers
an \textit{index}.
An index $\bk=(k_{1}, \ldots , k_{r})$ is said to be
\textit{admissible} if $k_{r}\ge 2$.
For an admissible index
$\bk=(k_{1}, \ldots , k_{r})$,
the \textit{multiple zeta value} $\zeta(\bk)$ (MZV for short) is defined by
\begin{align*}
    \zeta(\bk)=\sum_{0<m_{1}<\cdots <m_{r}}\frac{1}{m_{1}^{k_{1}} \cdots m_{r}^{k_{r}}}.
\end{align*}
Recently, a $q$-analogue of MZV ($q$MZV), which is a one-parameter deformation of MZV,
is studied. For example, the so-called Bradley-Zhao model of it is defined by
\begin{align*}
    \zeta_{q}(\bk)=
    \sum_{0<m_{1}<\cdots <m_{r}}
    \frac{q^{(k_{1}-1)m_{1}+\cdots +(k_{r}-1)m_{r}}}{[m_{1}]^{k_{1}} \cdots [m_{r}]^{k_{r}}},
\end{align*}
where $q$ is a deformation parameter satisfying $0<q<1$ and
$[n]=(1-q^{n})/(1-q)$ is the $q$-integer.
In the limit as $q\to 1$,
it recovers the MZV $\zeta(\bk)$.
It is known that $q$MZVs satisfy
many of the same linear relations of MZVs.

In this paper we introduce a new deformation of MZV,
which we call an $\omega$-deformed multiple zeta value,
and show that it satisfies the double shuffle relations
and the extended double Ohno relations
which are satisfied by $q$MZVs.

We recall briefly the definition of $q$MZVs
and the above-mentioned two relations.
Let $\mathfrak{H}=\mathcal{C}\langle a, b \rangle$
be the non-commutative polynomial ring of two variables $a$ and $b$
whose coefficient ring is $\mathcal{C}=\mathbb{Q}[\hbar]$.
Set $\mathcal{A}=\{\hbar b\}\cup \{ba^{k} \mid k\ge 1\}$.
We denote by $\widehat{\mathfrak{H}^{0}}$
the $\mathcal{C}$-submodule of $\mathfrak{H}$ spanned by $1$ and
the words in $\mathcal{A}$ whose rightmost letter is not $\hbar b$.
We endow $\mathbb{R}$ with $\mathcal{C}$-module structure
such that $\hbar$ acts as multiplication by $1-q$.
Then the $q$MZV is defined as an image of
a $\mathcal{C}$-linear map
$Z_{q}: \widehat{\mathfrak{H}^{0}} \to \mathbb{R}$
(see Section \ref{subsec:qMZV} for details).
For example, the Bradley-Zhao model $\zeta_{q}(\bk)$ is equal to
$Z_{q}(e_{k_{1}} \cdots e_{k_{r}})$,
where $e_{k}=b(a+\hbar)a^{k-1}$ for $k\ge 1$.

The double shuffle relations of $q$MZVs are described as follows
(see, e.g., \cite{Zhaobook}).
On the domain $\widehat{\mathfrak{H}^{0}}$ of the map $Z_{q}$,
two associative and commutative binary operations
called the shuffle product $\shp_{\hbar}$ and
the harmonic product $\ast_{\hbar}$ are defined.
Then the map $Z_{q}$ is a homomorphism with respect to
both of the two products, and hence we have
$Z_{q}(w \, \shp_{\hbar} \, w'-w \ast_{\hbar} w')=0$
for any $w, w' \in \widehat{\mathfrak{H}^{0}}$.
This equality gives linear relations among $q$MZVs
whose coefficients belong to $\mathbb{Q}[1-q]$,
which are called the \textit{double shuffle relations}.

In \cite[Corollary 3.22]{Satoh}, Satoh proves that
the shuffle product and the harmonic product are related to each other
via an anti-involution $\sigma$ defined on
the scalar extension $\mathbb{Q}[\hbar, \hbar^{-1}]\otimes_{\mathcal{C}}\mathfrak{H}$.
This result implies that
the double shuffle relations follow from
the shuffle product relations $Z_{q}(w\,\shp_{\hbar}\,w')=Z_{q}(w)Z_{q}(w')$
and the resummation duality $Z_{q}(\sigma(w))=Z_{q}(w)$ proved in \cite[Theorem 4]{qDS}.

The extended double Ohno relations of $q$MZVs are
obtained by Hirose, Sato and Seki \cite{HSS}.
For an admissible index $\bk=(k_{1}, \ldots , k_{r})$
and non-negative integers $m$ and $n$,
we define the \textit{double Ohno sum} $O_{m, n}(\bk)$ by
\begin{align*}
    O_{m, n}(\bk)=\sum_{\substack{
    m_{1}, \ldots , m_{r}, n_{1}, \ldots , n_{r}\ge 0 \\
    m_{1}+\cdots+m_{r}=m                              \\
    n_{1}+\cdots +n_{r}=n
    }}
    \zeta_{q}(k_{1}+m_{1}+n_{1}, \ldots , k_{r}+m_{r}+n_{r}).
\end{align*}
The extended double Ohno relations are
$\mathbb{Z}$-linear relations of the double Ohno sums,
which also give the same relations for MZVs
in the limit as $q\to 1$.

Although we do not write down an explicit form of
the extended double Ohno relations here,
we state two corollaries
(see \cite{HSS} for details).
For this purpose,
we recall the definition of the dual index $\bk^{\dagger}$
of an admissible index $\bk$.
Write an admissible index $\bk$ in the form
\begin{align*}
    \bk=(\{1\}^{b_{1}-1}, a_{1}+1, \ldots , \{1\}^{b_{r}-1}, a_{r}+1)
\end{align*}
with positive integers $a_{1}, \ldots , a_{r}$ and
$b_{1}, \ldots , b_{r}$.
Then its dual $\bk^{\dagger}$ is defined by
\begin{align*}
    \bk^{\dagger}=(\{1\}^{a_{r}-1}, b_{r}+1, \ldots , \{1\}^{a_{1}-1}, b_{1}+1).
\end{align*}
For example, $(1, 3, 1, 1, 2)^{\dagger}=(4, 1, 3)$.
For an admissible index
$\bk=(k_{1}, \ldots , k_{r})$ and non-negative integer $m$,
the \textit{Ohno sum} $O_{m}(\bk)$ is defined by
\begin{align*}
    O_{m}(\bk)=O_{m, 0}(\bk)=
    \sum_{\substack{m_{1}, \ldots , m_{r}\ge 0 \\
    m_{1}+\cdots +m_{r}=m}}
    \zeta_{q}(k_{1}+m_{1}, \ldots , k_{r}+m_{r}).
\end{align*}
The first corollary is the Ohno relations:
\begin{align*}
    O_{m}(\bk)=O_{m}(\bk^{\dagger})
\end{align*}
for any admissible index $\bk$ and
non-negative integer $m$.
Originally they are proved by Ohno \cite{O} for MZVs,
and Bradley proved them for $q$MZVs in \cite{Bradley}.
The second corollary is the double Ohno relations:
\begin{align*}
    O_{m, n}(\bk)=O_{m, n}(\bk^{\dagger})
\end{align*}
for any non-negative integers $m, n$
and any admissible index $\bk$ of the form
\begin{align*}
    \bk=(\{2\}^{n_{0}}, 1, \{2\}^{n_{1}}, 3,
    \ldots ,
    \{2\}^{n_{2d-2}}, 1, \{2\}^{n_{2d-1}}, 3,
    \{2\}^{2d})
\end{align*}
with some non-negative integers
$d, n_{0}, \ldots , n_{2d}$,
which is called
a \textit{BBBL-type} index after \cite[Section 7]{BBBL}.
In \cite{HMOS} it is proved that
the double Ohno relations follow from the Ohno relations.
The extended double Ohno relations,
which contain the Ohno relations and their double version,
are proved by using the technique called
the connected sum method due to
Seki and Yamamoto \cite{SekiYamamoto}.

The $\omega$-deformed multiple zeta value,
which we newly introduce in this paper,
is defined as follows.
A deformation parameter which we denote by $\omega$ is assumed to satisfy $0<\omega<2$.
We endow the complex number field $\mathbb{C}$
with $\mathcal{C}$-module structure such that
$\hbar$ acts as multiplication by $2\pi i \omega$.
Then we will define, by abuse of notation, a $\mathcal{C}$-linear map
$Z_{\omega}: \widehat{\mathfrak{H}^{0}} \to \mathbb{C}$
and call the value $Z_{\omega}(w)$ with $w \in \widehat{\mathfrak{H}^{0}}$
an \textit{$\omega$-deformed multiple zeta value}
($\omega$MZV for short).
For example, the $\omega$MZV corresponding to the Bradley-Zhao model
of $q$MZV is given by
\begin{align*}
    \zeta_{\omega}(\bk)
     & =Z_{\omega}(e_{k_{1}} \cdots e_{k_{r}}) \\
     & =
    (2\pi i \omega)^{|\bk|}
    \prod_{a=1}^{r}\int_{-\epsilon+i\,\mathbb{R}}\frac{dt_{a}}{e^{2\pi i t_{a}}-1}
    \prod_{a=1}^{r}
    \frac{e^{2\pi i \omega (k_{a}-1)(t_{1}+\cdots +t_{a})}}
    {(1-e^{2\pi i \omega (t_{1}+\cdots +t_{a})})^{k_{a}}},
\end{align*}
where $|\bk|=\sum_{a=1}^{r}k_{a}$ is the \textit{weight} of
$\bk=(k_{1}, \ldots, k_{r})$
and $0<\epsilon<\min{(1, 1/r\omega)}$.
The above integral is absolutely convergent if $k_{r}\ge 2$
and does not depend on $\epsilon$.
In the limit as $\omega \to +0$,
it recovers the MZV $\zeta(\bk)$.
Similar deformation is introduced by Kato and the author
for multiple $L$-values except MZVs in \cite{KT}.

The main results of this paper are as follows.
First, $\omega$MZVs satisfy
the shuffle product relations
$Z_{\omega}(w\,\shp_{\hbar}\,w')=Z_{\omega}(w)Z_{\omega}(w')$
and the duality $Z_{\omega}(\sigma(w))=Z_{\omega}(w)$
for any $w, w' \in \widehat{\mathfrak{H}^{0}}$.
Hence $\omega$MZVs satisfy the same double shuffle relations as $q$MZVs,
where $\hbar$ acts as multiplication by $2\pi i \omega$.
Second, the above-mentioned $\omega$MZVs $\zeta_{\omega}(\bk)$
satisfy the extended double Ohno relations
in the same form as ($q$)MZVs.
In the proof we make use of a multiple integral
which corresponds to the connected sum given in \cite{HSS},
whose integrand contains the hyperbolic gamma function
defined by Ruijsenaars \cite{R1}.

The paper is organized as follows.
Section \ref{sec:preliminaries} gives some preliminaries
on $q$MZVs and their relations.
In Section \ref{sec:def-omegaMZV} we define
the $\omega$MZVs, and prove the double shuffle relations
in Section \ref{sec:DS-omegaMZV}.
In Section \ref{sec:extended-DO} we prove
the extended double Ohno relations of $\omega$MZVs.
Appendix \ref{sec:app1} and Appendix \ref{sec:app2} provide
technical proofs of propositions in this paper.


\section{Preliminaries}\label{sec:preliminaries}

\subsection{Shuffle product and Harmonic product}

Let $\hbar$ be a formal variable.
We set $\mathcal{C}=\mathbb{Q}[\hbar]$ and denote
the non-commutative polynomial ring
of two variables $a$ and $b$ over $\mathcal{C}$
by $\mathfrak{H}=\mathcal{C}\langle a, b \rangle$.
For $k \ge 1$ we set
\begin{align*}
    g_{k}=ba^{k}, \qquad e_{k}=b(a+\hbar)a^{k-1}.
\end{align*}
Then the set
\begin{align*}
    \mathcal{A}=\{\hbar b\} \cup \{ba^{k} \mid k \ge 1\}=\{e_{1}-g_{1}\} \cup \{g_{k} \mid k\ge 1\}.
\end{align*}
is algebraically independent.
We denote by $\mathcal{M}$ the set consisting of
$1$ and monomials in $\mathcal{A}$,
and by $\mathcal{M}_{0}$ the subset of $\mathcal{M}$
consisting of the monomials
whose rightmost letter is not $e_{1}-g_{1}$.
In other words, $\mathcal{M}_{0}$ is the set of elements
of the form
\begin{align}
    (e_{1}-g_{1})^{\alpha_{1}} g_{\beta_{1}+1}
    \cdots  (e_{1}-g_{1})^{\alpha_{r}} g_{\beta_{r}+1}
    \label{eq:basis-monomial}
\end{align}
with $r \ge 0$ and $\alpha_{1}, \ldots , \alpha_{r}, \beta_{1}, \ldots, \beta_{r}\ge 0$.
We denote by $\widehat{\mathfrak{H}^{0}}$ (resp. $\widehat{\mathfrak{H}^{1}}$)
the $\mathcal{C}$-submodule of $\mathfrak{H}$ spanned by $\mathcal{M}_{0}$
(resp. $\mathcal{M}$).

The shuffle product $\shp_{\hbar}$ on $\mathfrak{H}$
is the $\mathcal{C}$-bilinear binary operation
uniquely defined by the following properties:
\begin{enumerate}
    \item For any $w \in \mathfrak{H}$,
          it holds that $w \,\shp_{\hbar}\, 1=w$ and $1\,\shp_{\hbar}\,w=w$.
    \item For any $w, w'\in \mathfrak{H}$, it holds that
          \begin{align*}
               &
              wa\,\shp_{\hbar}\,w'a=(wa\,\shp_{\hbar}\, w'+w\,\shp_{\hbar}\, w'a+\hbar w\,\shp_{\hbar}\, w')a, \\
               &
              wb\,\shp_{\hbar}\, w'=w\,\shp_{\hbar}\, w'b=(w\,\shp_{\hbar}\, w')b.
          \end{align*}
\end{enumerate}
The shuffle product $\shp_{\hbar}$ is commutative and associative.
Since $e_{1}-g_{1}=\hbar b$, we have
\begin{align}
    (w(e_{1}-g_{1}))\,\shp_{\hbar}\,w'=w\,\shp_{\hbar}\,(w'(e_{1}-g_{1}))=
    (w\,\shp_{\hbar}\,w')(e_{1}-g_{1})
    \label{eq:e1g1-central}
\end{align}
for any $w, w' \in \mathfrak{H}$.
Consider the formal power series $H(X)$ in $X$ given by
\begin{align}
    H(X)=\sum_{k\ge 1}g_{k}X^{k-1}=g_{1}\frac{1}{1-aX}.
    \label{eq:def-H(X)}
\end{align}
Then it holds that
\begin{align}
     &
    w H(X) \,\shp_{\hbar}\, w' H(Y)
    \label{eq:shuffle-prod-aa} \\
     & =\left\{
    (1+\hbar X)\left(w H(X)\shp_{\hbar}\,w' \right)+
    (1+\hbar Y)\left(w\,\shp_{\hbar}\,w' H(Y)\right)+
    (w\,\shp_{\hbar}\,w')(e_{1}-g_{1}) \right\}
    \nonumber                  \\
     & \times
    H(X+Y+\hbar XY)
    \nonumber
\end{align}
for any $w, w' \in \mathfrak{H}$ as a formal power series in
$X$ and $Y$
(see \cite[Lemma 3.10]{qSMZV}).
Therefore, the $\mathcal{C}$-module $\widehat{\mathfrak{H}^{0}}$
is closed under the shuffle product.

We denote by $\mathfrak{z}$
the $\mathcal{C}$-module spanned by $\mathcal{A}$,
and define the symmetric $\mathcal{C}$-bilinear map
$\circ_{\hbar}: \mathfrak{z}\times \mathfrak{z} \to \mathfrak{z}$ by
\begin{align*}
    (e_{1}-g_{1})\circ_{\hbar}(e_{1}-g_{1})=\hbar (e_{1}-g_{1}), \quad
    (e_{1}-g_{1})\circ_{\hbar} g_{k}=\hbar g_{k}, \quad
    g_{k}\circ_{\hbar} g_{l}=g_{k+l}
\end{align*}
for $k, l \ge 1$.
The harmonic product $\ast_{\hbar}$ is
the $\mathcal{C}$-bilinear binary operation on
$\widehat{\mathfrak{H}^{1}}$
uniquely defined by the following properties:
\begin{enumerate}
    \item For any $w \in \widehat{\mathfrak{H}^{1}}$,
          it holds that $w \ast_{\hbar} 1=w$ and $1\ast_{\hbar} w=w$.
    \item For any $w, w'\in \widehat{\mathfrak{H}^{1}}$ and $u, v \in \mathcal{A}$,
          it holds that $(w u)\ast_{\hbar}(w' v)=(w\ast_{\hbar} w' v)u+(wu\ast_{\hbar} w')v+
              (w\ast_{\hbar} w')(u\circ_{\hbar}v)$.
\end{enumerate}
The harmonic product $\ast_{\hbar}$ is commutative and associative,
and the $\mathcal{C}$-module $\widehat{\mathfrak{H}^{0}}$
is closed under it.

We define the anti-involution $\sigma$ on the scalar extension
$\mathbb{Q}[\hbar, \hbar^{-1}] \otimes_{\mathcal{C}} \mathfrak{H}$ by
\begin{align*}
    \sigma(a)=\hbar b, \qquad \sigma(b)=\hbar^{-1}a
\end{align*}
and $\mathbb{Q}[\hbar, \hbar^{-1}]$-linearity.
Then it holds that
\begin{align}
     &
    \sigma(    (e_{1}-g_{1})^{\alpha_{1}} g_{\beta_{1}+1}
    \cdots  (e_{1}-g_{1})^{\alpha_{r}} g_{\beta_{r}+1} )
    \label{eq:dual-monomial}
    \\
     & =(e_{1}-g_{1})^{\beta_{r}} g_{\alpha_{r}+1}
    \cdots  (e_{1}-g_{1})^{\beta_{1}} g_{\alpha_{1}+1}
    \nonumber
\end{align}
for non-negative integers
$\alpha_{1}, \ldots , \alpha_{r}$
and $\beta_{1}, \ldots , \beta_{r}$.
Hence the map $\sigma$ restricted to
$\widehat{\mathfrak{H}^{0}}$
is a $\mathcal{C}$-linear isomorphism.

\begin{prop}[\cite{Satoh}, Corollary 3.22]
    \label{prop:sigma}
    For any $w, w' \in \widehat{\mathfrak{H}^{0}}$,
    we have
    $w\,\ast_{\hbar}\, w'=\sigma(\sigma(w)\shp_{\hbar}\sigma(w'))$.
\end{prop}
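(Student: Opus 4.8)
The plan is to establish the equivalent identity
\begin{equation*}
\sigma(p\,\shp_\hbar\,q)=\sigma(p)\ast_\hbar\sigma(q)\qquad(p,q\in\widehat{\mathfrak{H}^{0}}),
\tag{$\star$}
\end{equation*}
$(\star)$ is equivalent to the assertion because $\sigma$ is an involution on $\mathbb{Q}[\hbar,\hbar^{-1}]\otimes_{\mathcal{C}}\mathfrak{H}$ (indeed $\sigma^{2}(a)=\sigma(\hbar b)=a$ and $\sigma^{2}(b)=\sigma(\hbar^{-1}a)=b$) that restricts to a $\mathcal{C}$-linear automorphism of $\widehat{\mathfrak{H}^{0}}$ by \eqref{eq:dual-monomial}: applying $\sigma$ to $(\star)$ with $p=\sigma(w)$ and $q=\sigma(w')$ returns the stated formula. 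Both sides of $(\star)$ are $\mathcal{C}$-bilinear, so it suffices to take $p,q$ in the monomial basis $\mathcal{M}_{0}$; and since every non-unit monomial in $\mathcal{M}_{0}$ ends in a letter $g_{k}$, it is in turn enough to prove, for all $w,w'\in\widehat{\mathfrak{H}^{1}}$, the equality of formal power series in $X,Y$
\begin{equation*}
\sigma\bigl(wH(X)\,\shp_\hbar\,w'H(Y)\bigr)=\sigma\bigl(wH(X)\bigr)\ast_\hbar\sigma\bigl(w'H(Y)\bigr),
\tag{$\star\star$}
\end{equation*}
the cases $p=1$ and $q=1$ being immediate.

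The computational heart of the matter is the identity
\begin{equation*}
\sigma\bigl(wH(X)\bigr)=\widetilde{H}(X)\,\sigma(w),\qquad
\widetilde{H}(X):=\frac{1}{1-(e_{1}-g_{1})X}\,g_{1}=\sum_{k\ge1}(e_{1}-g_{1})^{k-1}g_{1}X^{k-1},
\end{equation*}
valid for every $w$; it follows by a direct computation from $H(X)=\sum_{k\ge1}ba^{k}X^{k-1}$, $\sigma(a)=\hbar b$, $\sigma(b)=\hbar^{-1}a$ and the relation $b^{k}a=\hbar^{-(k-1)}(e_{1}-g_{1})^{k-1}g_{1}$ (recall $e_{1}-g_{1}=\hbar b$ and $g_{1}=ba$), which in particular gives $\sigma(g_{k})=(e_{1}-g_{1})^{k-1}g_{1}$. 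Substituting this and $\sigma(e_{1}-g_{1})=a$ into the shuffle product formula \eqref{eq:shuffle-prod-aa} and running a suitable induction on the total weight of $(w,w')$ (the weight of a word in $\mathfrak{H}$ being its number of letters $a,b$) — expanding the left-hand side of $(\star\star)$ by \eqref{eq:shuffle-prod-aa} and applying the induction hypothesis to each resulting term — reduces $(\star\star)$ to the following ``harmonic product formula'', the $\ast_\hbar$-analogue of \eqref{eq:shuffle-prod-aa}:
\begin{align*}
\widetilde{H}(X)P\ast_\hbar\widetilde{H}(Y)Q
=\widetilde{H}(X+Y+\hbar XY)\Bigl\{&(1+\hbar X)\bigl(\widetilde{H}(X)P\ast_\hbar Q\bigr)+(1+\hbar Y)\bigl(P\ast_\hbar\widetilde{H}(Y)Q\bigr)\\
&{}+a\,(P\ast_\hbar Q)\Bigr\}.
\end{align*}

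I would prove this last identity in the spirit of \cite[Lemma 3.10]{qSMZV}: write $\widetilde{H}(X)=g_{1}+(e_{1}-g_{1})X\widetilde{H}(X)$, split both left factors accordingly, apply the left-peeling form of the recursion for $\ast_\hbar$ (obtained by conjugating the defining recursion by the order-reversal of the $\mathcal{A}$-letters, which is an automorphism of $(\widehat{\mathfrak{H}^{1}},\ast_\hbar)$ because $\circ_\hbar$ is symmetric), and collect the contributions of $g_{1}\circ_\hbar g_{1}=g_{2}$, $g_{1}\circ_\hbar(e_{1}-g_{1})=\hbar g_{1}$ and $(e_{1}-g_{1})\circ_\hbar(e_{1}-g_{1})=\hbar(e_{1}-g_{1})$, which are precisely what produce the factors $a$, $1+\hbar X$, $1+\hbar Y$ and the substitution $X+Y+\hbar XY$ on the right.

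I expect the main obstacle to be bookkeeping rather than anything conceptual. One has to establish the harmonic product formula with care: the substitution $X\mapsto X+Y+\hbar XY$, equivalently the resolvent $\bigl(1-(e_{1}-g_{1})(X+Y+\hbar XY)\bigr)^{-1}$, must be matched against $\bigl(1-(e_{1}-g_{1})X\bigr)^{-1}$, $\bigl(1-(e_{1}-g_{1})Y\bigr)^{-1}$ and the weight shift $g_{k}\circ_\hbar g_{l}=g_{k+l}$, and this matching is where the real calculation lies. A further point needing attention is that $\sigma$ sends a word ending in $e_{1}-g_{1}=\hbar b$ to one beginning with $a$, so some intermediate expressions in the induction leave $\widehat{\mathfrak{H}^{1}}$ (even though both sides of $(\star\star)$ themselves stay in $\widehat{\mathfrak{H}^{0}}$, on account of collisions $g_{1}\cdot a=g_{2}$ at such junctions); to keep every step legitimate one should run the whole argument in the scalar extension $\mathbb{Q}[\hbar,\hbar^{-1}]\otimes_{\mathcal{C}}\mathfrak{H}$, extend $\shp_\hbar$ and $\ast_\hbar$ there, use \eqref{eq:e1g1-central} to move trailing factors $e_{1}-g_{1}$ out on the shuffle side, and restrict $(\star\star)$ to coefficients in $\widehat{\mathfrak{H}^{0}}$ only at the very end.
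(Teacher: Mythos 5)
The paper does not prove this proposition; it is quoted from Satoh's preprint (Corollary~3.22 there), so there is no in-paper argument to measure yours against. On its own terms, your outline is headed the right way: the reduction to $(\star)$ via $\sigma^{2}=\mathrm{id}$, the identity $\sigma(wH(X))=\widetilde{H}(X)\sigma(w)$ coming from $\sigma(g_{k})=(e_{1}-g_{1})^{k-1}g_{1}$, the use of the left-peeling form of the $\ast_{\hbar}$-recursion (legitimate since $\circ_{\hbar}$ is commutative), and the plan of pushing \eqref{eq:shuffle-prod-aa} through $\sigma$ and matching it against a harmonic analogue are all correct. Your displayed harmonic-product formula also checks out in low degrees: in bidegree $(0,0)$ both sides equal $g_{1}(g_{1}P\ast_{\hbar}Q)+g_{1}(P\ast_{\hbar}g_{1}Q)+g_{2}(P\ast_{\hbar}Q)$, using $g_{1}a=g_{2}$ and $g_{1}\circ_{\hbar}g_{1}=g_{2}$.

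Two things still stand between this and a proof. First, the harmonic-product formula is the entire computational content and is only asserted; everything else is routine. Second, and more seriously, the device you invoke for words ending in $e_{1}-g_{1}$ --- ``extend $\ast_{\hbar}$ to $\mathbb{Q}[\hbar,\hbar^{-1}]\otimes_{\mathcal{C}}\mathfrak{H}$'' --- is not available as stated: $\ast_{\hbar}$ is a quasi-shuffle product on the free algebra generated by $\mathcal{A}$, every word of which begins with $b$, and $a=\sigma(e_{1}-g_{1})$ lies outside that algebra, so there is no extension to the full scalar extension to appeal to. What the induction actually needs, when $w=u(e_{1}-g_{1})^{\alpha}$ and $w'=v(e_{1}-g_{1})^{\gamma}$ with $u,v\in\mathcal{M}_{0}$, is the more general identity in which the leading $a$-powers are pulled \emph{outside} every $\ast_{\hbar}$, namely (with $P,Q\in\widehat{\mathfrak{H}^{0}}$ and $Z=X+Y+\hbar XY$)
\begin{align*}
\widetilde{H}(X)a^{\alpha}P\ast_{\hbar}\widetilde{H}(Y)a^{\gamma}Q
=\widetilde{H}(Z)\Bigl\{&(1+\hbar X)\,a^{\gamma}\bigl(\widetilde{H}(X)a^{\alpha}P\ast_{\hbar}Q\bigr)
+(1+\hbar Y)\,a^{\alpha}\bigl(P\ast_{\hbar}\widetilde{H}(Y)a^{\gamma}Q\bigr)\\
&{}+a^{1+\alpha+\gamma}\bigl(P\ast_{\hbar}Q\bigr)\Bigr\},
\end{align*}
which is well defined in $\widehat{\mathfrak{H}^{0}}[[X,Y]]$ because the terminal $g_{1}$ of each term of $\widetilde{H}$ absorbs the adjacent $a$-power ($g_{1}a^{j}=g_{j+1}$); this is the true $\ast_{\hbar}$-counterpart of \eqref{eq:e1g1-central}. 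Your $\alpha=\gamma=0$ case does not imply it, since replacing $g_{1}$ by $g_{\alpha+1}$ changes the generating series. With the identity stated in this generality the induction closes, but proving it is still the bulk of the work you have deferred.
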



\subsection{A $q$-analogue of multiple zeta values}
\label{subsec:qMZV}

Let $q$ be a real parameter satisfying $0<q<1$.
We endow the real number field $\mathbb{R}$
with $\mathcal{C}$-module structure
such that $\hbar$ acts as multiplication by $1-q$.
For a positive integer $m$,
we define the $\mathcal{C}$-linear map
$F_{q}(\cdot \,|\, m): \mathfrak{z} \to \mathbb{R}$ by
\begin{align*}
    F_{q}(e_{1}-g_{1} \,|\, m)=1-q, \qquad
    F_{q}(g_{k}\,|\, m)=\left(\frac{q^{m}}{[m]}\right)^{k}
\end{align*}
for $k\ge 1$, where $[m]=(1-q^{m})/(1-q)$
is the $q$-integer.
{}From $e_{1}=(e_{1}-g_{1})+g_{1}$ and
$e_{k+1}=g_{k+1}+\hbar g_{k}$, we see that
\begin{align*}
    F_{q}(e_{k}\,|\, m)=\frac{q^{(k-1)m}}{[m]^{k}}
\end{align*}
for $k \ge 1$.

We define the $\mathcal{C}$-linear map
$Z_{q}: \widehat{\mathfrak{H}^{0}} \to \mathbb{R}$ by
$Z_{q}(1)=1$ and
\begin{align}
    Z_{q}(u_{1}\cdots u_{r})=
    \sum_{0<m_{1}<\cdots <m_{r}}
    \prod_{a=1}^{r}F_{q}(u_{a}\,|\,m_{a})
    \label{eq:def-qMZV}
\end{align}
for $u_{a} \in \mathcal{A} \,(1\le a \le r)$
such that $u_{r}\not=e_{1}-g_{1}$.
We call the value $Z_{q}(w)$ with $w \in \widehat{\mathfrak{H}^{0}}$
a \textit{$q$-analogue of MZV} ($q$MZV).

For an admissible index $\bk=(k_{1}, \ldots , k_{r})$,
we set
\begin{align*}
    \zeta_{q}(\bk)=Z_{q}(e_{k_{1}} \cdots e_{k_{r}})=
    \sum_{0<m_{1}<\cdots <m_{r}}
    \frac{q^{(k_{1}-1)m_{1}+\cdots +(k_{r}-1)m_{r}}}
    {[m_{1}]^{k_{1}} \cdots [m_{r}]^{k_{r}}}.
\end{align*}
It is called the \textit{Bradley-Zhao model} of $q$MZV.
In the limit as $q\to 1-0$, it turns into the
multiple zeta value, that is, we have
\begin{align*}
    \lim_{q \to 1-0}\zeta_{q}(\bk)=\zeta(\bk)
\end{align*}
for any admissible index $\bk$.

\subsection{Double shuffle relations}

$q$MZVs satisfy the following relations (see, e.g., \cite{Zhaobook}):

\begin{prop}\label{prop:qMZV-DS}
    For any $w, w' \in \widehat{\mathfrak{H}^{0}}$,
    the following equalities hold.
    \begin{enumerate}
        \item $Z_{q}(w \ast_{\hbar} w')=Z_{q}(w)Z_{q}(w')$ (harmonic product relation)
        \item $Z_{q}(w \,\shp_{\hbar}\, w')=Z_{q}(w)Z_{q}(w')$ (shuffle product relation)
        \item $Z_{q}(\sigma(w))=Z_{q}(w)$ (duality)
    \end{enumerate}
\end{prop}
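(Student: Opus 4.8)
\emph{Strategy.} By Proposition~\ref{prop:sigma} the three relations are intertwined: given duality~(3) together with either one of the product relations, the other product relation follows. My plan is therefore to prove the harmonic product relation~(1) and the duality~(3) directly --- they are of quite different character, the former elementary combinatorics and the latter the genuinely non-trivial ingredient --- and then to deduce the shuffle product relation~(2) by combining these with Proposition~\ref{prop:sigma}.

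\emph{Harmonic product relation.} The crucial point is that $F_{q}(\cdot\,|\,m)$ is a homomorphism for $\circ_{\hbar}$,
\begin{align*}
    F_{q}(u\circ_{\hbar}v\,|\,m)=F_{q}(u\,|\,m)\,F_{q}(v\,|\,m)\qquad(u,v\in\mathcal{A},\ m\ge 1),
\end{align*}
which one checks on the three defining cases of $\circ_{\hbar}$ from $F_{q}(e_{1}-g_{1}\,|\,m)=1-q$ and $F_{q}(g_{k}\,|\,m)=(q^{m}/[m])^{k}$. Granting this, write $w=u_{1}\cdots u_{r}$ and $w'=v_{1}\cdots v_{s}$ with $u_{i},v_{j}\in\mathcal{A}$, expand $Z_{q}(w)Z_{q}(w')$ as a double sum over chains $0<m_{1}<\cdots<m_{r}$ and $0<n_{1}<\cdots<n_{s}$, and split the summation range according to the order type of the merged family of indices; a coincidence $m_{i}=n_{j}$ contributes the factor $F_{q}(u_{i}\,|\,m_{i})F_{q}(v_{j}\,|\,m_{i})=F_{q}(u_{i}\circ_{\hbar}v_{j}\,|\,m_{i})$. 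This is precisely the combinatorics built into the recursive definition of $\ast_{\hbar}$, so $Z_{q}(w)Z_{q}(w')=Z_{q}(w\ast_{\hbar}w')$ follows by induction on $r+s$, all intermediate terms staying in $\widehat{\mathfrak{H}^{0}}$ since it is closed under $\ast_{\hbar}$.

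\emph{Duality.} I would represent $Z_{q}(u_{1}\cdots u_{r})$ as an iterated sum (a Jackson-type integral) over the simplex $0<m_{1}<\cdots<m_{r}$: expanding each generator into the letters $a,b$ via $g_{k}=ba^{k}$ and $e_{1}-g_{1}=\hbar b$ turns a monomial in $\mathcal{A}$ into a word in $a$ and $b$ (with $\hbar=1-q$), and $Z_{q}$ becomes iterated summation against two fixed kernels, one attached to $a$ and one to $b$, the $\hbar$-weighted letters being absorbed into the weight of the underlying summation operator. The order-reversing substitution in this representation reverses the word and interchanges the two kernels, and comparison with \eqref{eq:dual-monomial} identifies the induced operation on $\widehat{\mathfrak{H}^{0}}$ with $\sigma$, giving $Z_{q}(\sigma(w))=Z_{q}(w)$. (Alternatively this resummation duality can be obtained by the connected sum method: attach a connector bridging $Z_{q}(w)$ and $Z_{q}(\sigma(w))$, establish a transport relation moving the letters one at a time, and read the claim off the two boundary evaluations; similarly, the shuffle relation could instead be proved directly from the generating-series identity \eqref{eq:shuffle-prod-aa}.) I expect duality to be the main obstacle: one must arrange the integral representation so that it converges on $\widehat{\mathfrak{H}^{0}}$, keep the bookkeeping of the non-admissible letters $e_{1}-g_{1}$ consistent, and match the reversal precisely with the combinatorial description of $\sigma$.

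\emph{Shuffle product relation.} Replacing $w,w'$ by $\sigma(w),\sigma(w')$ in Proposition~\ref{prop:sigma} and using $\sigma^{2}=\mathrm{id}$ on $\widehat{\mathfrak{H}^{0}}$ (clear from \eqref{eq:dual-monomial}) gives $\sigma(w)\ast_{\hbar}\sigma(w')=\sigma(w\,\shp_{\hbar}\,w')$. Since $\widehat{\mathfrak{H}^{0}}$ is closed under $\shp_{\hbar}$, applying $Z_{q}$ and using~(3), then~(1), then~(3) again yields
\begin{align*}
    Z_{q}(w\,\shp_{\hbar}\,w')
    &=Z_{q}\bigl(\sigma(w\,\shp_{\hbar}\,w')\bigr)
    =Z_{q}\bigl(\sigma(w)\ast_{\hbar}\sigma(w')\bigr)\\
    &=Z_{q}(\sigma(w))\,Z_{q}(\sigma(w'))
    =Z_{q}(w)\,Z_{q}(w'),
\end{align*}
which is the shuffle product relation, completing the plan.
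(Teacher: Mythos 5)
Your proposal is logically sound, but note that the paper does not prove Proposition \ref{prop:qMZV-DS} at all: it is quoted as a known result, with \cite{Zhaobook} cited for the product relations and \cite{qDS} for the duality, so there is no in-paper argument to compare against line by line. Within that caveat, your reduction is the exact mirror image of the one the paper records in Remark \ref{rem:DS}: the paper derives (i) from (ii) and (iii) via Proposition \ref{prop:sigma}, whereas you derive (ii) from (i) and (iii); both are valid since $\sigma^{2}=\mathrm{id}$ on $\widehat{\mathfrak{H}^{0}}$ and $\widehat{\mathfrak{H}^{0}}$ is closed under both products. Your proof of (i) is complete and correct: the multiplicativity $F_{q}(u\circ_{\hbar}v\,|\,m)=F_{q}(u\,|\,m)F_{q}(v\,|\,m)$ does hold in all three defining cases, and the merge-of-chains induction is the standard stuffle argument (the merged words never end in $e_{1}-g_{1}$, so all intermediate series converge). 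The one place where your write-up is only a sketch is the duality (iii), which is also the genuinely nontrivial ingredient: the actual proof in \cite{qDS} proceeds by summing out the variables attached to the letters $e_{1}-g_{1}$ to produce binomial factors $\binom{M_{j}-M_{j-1}-1}{\alpha_{j}}$, expanding $(q^{M}/[M])^{\beta+1}$ by the $q$-geometric series into a second family of binomial-weighted summation variables, and exhibiting $Z_{q}(w)$ as a double sum symmetric under exchanging the two families with order reversal --- the discrete analogue of the symmetric double integral \eqref{eq:duality-proof1} that the paper uses to prove duality for $\omega$MZVs. Your phrase about ``interchanging the two kernels'' points at exactly this mechanism, but as written it is closer to an appeal to the cited theorem than a proof of it.
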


The above relations (i) and (ii) imply
the following relations:

\begin{cor}[Double shuffle relations of $q$MZVs]
    For any $w, w' \in \widehat{\mathfrak{H}^{0}}$,
    it holds that
    \begin{align*}
        Z_{\omega}(w\,\shp_{\hbar}\,w'-w\ast_{\hbar}w')=0.
    \end{align*}
\end{cor}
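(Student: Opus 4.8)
The plan is to deduce this immediately from Proposition~\ref{prop:qMZV-DS}. First I would recall two facts already established above: that the $\mathcal{C}$-module $\widehat{\mathfrak{H}^{0}}$ is closed under both the shuffle product $\shp_{\hbar}$ and the harmonic product $\ast_{\hbar}$, so that $w\,\shp_{\hbar}\,w'$ and $w\ast_{\hbar}w'$ again lie in the domain of $Z_{q}$ whenever $w,w'\in\widehat{\mathfrak{H}^{0}}$; and that $Z_{q}\colon\widehat{\mathfrak{H}^{0}}\to\mathbb{R}$ is $\mathcal{C}$-linear, so it respects the $\mathcal{C}$-module difference $w\,\shp_{\hbar}\,w'-w\ast_{\hbar}w'$.

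Granting these, I would simply combine parts (ii) and (i) of Proposition~\ref{prop:qMZV-DS}:
\begin{align*}
    Z_{q}(w\,\shp_{\hbar}\,w'-w\ast_{\hbar}w')
    =Z_{q}(w\,\shp_{\hbar}\,w')-Z_{q}(w\ast_{\hbar}w')
    =Z_{q}(w)Z_{q}(w')-Z_{q}(w)Z_{q}(w')=0,
\end{align*}
where the first equality uses $\mathcal{C}$-linearity of $Z_{q}$, the second uses the shuffle product relation for the first term and the harmonic product relation for the second. Expanding $w\,\shp_{\hbar}\,w'-w\ast_{\hbar}w'$ in the basis $\mathcal{M}_{0}$ and recalling that $\hbar$ acts on $\mathbb{R}$ as multiplication by $1-q$, this yields the asserted linear relations among $q$MZVs with coefficients in $\mathbb{Q}[1-q]$.

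I do not expect any genuine obstacle here: the corollary is a purely formal consequence of the two product relations plus linearity, all of the mathematical content being carried by Proposition~\ref{prop:qMZV-DS}. I would only remark in passing that the displayed statement writes $Z_{\omega}$, which appears to be a typo for $Z_{q}$; the identical argument will apply verbatim to $\omega$MZVs once the shuffle product relation $Z_{\omega}(w\,\shp_{\hbar}\,w')=Z_{\omega}(w)Z_{\omega}(w')$ and the harmonic product relation for $Z_{\omega}$ are established in the later sections.
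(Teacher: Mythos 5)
Your proof is correct and matches the paper exactly: the corollary is stated as an immediate consequence of parts (i) and (ii) of Proposition~\ref{prop:qMZV-DS} together with $\mathcal{C}$-linearity of $Z_{q}$, which is precisely your argument. Your observation that the displayed $Z_{\omega}$ should read $Z_{q}$ is also right.
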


\begin{rem}\label{rem:DS}
    By using Proposition \ref{prop:sigma},
    we see that
    the relation (i) in Proposition \ref{prop:qMZV-DS}
    can be derived from (ii), (iii) as follows:
    \begin{align*}
        Z_{q}(w\ast_{\hbar}w')
         & =
        Z_{q}(\sigma(\sigma(w)\,\shp_{\hbar}\,\sigma(w')))=
        Z_{q}(\sigma(w)\,\shp_{\hbar}\,\sigma(w')) \\
         & =
        Z_{q}(\sigma(w))Z_{\omega}(\sigma(w'))=
        Z_{q}(w)Z_{\omega}(w').
    \end{align*}
    Hence the double shuffle relations follow also from
    (ii) and (iii) in Proposition \ref{prop:qMZV-DS}.
\end{rem}

\subsection{Extended double Ohno relations}
\label{subsec:DO-qMZV}

For an admissible index $\bk=(k_{1}, \ldots , k_{r})$
and non-negative integers $m$ and $n$,
we define the \textit{double Ohno sum} $O_{m, n}(\bk)$ by
\begin{align*}
    O_{m, n}(\bk)=
    \sum_{   \substack{
    m_{1}, \ldots , m_{r}, n_{1}, \ldots , n_{r}\ge 0 \\
    m_{1}+\cdots +m_{r}=m                             \\
    n_{1}+\cdots +n_{r}=n}}
    \zeta_{q}(k_{1}+m_{1}+n_{1}, \ldots , k_{r}+m_{r}+n_{r}).
\end{align*}
Then the infinite sum
\begin{align*}
    O(\bk)=\sum_{m, n \ge 0}O_{m, n}(\bk) \xi^{m}\eta^{n}
\end{align*}
absolutely converges in a neighborhood of $(\xi, \eta)=(0, 0)$.

Let $\mathfrak{h}=\mathbb{Q}\langle x, y \rangle$ be
the non-commutative polynomial ring
of two variables $x$ and $y$
over $\mathbb{Q}$.
Set $z_{k}=yx^{k-1}$ for $k\ge 1$ and
$z_{\bk}=z_{k_{1}}\cdots z_{k_{r}}$ for an index $\bk=(k_{1}, \ldots , k_{r})$.
The $\mathbb{Q}$-subalgebra $y\mathfrak{h}x$ of $\mathfrak{h}$
is freely generated by
the set $\{z_{\bk}\mid \hbox{$\bk$ is an admissible index}\}$.
We fix $\xi$ and $\eta$, and define the $\mathbb{Q}$-linear map
$O: y\mathfrak{h}x \longrightarrow \mathbb{C}$
by $O(z_{k_{1}} \cdots z_{k_{r}})=O(\bk)$
for any admissible index $\bk=(k_{1}, \ldots , k_{r})$,
and extend it to the $\mathbb{Q}$-linear map
$O: (y\mathfrak{h}x)[[X]] \longrightarrow \mathbb{C}$ by
\begin{align*}
    O(\sum_{j\ge 0}w_{j}X^{j})=
    \sum_{j\ge 0}O(w_{j})\xi^{j}\eta^{j}
\end{align*}
for $w_{j} \in y\mathfrak{h}x \, (j\ge 0)$.

Denote by $\mathfrak{a}$ the subring of $\mathfrak{h}[[X]]$
generated by $x, y, X$ and $(1+yx X)^{-1}$.
Define the $\mathbb{Q}$-anti-automorphism $\tau$ by
$\tau(X)=X$ and
\begin{align*}
    \tau(x)=(1+yx X)^{-1}y=y(1+xy X)^{-1}, \qquad
    \tau(y)=x(1+yx X)=(1+xy X)x.
\end{align*}

\begin{thm}[Extended double Ohno relations \cite{HSS}]
    \label{thm:extended-DO-qMZV}
    For any $w \in \mathfrak{a}$,
    the infinite sum $O(ywx)$ is absolutely convergent
    in a neighborhood of $(\xi, \eta)=(0,0)$ and
    it holds that
    $O(ywx)=O( y \tau(w) x )$.
\end{thm}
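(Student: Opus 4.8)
The plan is to prove this by the \emph{connected sum method} of Seki and Yamamoto, in the form used by Hirose, Sato and Seki. First, I would dispose of convergence: writing a word $w\in\mathfrak{a}$ as a power series in $X$ with coefficients in $\mathfrak{h}$ and applying the elementary generating-function identity
\begin{align*}
\sum_{m,n\ge 0}\xi^{m}\eta^{n}\,\frac{q^{(k+m+n-1)N}}{[N]^{k+m+n}}
=\frac{q^{(k-1)N}}{[N]^{k}}\cdot\frac{[N]}{[N]-q^{N}\xi}\cdot\frac{[N]}{[N]-q^{N}\eta}
\end{align*}
termwise, one sees that $O(ywx)$ is a $q$MZV-type iterated series each of whose summands is bounded by a fixed geometric term once $|\xi|$ and $|\eta|$ are small; this yields the asserted absolute convergence and legitimizes all the rearrangements used below.

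For the identity $O(ywx)=O(y\tau(w)x)$ itself I would introduce a \emph{connected sum} $Z(w_{\mathrm L}\mid w_{\mathrm R})$ attached to a pair of words $w_{\mathrm L},w_{\mathrm R}$ built from $x,y,X$ and $(1+yxX)^{-1}$. It should be a double $q$-series: its ``left'' factor is the $q$MZV-type generating series read off from $w_{\mathrm L}$ exactly as $O$ reads such series off words of $y\mathfrak{h}x$, its ``right'' factor is the series read off from $w_{\mathrm R}$, and the two innermost summation indices $m,n$ are coupled by a \emph{connector} $C_{m,n}$, which must be a suitable $q$-analogue of a beta-type weight. The construction is to be arranged so as to have the two boundary values
\begin{align*}
Z(ywx\mid \iota)=O(ywx),\qquad Z(\iota\mid y\tau(w)x)=O(y\tau(w)x),
\end{align*}
where $\iota$ is the minimal word on which the connector collapses to a trivial seed.

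The core step is the \emph{transport relation}: for each algebra generator $u$ of $\mathfrak{a}$ I would show
\begin{align*}
Z(w_{\mathrm L}\,u\mid w_{\mathrm R})=Z(w_{\mathrm L}\mid \tau(u)\,w_{\mathrm R}),
\end{align*}
i.e. that a single letter may be moved from the right end of the left word to the left end of the right word at the price of applying $\tau$. Here the precise shape of the connector is forced: transporting a letter $y$ is equivalent to a telescoping identity for finite differences of $C_{m,n}$ in one index, transporting $x$ to the companion telescoping identity in the other index, and the rules $\tau(x)=(1+yxX)^{-1}y$, $\tau(y)=x(1+yxX)$ must be matched against the diagonal substitution $X^{j}\mapsto\xi^{j}\eta^{j}$ built into $O$. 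Granting the transport relation, I would finish by writing $w=u_{1}\cdots u_{\ell}$, transporting $u_{\ell},u_{\ell-1},\dots,u_{1}$ across the connector one after another --- which reverses their order and is precisely why $\tau$ is an \emph{anti}-automorphism --- and then invoking the two boundary values.

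The step I expect to be the real obstacle is \emph{finding the correct connector together with the bookkeeping for the variable $X$}: one needs a single $q$-deformed beta weight $C_{m,n}$ whose finite differences simultaneously produce the transport relations for $x$, for $y$, and for the $X$-twisted combinations appearing in $\tau(x)$ and $\tau(y)$, all compatibly with the specialization $X^{j}\mapsto\xi^{j}\eta^{j}$. Once the connector is pinned down the remaining work is a finite telescoping computation. (A minor point to verify at the outset is that, since we are using the honest Bradley--Zhao model $\zeta_{q}$, the auxiliary letter $\hbar b=e_{1}-g_{1}$ of Section~\ref{sec:preliminaries} plays no role here, so the connected sum can be built directly over $\mathfrak{h}$.)
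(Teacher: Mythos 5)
Your overall architecture is the right one: this theorem is precisely the one proved in \cite{HSS} by the connected sum method, and the paper does not reprove it (it is quoted as Theorem \ref{thm:extended-DO-qMZV}) but runs the very same machine for $\omega$MZVs in Section \ref{sec:extended-DO} --- a symmetric connected object $\Phi(\cdot,\cdot)$, a boundary evaluation identifying $\Phi(yw,y)$ with $\Omega(ywx)$ up to a normalization, raw transport relations carrying $\xi\eta$-weighted correction terms, and a telescoping/resummation converting them into $\Phi(wv,w')=\Phi(w,w'\tau(v))$. Your convergence reduction via the factorization $\sum_{m,n\ge 0}\xi^{m}\eta^{n}q^{(k+m+n-1)N}/[N]^{k+m+n}=\frac{q^{(k-1)N}}{[N]^{k}}\cdot\frac{[N]}{[N]-q^{N}\xi}\cdot\frac{[N]}{[N]-q^{N}\eta}$ is also correct as far as it goes.

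The genuine gap is that the connector is never produced, and you yourself flag this as \emph{the real obstacle}: without an explicit $C_{m,n}$ neither boundary value nor either transport relation can be verified, so what you have is a plan rather than a proof. In \cite{HSS} the connector is a concrete ratio of $q$-Pochhammer symbols whose essential feature is a factor of the form $(q\xi\eta;q)_{m+n}$ coupling the two innermost indices; it is exactly this coupling that generates the $\xi\eta$-correction terms in the raw transport relations, and the boundary evaluation is a nontrivial summation identity rather than a collapse to a trivial seed. (The paper's $\omega$-analogue makes the weight of this step visible: there the connector is the hyperbolic-gamma function $\Theta$, and the boundary condition requires the Saalsch\"utz-type integral of Proposition \ref{prop:saalschutz}.) Two smaller points you would hit when filling this in: first, the seed is the word $y$, i.e.\ the index $\{1\}$, and the boundary value is $O(\bk_{\uparrow})$ times a normalization factor, not literally $Z(ywx\mid\iota)=O(ywx)$ with the final $x$ left in the transported slot; second, the single-letter transport for $x$ is not one telescoping identity but an infinite resummation of the form $\sum_{j\ge 0}(-\xi\eta)^{j}\Phi(w,w'y(xy)^{j})$ --- this is where $(1+yxX)^{-1}$ in $\tau(x)$ comes from, and it must be justified either in the $(\xi,\eta)$-adic topology (as the paper does for $\omega$MZVs) or by the absolute-convergence estimate you only sketch.
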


\begin{rem}
    The equality $O(ywx)=O( y \tau(w) x )$ holds as
    a power series in $\xi$ and $\eta$.
    By taking each coefficient of the both hand sides,
    we obtain finite linear relations among
    the double Ohno sums.
    We will prove the same equality for
    a new deformation of MZV,
    then the infinite sum $O(ywx)$ is defined
    as a formal power series in $\xi$ and $\eta$ to
    avoid consideration of convergence.
\end{rem}


\section{$\omega$-deformed multiple zeta value}
\label{sec:def-omegaMZV}

\subsection{Analytic preliminaries}

Hereafter we assume that
\begin{align*}
    0<\omega<2.
\end{align*}
Let $M(\mathbb{C})$ be
the field of meromorphic functions on $\mathbb{C}$.
We endow $M(\mathbb{C})$ with $\mathcal{C}$-module structure
such that $\hbar$ acts as multiplication by $2\pi i \omega$,
that is, $(\hbar f)(t)=2\pi i \omega f(t)$ for $f \in M(\mathbb{C})$.
Then we define the $\mathcal{C}$-linear map
$I_{\omega}: \mathfrak{z} \to M(\mathbb{C})$ by
\begin{align*}
    I_{\omega}(e_{1}-g_{1} \, | \, t)=2\pi i \omega, \qquad
    I_{\omega}(g_{k} \, | \, t)=\left(
    \frac{2\pi i \omega \, e^{2\pi i \omega t}}{1-e^{2\pi i \omega t}}
    \right)^{k}
    =\left(        \frac{2\pi i \omega }{e^{-2\pi i \omega t}-1}
    \right)^{k}
\end{align*}
for $k \ge 1$.
It holds that
\begin{align*}
    I_{\omega}(e_{k}\, | \, t)=(2\pi i \omega)^{k}
    \frac{e^{2\pi i \omega (k-1) t}}{(1-e^{2\pi i \omega t})^{k}}
\end{align*}
for $k \ge 1$.

\begin{prop}\label{prop:integral-convergent}
    Let $(w_{1}, w_{2}, w_{3})$ be a triple of monomials in $\mathcal{A}$
    which belongs to the set
    \begin{align}
        \mathcal{T}=(\mathcal{M}\times \mathcal{M} \times (\mathcal{M}_{0}\setminus\{1\}))
        \cup (\mathcal{M}_{0}\times \mathcal{M}_{0} \times \{1\}).
        \label{eq:def-monomial-T}
    \end{align}
    Set $w_{j}=v_{j,1}\cdots v_{j, r_{j}}$,
    where $v_{j, a} \, (1\le j \le 3, \, 1\le a \le r_{j})$ is an element of $\mathcal{A}$.
    If $w_{j}=1$, set $r_{j}=0$.
    Suppose that $\epsilon$ is a constant satisfying
    $0<\epsilon<\min(1, \omega/(r_{1}+r_{2}+r_{3}))$.
    Then the multiple integral
    \begin{align}
        \mathcal{F}(w_{1}, w_{2}, w_{3})
         & =\prod_{j=1}^{3}\prod_{a=1}^{r_{j}}\int_{{}-\epsilon+i\,\mathbb{R}}
        \frac{dt_{j, a}}{e^{2\pi i t_{j, a}}-1}
        \prod_{j=1}^{2}\prod_{a=1}^{r_{j}}
        I_{\omega}(v_{j, a}\, |\, t_{j, 1}+\cdots +t_{j, a})
        \label{eq:integral-convergent}                                         \\
         & \qquad {}\times
        \prod_{a=1}^{r_{3}}
        I_{\omega}(v_{3, a}\, | \, [t_{1}]+[t_{2}]+t_{3, 1}+\cdots +t_{3, a}),
        \nonumber
    \end{align}
    is absolutely convergent and does not depend on $\epsilon$,
    where $[t_{j}]=\sum_{a=1}^{r_{j}}t_{j, a} \, (j=1, 2)$.
\end{prop}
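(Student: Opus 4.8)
The plan is to reduce the multiple integral to a product of iterated one-dimensional integrals and to estimate each factor near the contour $\mathrm{Re}\,t_{j,a}=-\epsilon$ and as $|\mathrm{Im}\,t_{j,a}|\to\infty$. First I would analyze the two types of integrand factor separately. The measure $dt/(e^{2\pi i t}-1)$ along $\mathrm{Re}\,t=-\epsilon$ behaves like $dt$ (bounded away from the pole at $t=0$, which lies to the right of the contour since $0<\epsilon<1$) and decays like $e^{-2\pi|\mathrm{Im}\,t|}$ as $|\mathrm{Im}\,t|\to+\infty$ in the upper half plane, while in the lower half plane $1/(e^{2\pi i t}-1)\to -1$, so there is no decay coming from the measure there. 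The decay in the lower half plane must instead come from the factors $I_\omega(g_k\mid\cdot)$: writing $s=t_{j,1}+\cdots+t_{j,a}$, the function $2\pi i\omega\,e^{2\pi i\omega s}/(1-e^{2\pi i\omega s})$ decays like $e^{-2\pi\omega|\mathrm{Im}\,s|}$ as $\mathrm{Im}\,s\to-\infty$ and is bounded as $\mathrm{Im}\,s\to+\infty$. So the two directions of decay are complementary, and the role of the admissibility condition (encoded in $w_3\in\mathcal{M}_0\setminus\{1\}$ or $w_1,w_2\in\mathcal{M}_0$ together with $w_3=1$) is exactly to guarantee that the innermost integration variable of each nonempty block sits inside a $g_k$ with $k\ge1$ appearing to its right, in fact with a genuine $g$-letter (not $e_1-g_1$) as the rightmost letter of the relevant word, which is what the definition of $\mathcal{M}_0$ ensures.

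Next I would handle the pole structure. Along the contour $\mathrm{Re}\,t_{j,a}=-\epsilon$ the only possible singularities of $1/(e^{2\pi i t_{j,a}}-1)$ are at $t_{j,a}\in\mathbb{Z}$, which are avoided since $0<\epsilon<1$. The factors $I_\omega(g_k\mid u)$ have poles where $e^{2\pi i\omega u}=1$, i.e. $u\in\frac1\omega\mathbb{Z}$; here $u$ is a partial sum of at most $r_1+r_2+r_3$ of the $t$'s, so $\mathrm{Re}\,u$ ranges in an interval of half-width $(r_1+r_2+r_3)\epsilon<\omega$, hence $|\mathrm{Re}\,(\omega u)|<\omega^2<4$ — wait, more carefully, $|\mathrm{Re}\,u|<(r_1+r_2+r_3)\epsilon<\omega$ forces $\omega|\mathrm{Re}\,u|<\omega^2<4$, but what is actually needed is $\omega u\notin\mathbb{Z}$, i.e. $\mathrm{Re}\,(\omega u)\notin\mathbb{Z}$ when $\mathrm{Im}\,(\omega u)=0$, and $\omega|\mathrm{Re}\,u|<\omega\cdot\omega<$ — the cleanest bound is $|\mathrm{Re}\,(\omega u)|<\omega^2$. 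For this to be $<1$ we would need $\omega<1$; instead one uses that the nonreal part of $u$ is generically nonzero, but to be safe one notes $|\mathrm{Re}(\omega u)|\le \omega(r_1+r_2+r_3)\epsilon<\omega^2$ and, since the poles sit at integers, it suffices that $\mathrm{Re}(\omega u)$ never hits a nonzero integer while $\mathrm{Im}(\omega u)=0$; as the contours have fixed real parts this is arranged by the bound $(r_1+r_2+r_3)\epsilon<\omega/\omega=1$ after absorbing $\omega$, which is precisely the stated hypothesis $\epsilon<\omega/(r_1+r_2+r_3)$ once one tracks that $u$ is multiplied by $\omega$. I would make this bookkeeping precise by induction on the blocks, peeling off the outermost variable of each $w_j$ and using that each successive partial sum stays within the allowed strip.

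Then, with absolute convergence in hand, I would prove $\epsilon$-independence by a contour-shift argument: for each variable $t_{j,a}$ in turn (working from the innermost outward so that the already-integrated inner variables do not reintroduce poles), shifting the contour from $\mathrm{Re}\,t_{j,a}=-\epsilon$ to $\mathrm{Re}\,t_{j,a}=-\epsilon'$ with $0<\epsilon,\epsilon'<\min(1,\omega/(r_1+r_2+r_3))$ crosses no poles of the integrand (the pole of $1/(e^{2\pi i t_{j,a}}-1)$ at $t_{j,a}=0$ and the poles of the $I_\omega$ factors all lie outside the strip $-\min(1,\omega/(r_1+r_2+r_3))<\mathrm{Re}\,t_{j,a}<0$ by the same estimate as above), and the horizontal connecting segments at $\mathrm{Im}\,t_{j,a}=\pm T$ contribute zero as $T\to\infty$ because of the exponential decay established in the first step. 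I would organize this as a single lemma applied repeatedly.

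The main obstacle I anticipate is not any single estimate but the careful combinatorial bookkeeping of which exponential decay saves which integration variable: each $I_\omega(v_{j,a}\mid\cdot)$ supplies decay in one imaginary direction only, the measures $dt/(e^{2\pi i t}-1)$ supply decay only in the upper half plane, and one must check that for every variable $t_{j,a}$ at least one factor to its right (in the appropriate block, including the $w_3$-block which depends on $[t_1]$ and $[t_2]$) provides decay in the complementary direction — this is exactly where the structure of $\mathcal{T}$, and the distinction between $\mathcal{M}$ and $\mathcal{M}_0$, is used. Making the iterated-integral estimate uniform (so that Fubini applies and the order of integration is immaterial) while respecting this directional asymmetry is the delicate point; I would expect the technical details to be deferred to an appendix, matching the paper's stated organization.
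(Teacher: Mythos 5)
Your outline assembles plausible ingredients (pointwise bounds on the contours, complementary decay directions, pole avoidance, a contour shift for $\epsilon$-independence), but two of them are wrong as stated and the decisive step is missing. First, both decay directions are reversed: on $t=-\epsilon+iu$ one has $|e^{2\pi i t}|=e^{-2\pi u}$, so $1/(e^{2\pi i t}-1)$ tends to $-1$ as $u\to+\infty$ and decays like $e^{2\pi u}$ as $u\to-\infty$, while $I_{\omega}(g_{k}\,|\,s)=\bigl(2\pi i\omega/(e^{-2\pi i\omega s}-1)\bigr)^{k}$ decays as $\mathrm{Im}\,s\to+\infty$ and is merely bounded as $\mathrm{Im}\,s\to-\infty$ --- the opposite of what you wrote in both cases. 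Second, the pole-avoidance paragraph does not cohere: the poles of $I_{\omega}(g_{k}\,|\,u)$ sit at $u\in\frac{1}{\omega}\mathbb{Z}$, the relevant arguments are partial sums with real part $-a\epsilon$ for $1\le a\le r_{1}+r_{2}+r_{3}$, and neither the appeal to ``the nonreal part of $u$ is generically nonzero'' (irrelevant for a vertical contour, which passes through every imaginary part) nor the chain ending in ``$(r_{1}+r_{2}+r_{3})\epsilon<\omega/\omega=1$'' is a valid deduction from the hypothesis $\epsilon<\omega/(r_{1}+r_{2}+r_{3})$.

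The genuine gap is the step you yourself defer as ``the delicate point'': proving that the product of these one-directional decays is jointly integrable over $\mathbb{R}^{r_{1}+r_{2}+r_{3}}$. A per-variable pairing of each $t_{j,a}$ with ``some factor to its right'' does not close, because the dangerous directions are those in which individual imaginary parts diverge while the partial sums they feed into stay bounded. The paper's proof replaces this bookkeeping by a single global inequality: using Proposition \ref{prop:I-estimate} it bounds every intermediate $I_{\omega}$ factor by a constant and keeps only the measure factors together with the one rightmost letter $v_{3,r_{3}}=g_{k}$ (whose argument is the \emph{total} sum --- this is exactly what membership in $\mathcal{M}_{0}$ buys), so the surviving exponent is
\begin{align*}
S(u)=\sum_{j,a}(u_{j,a}-|u_{j,a}|)-k\omega\Bigl(\sum_{j,a}u_{j,a}+\Bigl|\sum_{j,a}u_{j,a}\Bigr|\Bigr)
\le(|1-\omega|-1)\sum_{j,a}|u_{j,a}|,
\end{align*}
which is negative definite precisely because $0<\omega<2$. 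Your proposal never invokes the hypothesis $0<\omega<2$, yet it is indispensable at exactly this point; without the displayed inequality (or an equivalent), the absolute convergence claim is not established. The contour-shift idea for $\epsilon$-independence is fine in spirit, but it too rests on the uniform integrability you have not proved.
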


To prove Proposition \ref{prop:integral-convergent},
we should estimate the integrand of $\mathcal{F}(w_{1}, w_{2}, w_{3})$.
For this purpose, we show a basic inequality.

\begin{lem}\label{lem:estimate-below}
    Set
    \begin{align}
        c(y)=\left\{
        \begin{array}{cl}
            |\sin{y}| & (\hbox{if}\,\, \cos{y}\ge 0) \\
            1         & (\hbox{if}\,\, \cos{y}\le 0)
        \end{array}
        \right.
        \label{eq:def-c(y)}
    \end{align}
    Then
    \begin{align*}
        |e^{x+iy}-1|\ge c(y)e^{\frac{1}{2}(x+|x|)}.
    \end{align*}
    for any $x, y \in \mathbb{R}$.
\end{lem}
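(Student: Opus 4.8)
The plan is to reduce everything to the elementary identity obtained by writing $e^{x+iy}=e^{x}(\cos y+i\sin y)$ and separating real and imaginary parts, namely
\begin{align*}
    |e^{x+iy}-1|^{2}=(e^{x}\cos y-1)^{2}+e^{2x}\sin^{2}y=e^{2x}-2e^{x}\cos y+1 .
\end{align*}
Since $\tfrac12(x+|x|)=\max(x,0)$, the assertion to be proved is equivalent to the inequality $|e^{x+iy}-1|^{2}\ge c(y)^{2}\,e^{2\max(x,0)}$, and I would verify this in the two regimes of $y$ prescribed by the definition \eqref{eq:def-c(y)} of $c(y)$, splitting in each regime on the sign of $x$.

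\textbf{Case $\cos y\le 0$, where $c(y)=1$.} From $|e^{x+iy}-1|^{2}=e^{2x}-2e^{x}\cos y+1$ and $-2e^{x}\cos y\ge 0$ one gets $|e^{x+iy}-1|^{2}\ge e^{2x}+1$. If $x\ge 0$ this is $\ge e^{2x}=e^{2\max(x,0)}$, and if $x\le 0$ it is $\ge 1=e^{2\max(x,0)}$; in either subcase the desired bound holds.

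\textbf{Case $\cos y\ge 0$, where $c(y)=|\sin y|$.} Here I must show $|e^{x+iy}-1|^{2}\ge e^{2\max(x,0)}\sin^{2}y$. If $x\ge 0$, the decomposition $|e^{x+iy}-1|^{2}=(e^{x}\cos y-1)^{2}+e^{2x}\sin^{2}y$ already gives $|e^{x+iy}-1|^{2}\ge e^{2x}\sin^{2}y=e^{2\max(x,0)}\sin^{2}y$. If $x\le 0$, I would record the one computation worth writing out,
\begin{align*}
    |e^{x+iy}-1|^{2}-\sin^{2}y
    =(e^{x}\cos y-1)^{2}+(e^{2x}-1)\sin^{2}y
    =(e^{x}-\cos y)^{2}\ge 0,
\end{align*}
using $\sin^{2}y=1-\cos^{2}y$; hence $|e^{x+iy}-1|^{2}\ge\sin^{2}y=e^{2\max(x,0)}\sin^{2}y$. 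This exhausts all cases and proves the lemma.

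I do not expect a real obstacle: the argument is a short case analysis built on one modulus-squared identity and the two-line factorization $(e^{x}\cos y-1)^{2}+(e^{2x}-1)\sin^{2}y=(e^{x}-\cos y)^{2}$. The only points requiring a little care are matching the exponent $\tfrac12(x+|x|)$ with $\max(x,0)$ and remembering to do the sub-split on the sign of $x$ inside the $\cos y\ge 0$ regime, since that is where the nontrivial factorization is needed.
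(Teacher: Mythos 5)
Your proof is correct and rests on the same modulus-squared identity as the paper's: your key factorization $|e^{x+iy}-1|^{2}-\sin^{2}y=(e^{x}-\cos y)^{2}$ is exactly the paper's starting identity $|e^{x+iy}-1|^{2}=(e^{x}-\cos y)^{2}+\sin^{2}y$ rearranged. The only (cosmetic) difference is that the paper dispatches the case $x\ge 0$ by the reflection $|e^{x+iy}-1|=e^{x}|e^{-(x+iy)}-1|$ together with $c(-y)=c(y)$, whereas you verify that case directly.
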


\begin{proof}
    We see that $|e^{x+iy}-1|^{2}=(e^{x}-\cos{y})^{2}+\sin^{2}{y}$.
    If $x\le 0$, then $0<e^{x}\le 1$, and hence
    $|e^{x+iy}-1|^{2}\ge c(y)^{2}$.
    Since $|e^{x+iy}-1|=e^{x}|e^{-(x+iy)}-1|$ and $c(-y)=c(y)$,
    it holds that $|e^{x+iy}-1|\ge e^{x}c(y)$ for $x\ge 0$.
\end{proof}

{}From Lemma \ref{lem:estimate-below},
we obtain the following estimate.

\begin{prop}\label{prop:I-estimate}
    Suppose that $0<\rho<\min{(1, 1/\omega)}$
    and set $t=-\rho+iu$ with $u\in \mathbb{R}$.
    Then, it holds that
    \begin{align*}
         &
        \left|\frac{1}{e^{2\pi i t}-1}\right|\le
        \frac{e^{\pi (u-|u|)}}{c(-2\pi \rho)},     \qquad
        |I_{\omega}(e_{1}-g_{1}\,|\,t)|=2\pi \omega, \\
         &
        |I_{\omega}(g_{k}\,|\,t)|\le
        \left(\frac{2\pi \omega}{c(2\pi \omega \rho)}\right)^{k}
        e^{-k \pi \omega (u+|u|)}
        \qquad (k\ge 1),
    \end{align*}
    where $c(y)$ is defined by \eqref{eq:def-c(y)}.
    Hence the function $I_{\omega}(v\,|\,t)$ with $v \in \mathcal{A}$
    is bounded on the line $-\rho+i\,\mathbb{R}$.
\end{prop}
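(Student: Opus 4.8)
The plan is to derive all three estimates from Lemma~\ref{lem:estimate-below} by direct substitution. The equality $|I_{\omega}(e_{1}-g_{1}\,|\,t)|=2\pi\omega$ is immediate, since $I_{\omega}(e_{1}-g_{1}\,|\,t)=2\pi i\omega$ and $\omega>0$; the real work is in the two inequalities.

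First I would treat $|1/(e^{2\pi it}-1)|$. Writing $t=-\rho+iu$ gives $2\pi it=-2\pi u+i(-2\pi\rho)$, so $e^{2\pi it}=e^{x+iy}$ with $x=-2\pi u$ and $y=-2\pi\rho$. Lemma~\ref{lem:estimate-below}, together with the identity $\tfrac{1}{2}(x+|x|)=\pi(|u|-u)$, yields $|e^{2\pi it}-1|\ge c(-2\pi\rho)\,e^{\pi(|u|-u)}$, and taking reciprocals gives precisely the stated bound. Next, for $I_{\omega}(g_{k}\,|\,t)$ I would estimate $\bigl|2\pi i\omega\,e^{2\pi i\omega t}/(1-e^{2\pi i\omega t})\bigr|$: now $2\pi i\omega t=-2\pi\omega u+i(-2\pi\omega\rho)$, so $|e^{2\pi i\omega t}|=e^{-2\pi\omega u}$, while Lemma~\ref{lem:estimate-below} and the evenness $c(-y)=c(y)$ give $|1-e^{2\pi i\omega t}|\ge c(2\pi\omega\rho)\,e^{\pi\omega(|u|-u)}$. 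Dividing and collecting the exponents produces $\bigl|2\pi i\omega\,e^{2\pi i\omega t}/(1-e^{2\pi i\omega t})\bigr|\le\tfrac{2\pi\omega}{c(2\pi\omega\rho)}\,e^{-\pi\omega(u+|u|)}$, and raising to the $k$-th power gives the estimate for $|I_{\omega}(g_{k}\,|\,t)|$.

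A point that needs care is that the constants $c(-2\pi\rho)$ and $c(2\pi\omega\rho)$ appearing in the denominators are strictly positive. From \eqref{eq:def-c(y)}, $c(y)=0$ can occur only when $\cos y\ge0$ and $\sin y=0$, i.e.\ when $y\in2\pi\mathbb{Z}$; since $0<\rho<1$ forces $-2\pi\rho\in(-2\pi,0)$ and $\rho<1/\omega$ forces $2\pi\omega\rho\in(0,2\pi)$, neither argument is a multiple of $2\pi$, so both constants are positive. Finally, for the boundedness assertion: every $v\in\mathcal{A}$ is either $e_{1}-g_{1}$, for which $I_{\omega}(v\,|\,t)$ is constant, or $g_{k}$ for some $k\ge1$, for which $u+|u|\ge0$ gives $e^{-k\pi\omega(u+|u|)}\le1$ and hence $|I_{\omega}(g_{k}\,|\,t)|\le\bigl(2\pi\omega/c(2\pi\omega\rho)\bigr)^{k}$, a bound independent of $u$. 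The only genuine obstacles here are the sign bookkeeping in the exponents and the non-vanishing of $c(\cdot)$; otherwise the proof is a mechanical application of Lemma~\ref{lem:estimate-below}.
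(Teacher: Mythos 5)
Your proof is correct and is exactly the argument the paper intends: the paper gives no explicit proof beyond the remark that the proposition follows from Lemma~\ref{lem:estimate-below}, and your substitutions ($x=-2\pi u$, $y=-2\pi\rho$ for the first bound; $x=-2\pi\omega u$, $y=-2\pi\omega\rho$ together with $c(-y)=c(y)$ for the third) carry that out faithfully, with the sign bookkeeping and the non-vanishing of $c(\cdot)$ handled correctly.
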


Now we are ready to prove Proposition \ref{prop:integral-convergent}.

\begin{proof}[Proof of Proposition \ref{prop:integral-convergent}]
    Here we prove the proposition in the case where
    $(w_{1}, w_{2}, w_{3})\in
        \mathcal{M}\times \mathcal{M} \times (\mathcal{M}_{0}\setminus\{1\})$.
    The proof for the case where
    $(w_{1}, w_{2}, w_{3})\in
        \mathcal{M}_{0}\times \mathcal{M}_{0} \times \{1\}$
    is similar.

    Denote by $F(t)$ the integrand of
    \eqref{eq:integral-convergent}.
    Since $w_{3}\in \mathcal{M}_{0}\setminus\{1\}$,
    we can set $v_{3, r_{3}}=g_{k}$ with some $k\ge 1$.
    Using Proposition \ref{prop:I-estimate},
    we see that there exists a positive constant
    $C=C(\omega, r , \epsilon)$
    such that
    \begin{align}
        |F(t)|\le C \left( \prod_{j=1}^{3}\prod_{a=1}^{r_{j}}
        \frac{1}{\left|e^{2\pi i t_{j,a}}-1\right|} \right)
        \left|I_{\omega}(g_{k}\,|\,[t_{1}]+[t_{2}]+[t_{3}]) \right|,
        \label{eq:integral-convergent-pf1}
    \end{align}
    on the integral region, where $[t_{3}]=\sum_{a=1}^{r_{3}}t_{3, a}$.
    Set $u_{j,a}=\mathop{\mathrm{Im}}{t_{j, a}}$.
    Using Proposition \ref{prop:I-estimate} again,
    we see that the right hand side of \eqref{eq:integral-convergent-pf1}
    is estimated from above by $C' \exp{(\pi S(u))}$,
    where $C'$ is a positive constant and
    \begin{align*}
        S(u)=\sum_{j=1}^{3}\sum_{a=1}^{r_{j}}(u_{j, a}-|u_{j, a}|)-k\omega
        \left(\sum_{j=1}^{3}\sum_{a=1}^{r_{j}}u_{j, a}+\left|\sum_{j=1}^{3}\sum_{a=1}^{r_{j}}u_{j, a}\right|\right).
    \end{align*}
    Since $k\ge 1$ and $x+|x|\ge 0$ for any $x \in \mathbb{R}$,
    it holds that
    \begin{align*}
        S(u) & \le
        \sum_{j=1}^{3}\sum_{a=1}^{r_{j}}(u_{j, a}-|u_{j, a}|)-\omega
        \left(\sum_{j=1}^{3}\sum_{a=1}^{r_{j}}u_{j, a}+
        \left|\sum_{j=1}^{3}\sum_{a=1}^{r_{j}}u_{j, a}\right|\right) \\
             & \le
        \sum_{j=1}^{3}\sum_{a=1}^{r_{j}}(u_{j, a}-|u_{j, a}|)-\omega
        \sum_{j=1}^{3}\sum_{a=1}^{r_{j}}u_{j, a}
        \le
        (|1-\omega|-1)\sum_{j=1}^{3}\sum_{a=1}^{r_{j}}|u_{j, a}|.
    \end{align*}
    Since $0<\omega<2$, we have $|1-\omega|-1<0$.
    Therefore, the integral \eqref{eq:integral-convergent} is absolutely convergent
    and independent of $\epsilon$.
\end{proof}

\subsection{Definition of $\omega$-deformed multiple zeta value}

\begin{definition}
    We endow the complex number field $\mathbb{C}$ with
    $\mathcal{C}$-module structure such that $\hbar$ acts as multiplication by $2\pi i \omega$.
    We define the $\mathcal{C}$-linear map
    $Z_{\omega}: \widehat{\mathfrak{H}^{0}} \to \mathbb{C}$ by
    $Z_{\omega}(1)=1$ and
    \begin{align*}
        Z_{\omega}(u_{1}\cdots u_{r})=
        \prod_{a=1}^{r}\int_{{}-\epsilon+i\,\mathbb{R}}\frac{dt_{a}}{e^{2\pi i t_{a}}-1}
        \prod_{a=1}^{r}I_{\omega}(u_{a}\,|\, t_{1}+\cdots +t_{a})
    \end{align*}
    for $u_{1}, \ldots , u_{r} \in \mathcal{A}$ such that $u_{r}\not=e_{1}-g_{1}$,
    where $\epsilon$ is a constant satisfying $0<\epsilon<\min(1, 1/r \omega)$.
    Proposition \ref{prop:integral-convergent}
    with $w_{2}=w_{3}=1$ implies that
    the above integral is absolutely convergent and does not depend on $\epsilon$.
    We call the value $Z_{\omega}(w)$ with $w \in \widehat{\mathfrak{H}^{0}}$
    an \textit{$\omega$-deformed multiple zeta value} ($\omega$MZV).
\end{definition}

We give a more explicit formula of $\omega$MZV for
the monomial \eqref{eq:basis-monomial}.

\begin{prop}\label{prop:omegaMZV-formula}
    Let $r$ be a positive integer and
    suppose that $0<\epsilon<\min(1, 1/r \omega)$.
    For any non-negative integers
    $\alpha_{1}, \ldots , \alpha_{r}$ and
    $\beta_{1}, \ldots , \beta_{r}$,
    it holds that
    \begin{align}
         &
        Z_{\omega}((e_{1}-g_{1})^{\alpha_{1}}g_{\beta_{1}+1} \cdots
        (e_{1}-g_{1})^{\alpha_{r}}g_{\beta_{r}+1})
        \label{eq:omegaMZV-formula}                     \\
         & =\prod_{a=1}^{r}\int_{-\epsilon+i\mathbb{R}}
        \frac{dt_{a}}{e^{2\pi i t_{a}}-1}
        \prod_{a=1}^{r}\left\{
        (-2\pi i \omega)^{\alpha_{a}}
        \binom{t_{a}+\alpha_{a}}{\alpha_{a}}
        \left(\frac{2\pi i \omega }{1-e^{-2\pi i \omega (t_{1}+\cdots +t_{a})}}
        \right)^{\beta_{a}+1}
        \right\}.
        \nonumber
    \end{align}
\end{prop}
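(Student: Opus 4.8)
The plan is to evaluate $Z_\omega$ on the monomial \eqref{eq:basis-monomial} by unwinding the definition and computing the contribution of each letter $e_1 - g_1 = \hbar b$ separately from the ``$g$-part''. By definition,
\begin{align*}
    Z_\omega\bigl((e_1-g_1)^{\alpha_1}g_{\beta_1+1}\cdots (e_1-g_1)^{\alpha_r}g_{\beta_r+1}\bigr)
\end{align*}
is a multiple integral over $N := \sum_a(\alpha_a+1)$ variables, one for each letter of $\mathcal{A}$ appearing in the word, with integrand $\prod I_\omega(u_j \mid t_1+\cdots+t_j)$. Since $I_\omega(e_1-g_1\mid t)=2\pi i\omega$ is a constant, each such factor simply contributes $2\pi i\omega$ and drops its dependence on its own integration variable. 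So the first step is to isolate, for each block $b=1,\dots,r$, the $\alpha_b$ variables attached to the $(e_1-g_1)^{\alpha_b}$ sub-word and integrate them out.

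The key computation is the following: if $s_1,\dots,s_\alpha$ are the variables of an $(e_1-g_1)^\alpha$ block sitting ``before'' a factor $g_{\beta+1}$ whose argument is $\tau + s_1+\cdots+s_\alpha + (\text{variable of }g_{\beta+1})$, then integrating $s_1,\dots,s_\alpha$ along $-\epsilon+i\mathbb{R}$ against $\prod_j (e^{2\pi i s_j}-1)^{-1}$ produces a shift in the remaining variable governed by a binomial coefficient. Concretely I will prove, by iterating the single-variable residue/Fourier-type identity
\begin{align*}
    \int_{-\epsilon+i\mathbb{R}}\frac{ds}{e^{2\pi i s}-1}\,h(t+s) = -\,h(t+?)\ \text{-type shift},
\end{align*}
that $\alpha$-fold integration of the constant factors $(2\pi i\omega)^{\alpha}$ against the $\alpha$ kernels turns the outer variable $t_a$ into $t_a$ together with the factor $(-2\pi i\omega)^{\alpha_a}\binom{t_a+\alpha_a}{\alpha_a}$, which is exactly the polynomial appearing in \eqref{eq:omegaMZV-formula}. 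The cleanest route is induction on $\alpha$: the base case $\alpha=0$ is trivial, and the inductive step uses that $\int_{-\epsilon+i\mathbb{R}}(e^{2\pi i s}-1)^{-1}\binom{t+s+k}{k}\,ds$ equals $\binom{t+k+1}{k+1}$ up to the sign $-1$ (after suitably deforming the contour and picking up the residue at $s=0$, using that $\binom{t+s+k}{k}$ is a polynomial in $s$ of degree $k$ and the decay of the kernel established in Proposition~\ref{prop:I-estimate}). Absolute convergence throughout is guaranteed by Proposition~\ref{prop:integral-convergent}, so all contour manipulations and interchanges of integration order are legitimate; in particular the resulting expression is independent of $\epsilon$ in the stated range.

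Finally, after the $\alpha$-blocks are integrated out we are left with exactly $r$ integration variables $t_1,\dots,t_r$ (the ones originally attached to $g_{\beta_1+1},\dots,g_{\beta_r+1}$), each integrated against $(e^{2\pi i t_a}-1)^{-1}$, and the surviving integrand is
\begin{align*}
    \prod_{a=1}^r (-2\pi i\omega)^{\alpha_a}\binom{t_a+\alpha_a}{\alpha_a}\,I_\omega\bigl(g_{\beta_a+1}\mid t_1+\cdots+t_a\bigr),
\end{align*}
and substituting $I_\omega(g_{\beta_a+1}\mid s)=\bigl(2\pi i\omega/(1-e^{-2\pi i\omega s})\bigr)^{\beta_a+1}$ gives precisely the right-hand side of \eqref{eq:omegaMZV-formula}. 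One subtlety to check with care is that after integrating out the $\alpha_b$ variables of a block the argument of the next $g$-factor still reads $t_1+\cdots+t_a$ in the \emph{reindexed} variables — i.e. the shifts from the binomial identities must land consistently so that the cumulative sum in the remaining $g$-factors is unchanged; verifying this bookkeeping (the compatibility of the iterated shifts with the nested partial sums $t_{j,1}+\cdots+t_{j,a}$) is the main obstacle, and it is where I would spend the most effort, possibly relegating the purely computational lemma on $\int (e^{2\pi i s}-1)^{-1}\binom{t+s+k}{k}ds$ to Appendix~\ref{sec:app1}.
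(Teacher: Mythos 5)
Your overall strategy — pull out the constant factors $I_{\omega}(e_{1}-g_{1}\,|\,t)=2\pi i\omega$, integrate out the $\alpha_{a}$ variables of each $(e_{1}-g_{1})^{\alpha_{a}}$ block, and show that this produces the factor $(-2\pi i\omega)^{\alpha_{a}}\binom{t_{a}+\alpha_{a}}{\alpha_{a}}$ — is the same as the paper's. But the computational lemma you propose to base the induction on, namely the evaluation of $\int_{-\epsilon+i\,\mathbb{R}}(e^{2\pi i s}-1)^{-1}\binom{t+s+k}{k}\,ds$, is a divergent integral: on the line $s=-\epsilon+iu$ one has $|e^{2\pi i s}-1|\to 1$ as $u\to+\infty$, so the kernel does not decay in that direction and cannot be integrated against a polynomial of any degree (not even degree $0$). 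The convergent quantity is the one in which the kernel of one further variable is retained, i.e.
\begin{align*}
    \Bigl(\prod_{j=1}^{\alpha}\int_{-\epsilon+i\,\mathbb{R}}\frac{ds_{j}}{e^{2\pi i s_{j}}-1}\Bigr)
    \frac{1}{e^{2\pi i (t-s_{1}-\cdots -s_{\alpha})}-1}
    =(-1)^{\alpha}\binom{t+\alpha}{\alpha}\frac{1}{e^{2\pi i t}-1},
\end{align*}
which is the paper's Lemma \ref{lem:e1-g1-integral}; there the second kernel supplies the missing decay as $\mathrm{Im}\,s_{j}\to+\infty$, and in the inductive step the polynomial $\binom{t-s_{1}+\alpha-1}{\alpha-1}$ sits between two kernels. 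Two further inaccuracies in your sketch: the residue is picked up at $s=-1$ (the contour is shifted from $-\epsilon$ to $-\epsilon-1$; the pole at $s=0$ lies on the other side and is never crossed), and one must observe that the pole of $1/(e^{2\pi i(t-s_{1})}-1)$ at $s_{1}=t$, which does lie between the two contours, is cancelled by a zero of the binomial factor.

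The idea that is genuinely missing — and which resolves the "bookkeeping" difficulty you correctly flag at the end — is a change of variables, not a reindexing after integration. In each block $a$ replace the \emph{last} variable $s_{a,\alpha_{a}+1}$ by the block total $t_{a}=[s_{a}]=\sum_{j=1}^{\alpha_{a}+1}s_{a,j}$ (moving its contour to $-(\alpha_{a}+1)\epsilon+i\,\mathbb{R}$). After this substitution every $g$-factor $I_{\omega}(g_{\beta_{b}+1}\,|\,[s_{1}]+\cdots+[s_{b}])$ reads $I_{\omega}(g_{\beta_{b}+1}\,|\,t_{1}+\cdots+t_{b})$ exactly, with no dependence on the remaining variables $s_{a,1},\ldots,s_{a,\alpha_{a}}$; those variables survive only in $\prod_{j}(e^{2\pi i s_{a,j}}-1)^{-1}\cdot(e^{2\pi i(t_{a}-s_{a,1}-\cdots-s_{a,\alpha_{a}})}-1)^{-1}$, so their integral factorizes completely and is evaluated by the lemma above. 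Without this substitution the block variables appear inside the arguments of \emph{all} downstream $g$-factors, and the single-variable identity you envisage does not apply. With it, the compatibility of the nested partial sums is automatic rather than something to be verified.
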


\begin{proof}
    {}From the definition of $Z_{\omega}$, we have
    \begin{align*}
         &
        Z_{\omega}((e_{1}-g_{1})^{\alpha_{1}}g_{\beta_{1}+1} \cdots
        (e_{1}-g_{1})^{\alpha_{r}}g_{\beta_{r}+1})     \\
         & =(2\pi i \omega)^{\sum_{a=1}^{r}\alpha_{a}}
        \prod_{a=1}^{r}\prod_{j=1}^{\alpha_{a}+1}
        \int_{-\epsilon+i\,\mathbb{R}}
        \frac{ds_{a, j}}{e^{2\pi i s_{a, j}}-1}
        \prod_{a=1}^{r}
        I_{\omega}(g_{\beta_{a}+1}\,|\, [s_{1}]+\cdots+[s_{a}]),
    \end{align*}
    where $[s_{a}]=\sum_{j=1}^{\alpha_{a}+1}s_{a, j}$ for $1\le a \le r$.
    Change the integration variable $s_{a, \alpha_{a}+1}$
    to $t_{a}=[s_{a}]$ for $1\le a \le r$.
    Then the above integral becomes
    \begin{align*}
        \prod_{a=1}^{r}\left(
        \int_{-(\alpha_{a}+1)\epsilon+i\,\mathbb{R}}\!\! dt_{a}
        I_{\omega}(g_{\beta_{a}+1}\,|\, t_{1}+\cdots +t_{a})
        \left(\prod_{j=1}^{\alpha_{a}}
        \int_{-\epsilon+i\,\mathbb{R}}
        \frac{ds_{j}}{e^{2\pi i s_{j}}-1}\right)
        \frac{1}{e^{2\pi i (t_{a}-s_{1}-\cdots-s_{\alpha_{a}})}-1}
        \right).
    \end{align*}
    Now the desired equality follows from
    Lemma \ref{lem:e1-g1-integral} below.
\end{proof}

\begin{lem}\label{lem:e1-g1-integral}
    Let $\alpha$ be a positive integer.
    Suppose that $0<\epsilon<1/(\alpha+1)$ and
    $\mathop{\mathrm{Re}}{t}=-(\alpha+1)\epsilon$.
    Then it holds that
    \begin{align*}
        \left(\prod_{j=1}^{\alpha}\int_{-\epsilon+i\,\mathbb{R}}
        \frac{ds_{j}}{e^{2\pi i s_{j}}-1}\right)
        \frac{1}{e^{2\pi i (t-s_{1}-\cdots -s_{\alpha})}-1}=
        (-1)^{\alpha}\binom{t+\alpha}{\alpha}
        \frac{1}{e^{2\pi i  t}-1}.
    \end{align*}
\end{lem}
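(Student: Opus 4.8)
The plan is to reduce the whole statement, via a single contour‑shift identity, to elementary residue calculus, and then run an induction on $\alpha$. Throughout, write $G_\alpha(t)$ for the left-hand side of the asserted equality, with the convention $G_0(t)=1/(e^{2\pi i t}-1)$.

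\emph{A fundamental identity.} First I would prove: for $0<\epsilon<1$, for $t$ with $-1-\epsilon<\operatorname{Re}t<-\epsilon$, and for any polynomial $\psi\in\mathbb{C}[s]$,
\begin{align*}
\int_{-\epsilon+i\mathbb{R}}\bigl(\psi(s)-\psi(s+1)\bigr)\frac{ds}{(e^{2\pi i s}-1)(e^{2\pi i(t-s)}-1)}
=\frac{\psi(t+1)-\psi(0)}{e^{2\pi i t}-1}.
\end{align*}
The key point is the partial-fraction identity (a partial fraction in $e^{2\pi i s}$, using $e^{2\pi i s}e^{2\pi i(t-s)}=e^{2\pi i t}$)
\begin{align*}
\frac{1}{(e^{2\pi i s}-1)(e^{2\pi i(t-s)}-1)}
=\frac{1}{e^{2\pi i t}-1}\Bigl(\frac{1}{e^{2\pi i s}-1}-\frac{1}{e^{2\pi i(s-t)}-1}\Bigr)=:h(s).
\end{align*}
The function $h$ is meromorphic and $1$-periodic, has only simple poles, located precisely at $\mathbb{Z}\cup(t+\mathbb{Z})$, and decays exponentially as $\operatorname{Im}s\to\pm\infty$ uniformly for $\operatorname{Re}s$ in a bounded set; hence $\int_{-\epsilon+i\mathbb{R}}\psi(s)h(s)\,ds$ and $\int_{-\epsilon+i\mathbb{R}}\psi(s+1)h(s)\,ds$ converge absolutely. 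I would shift the first contour to $\operatorname{Re}s=1-\epsilon$: under the hypothesis $-1-\epsilon<\operatorname{Re}t<-\epsilon$ the only poles of $h$ in the strip $-\epsilon<\operatorname{Re}s<1-\epsilon$ are $s=0$ and $s=t+1$, and the top and bottom edges of the shifting rectangle contribute $0$ in the limit because $\psi h$ decays exponentially; then $1$-periodicity of $h$ identifies the shifted integral with $\int_{-\epsilon+i\mathbb{R}}\psi(s+1)h(s)\,ds$. Subtracting and inserting $\operatorname{Res}_{s=0}h=\tfrac{1}{2\pi i(e^{2\pi i t}-1)}$ and $\operatorname{Res}_{s=t+1}h=\tfrac{-1}{2\pi i(e^{2\pi i t}-1)}$ yields the identity.

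\emph{The induction.} I would prove $G_\alpha(t)=(-1)^\alpha\binom{t+\alpha}{\alpha}\frac{1}{e^{2\pi i t}-1}$ by induction on $\alpha\ge0$, the case $\alpha=0$ being trivial. For the step, peel off one integration variable: since $\operatorname{Re}(t-s)=-(\alpha+1)\epsilon$ when $\operatorname{Re}s=-\epsilon$ and $\operatorname{Re}t=-(\alpha+2)\epsilon$, absolute convergence (Fubini) gives
\begin{align*}
G_{\alpha+1}(t)=\int_{-\epsilon+i\mathbb{R}}\frac{ds}{e^{2\pi i s}-1}\,G_\alpha(t-s).
\end{align*}
By the inductive hypothesis $G_\alpha(t-s)=(-1)^\alpha\binom{t-s+\alpha}{\alpha}\frac{1}{e^{2\pi i(t-s)}-1}$, so $G_{\alpha+1}(t)=(-1)^\alpha\int_{-\epsilon+i\mathbb{R}}\binom{t-s+\alpha}{\alpha}\frac{ds}{(e^{2\pi i s}-1)(e^{2\pi i(t-s)}-1)}$. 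Now I would apply the fundamental identity with $\psi(s)=\binom{t-s+\alpha+1}{\alpha+1}$: here $-1-\epsilon<\operatorname{Re}t=-(\alpha+2)\epsilon<-\epsilon$ because $0<\epsilon<1/(\alpha+2)$, Pascal's rule for the polynomial $m\mapsto\binom{m}{\alpha+1}$ gives $\psi(s)-\psi(s+1)=\binom{t-s+\alpha}{\alpha}$, and $\psi(t+1)=\binom{\alpha}{\alpha+1}=0$, $\psi(0)=\binom{t+\alpha+1}{\alpha+1}$. Hence $G_{\alpha+1}(t)=(-1)^{\alpha+1}\binom{t+\alpha+1}{\alpha+1}\frac{1}{e^{2\pi i t}-1}$, completing the induction.

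\emph{Where the difficulty lies.} The arithmetic (Pascal's rule, evaluation of $\psi$ at $t+1$ and $0$) and the peeling-off step (only absolute convergence of the iterated integral is needed, which follows from the estimates underlying Proposition~\ref{prop:I-estimate}) are routine. The one place that requires care is the fundamental identity: keeping track of the signs produced by the contour shift, and — using exactly the inequality $-1-\epsilon<\operatorname{Re}t<-\epsilon$ — verifying that the strip $-\epsilon<\operatorname{Re}s<1-\epsilon$ picks up precisely the two poles $s=0$ and $s=t+1$ of $h$, with the horizontal parts of the rectangle vanishing in the limit.
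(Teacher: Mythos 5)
Your proof is correct and follows essentially the same route as the paper's: an induction that peels off one integration variable, rewrites the binomial coefficient as a telescoping difference via Pascal's rule, and shifts the contour by one period so that the integral collapses to a residue. The only differences are cosmetic — you shift rightward (collecting the poles at $s=0$ and $s=t+1$, the latter killed by $\binom{\alpha}{\alpha+1}=0$) and package the step as a general identity for polynomials $\psi$ via an explicit partial-fraction decomposition, whereas the paper shifts leftward to pick up the pole at $s=-1$ and lets the binomial factor vanish at $s=t$.
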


\begin{proof}
    We use induction on $\alpha$.
    First, in the case of $\alpha=1$, the left hand side is equal to
    \begin{align*}
         &
        \int_{-\epsilon+i\,\mathbb{R}}\frac{ds}{e^{2\pi i s}-1}
        \frac{(t-s+1)-(t-s)}{e^{2\pi i (t-s)}-1}=
        \left(\int_{-\epsilon-1+i\,\mathbb{R}}-\int_{-\epsilon+\,\mathbb{R}}\right)
        \frac{ds}{e^{2\pi i s}-1}
        \frac{t-s}{e^{2\pi i (t-s)}-1}                               \\
         & =-2\pi i \, \mathrm{Res}_{s=-1} \frac{ds}{e^{2\pi i s}-1}
        \frac{t-s}{e^{2\pi i (t-s)}-1}=-\frac{t+1}{e^{2\pi i t}-1}.
    \end{align*}
    Next, suppose that $\alpha \ge 2$ and $\mathop{\mathrm{Re}}{t}=-(\alpha+1)\epsilon$.
        {}Since $\mathop{\mathrm{Re}}{(t-s_{1})}=-\alpha \epsilon$
    for $s_{1}\in -\epsilon+i\,\mathbb{R}$,
    the induction hypothesis implies that
    \begin{align*}
         &
        \left(\prod_{j=1}^{\alpha}\int_{-\epsilon+i\,\mathbb{R}}
        \frac{ds_{j}}{e^{2\pi i s_{j}}-1}\right)
        \frac{1}{e^{2\pi i (t-s_{1}-\cdots -s_{\alpha})}-1}                \\
         & =(-1)^{\alpha-1}
        \int_{-\epsilon+i\,\mathbb{R}}
        \frac{ds_{1}}{e^{2\pi i s_{1}}-1}
        \binom{t-s_{1}+\alpha-1}{\alpha-1}\frac{1}{e^{2\pi i (t-s_{1})}-1} \\
         & =(-1)^{\alpha-1}
        \int_{-\epsilon+i\,\mathbb{R}}
        \frac{ds_{1}}{e^{2\pi i s_{1}}-1}
        \left(\binom{t-s_{1}+\alpha}{\alpha}-\binom{t-s_{1}+\alpha-1}{\alpha}\right)
        \frac{1}{e^{2\pi i (t-s_{1})}-1}                                   \\
         & =(-1)^{\alpha-1}
        \left(\int_{-\epsilon-1+i\,\mathbb{R}}-\int_{-\epsilon+i\,\mathbb{R}}\right)
        \frac{ds_{1}}{e^{2\pi i s_{1}}-1}
        \binom{t-s_{1}+\alpha-1}{\alpha}\frac{1}{e^{2\pi i (t-s_{1})}-1}.
    \end{align*}
    Since $\binom{t-s_{1}+\alpha-1}{\alpha}$ is zero at $s_{1}=t$,
    it is equal to
    \begin{align*}
        (-1)^{\alpha}2\pi i \, \mathrm{Res}_{s_{1}=-1}
        \frac{ds_{1}}{e^{2\pi i s_{1}}-1}
        \binom{t-s_{1}+\alpha-1}{\alpha}\frac{1}{e^{2\pi i (t-s_{1})}-1}=
        (-1)^{\alpha}\binom{t+\alpha}{\alpha}
        \frac{1}{e^{2\pi i t}-1}.
    \end{align*}
    This completes the proof.
\end{proof}

\begin{rem}
    Suppose that $\epsilon>0$ is small enough.
    Then, using Proposition \ref{prop:omegaMZV-formula},
    we see that
    the infinite sum
    \begin{align*}
        R_{\omega}\begin{pmatrix}x_{1}, \ldots , x_{r} \\ y_{1}, \ldots , y_{r}
        \end{pmatrix}
         & =\sum_{\substack{\alpha_{1}, \ldots , \alpha_{r} \ge 0 \\
                \beta_{1}, \ldots , \beta_{r} \ge 0}}
        Z_{\omega}((e_{1}-g_{1})^{\alpha_{1}}g_{\beta_{1}+1} \cdots
        (e_{1}-g_{1})^{\alpha_{r}}g_{\beta_{r}+1})                \\
         & \qquad {}\times
        \prod_{a=1}^{r}
        \left(\frac{e^{2\pi i \omega y_{a}}-1}{2\pi i \omega }\right)^{\alpha_{a}}
        \left(\frac{e^{2\pi i \omega x_{a}}-1}{2\pi i \omega }\right)^{\beta_{a}}
    \end{align*}
    converges absolutely
    and defines a holomorphic function
    of $x_{1}, \ldots , x_{r}, y_{1}, \ldots , y_{r}$
    in a neighborhood of the origin of $\mathbb{C}^{2r}$.
        {}From Proposition \ref{prop:omegaMZV-formula}, we see that
    \begin{align}
        R_{\omega}\begin{pmatrix}x_{1}, \ldots , x_{r} \\ y_{1}, \ldots , y_{r}
        \end{pmatrix}=
        (2\pi i \omega)^{r}
        e^{-2\pi i \omega \sum_{a=1}^{r}y_{a}}
        \prod_{a=1}^{r}\int_{-\epsilon+i\,\mathbb{R}}
        \frac{dt_{a}}{e^{2\pi i t_{a}}-1}
        \prod_{a=1}^{r}
        \frac{e^{2\pi i \omega (t_{1}+\cdots +t_{a}-y_{a}t_{a})}}
        {1-e^{2\pi i \omega (x_{a}+t_{1}+\cdots +t_{a})}}.
        \label{eq:omegaMZV-generating-function2}
    \end{align}
    The contour $-\epsilon+i\,\mathbb{R}$ of the integration variable $t_{a}$ in
    \eqref{eq:omegaMZV-generating-function2}
    separates the poles of the integrand at
    $\mathbb{Z}_{\ge 0}$ and
    ${}-x_{k}-\sum_{\substack{1\le j \le k \\ j\not=a}}t_{j}+
        (1/\omega)\mathbb{Z}_{\ge 0} \,\, (k\ge a)$
    {}from those at
    $\mathbb{Z}_{<0}$ and
    ${}-x_{k}-\sum_{\substack{1\le j\le k \\ j\not=a}}t_{j}+
        (1/\omega)\mathbb{Z}_{<0} \,\, (k\ge a)$.

    Any function $f(x_{1}, \ldots , x_{r}, y_{1}, \ldots , y_{r})$
    which is holomorphic in a neighborhood of
    the origin of $\mathbb{C}^{2r}$
    is uniquely expanded into a power series of
    $(e^{2\pi i \omega x_{a}}-1)/2\pi i \omega$ and
    $(e^{2\pi i \omega y_{a}}-1)/2\pi i \omega \, (1\le a \le r)$ since
    their derivatives do not vanish at $x_{a}=0$ or $y_{a}=0$.
    Therefore, $R_{\omega}$ is a generating function of $\omega$MZVs.

    In the case of $\omega=1$,
    the generating function $R_{1}$ is uniquely determined from
    the initial formula
    \begin{align*}
        R_{1}\begin{pmatrix}x_{1} \\ y_{1}
        \end{pmatrix}=2\pi i \,
        \frac{1-e^{2\pi i x_{1}y_{1}}}{(1-e^{2\pi i x_{1}})(1-e^{2\pi i y_{1}})}
    \end{align*}
    and the recurrence relation
    \begin{align*}
         &
        R_{1}\begin{pmatrix}x_{1}, \ldots , x_{r} \\ y_{1}, \ldots , y_{r} \end{pmatrix}=
        \frac{2\pi i}{1-e^{2\pi i y_{r}}}\frac{1}{e^{2\pi i x_{r-1}}-e^{2\pi i x_{r}}} \\
         & \times
        \biggl\{
        R_{1}\begin{pmatrix}x_{1}, \ldots , x_{r-2}, x_{r-1} \\ y_{1}, \ldots , y_{r-2}, y_{r-1} \end{pmatrix}-
        R_{1}\begin{pmatrix}x_{1}, \ldots , x_{r-2}, x_{r} \\ y_{1}, \ldots , y_{r-2}, y_{r-1} \end{pmatrix}                                                \\
         & -e^{2\pi i x_{r}y_{r}}
        \left(
        R_{1}\begin{pmatrix}x_{1}, \ldots , x_{r-2}, x_{r-1} \\ y_{1}-y_{r}, \ldots , y_{r-2}-y_{r}, y_{r-1}-y_{r} \end{pmatrix}-
        R_{1}\begin{pmatrix}x_{1}, \ldots , x_{r-2}, x_{r} \\ y_{1}-y_{r}, \ldots , y_{r-2}-y_{r}, y_{r-1}-y_{r} \end{pmatrix}
        \right)
        \biggr\}
    \end{align*}
    for $r \ge 2$,
    which are derived from the formula \eqref{eq:omegaMZV-generating-function2}
    though we omit the details.
    Therefore,
    any $\omega$MZV with $\omega=1$ is a polynomial in $2\pi i$
    with rational coefficients.
\end{rem}

\subsection{Limit as $\omega \to +0$}

Here we summarize some results about
the limit of $\omega$MZVs as $\omega \to +0$.
The following two propositions are proved
in Appendix \ref{sec:app1}.

\begin{prop}\label{prop:omega-MZV-limit1}
    Let $v$ be the monomial \eqref{eq:basis-monomial} with $r\ge 1$
    and $\alpha_{1}, \ldots , \alpha_{r}, \beta_{1}, \ldots, \beta_{r}\ge 0$.
    \begin{enumerate}
        \item As $\omega \to +0$, it holds that $Z_{\omega}(v)=O((-\log{\omega})^{r})$.
              Hence $\lim_{\omega \to +0}Z_{\omega}(\hbar w)=0$
              for any $w \in \widehat{\mathfrak{H}^{0}}$.
        \item If there exist $s$ and $t$ such that $1\le s\le t\le r$,
              $\alpha_{s}\ge 1$ and
              $\beta_{t}\ge 1$, then $\lim_{\omega \to +0}Z_{\omega}(v)=0$.
    \end{enumerate}
\end{prop}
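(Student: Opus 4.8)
The plan is to work directly from the explicit integral formula \eqref{eq:omegaMZV-formula} in Proposition \ref{prop:omegaMZV-formula} and analyze the behavior of each factor as $\omega \to +0$. The key observation for part (i) is that each factor $\left(2\pi i\omega / (1-e^{-2\pi i\omega(t_1+\cdots+t_a)})\right)^{\beta_a+1}$ tends to $\left(1/(t_1+\cdots+t_a)\right)^{\beta_a+1}$ pointwise (and is bounded locally uniformly), while each binomial factor $\binom{t_a+\alpha_a}{\alpha_a}$ is $\omega$-independent and $(-2\pi i\omega)^{\alpha_a}\to 0$ when $\alpha_a \ge 1$. So the integrand has a finite pointwise limit; the only source of divergence is the decay of the kernels $1/(e^{2\pi i t_{j,a}}-1)$ along the contour, which from Proposition \ref{prop:I-estimate} behave like $e^{\pi(u-|u|)}$. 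First I would establish a dominating bound on $|F(t)|$ that is \emph{uniform in $\omega$} on a shrinking interval $0<\omega<\omega_0$: the estimates in Proposition \ref{prop:I-estimate} (with $\rho = \epsilon$) give $|I_\omega(g_k\,|\,t)| \le (2\pi\omega/c(2\pi\omega\epsilon))^k e^{-k\pi\omega(u+|u|)}$, and since $\omega/c(2\pi\omega\epsilon)$ is bounded as $\omega\to 0$ (the $c$-function behaves like $2\pi\omega\epsilon$ near $0$), one extracts a bound of the form $C \prod |e^{2\pi i t_{j,a}}-1|^{-1}$ times something controlled. The subtlety is that this crude bound is \emph{not} integrable as it stands — each kernel $1/(e^{2\pi i t}-1)$ on the line $\operatorname{Re} t = -\epsilon$ decays only on one side — so one must retain a genuine $\omega$-dependent decay from the $g$-factor, exactly as in the proof of Proposition \ref{prop:integral-convergent}. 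The honest approach is therefore: fix a small $\omega_0$, note that for $0<\omega\le\omega_0$ the integral converges with the same $\epsilon$, and rescale.

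For the logarithmic bound in part (i), I would change variables $t_a \mapsto t_a/\omega$ (so that the $g$-type factors become $\omega$-independent of the form $1/(1-e^{-2\pi i(t_1+\cdots+t_a)})^{\beta_a+1}$ up to powers of $2\pi i$), which turns each of the $r$ integration contours into $-\epsilon\omega + i\mathbb{R}$ and introduces a Jacobian factor $\omega^r$; the remaining kernels become $1/(e^{2\pi i t_{a}/\omega}-1)$. One then splits each $t_a$-integral into the region $|u_a|\le \omega$ (of length $O(\omega)$, where the integrand is $O(1/\omega)$ after the rescaling, contributing $O(1)$... ) and $|u_a|>\omega$ where one picks up the exponential decay. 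Carrying this through the nested structure produces a bound $O((-\log\omega)^r)$: each of the $r$ variables contributes one logarithm from the near-origin region where the $1/(e^{2\pi i t_a/\omega}-1)$ kernel has its pole cancelled against part of the $g$-factor. Honestly, the cleanest route is probably to instead use the generating-function identity \eqref{eq:omegaMZV-generating-function2} together with the known fact that in the limit $\omega\to 0$ the integral \eqref{eq:omegaMZV-formula} formally becomes the iterated-integral representation of MZVs, and to bound the error terms; but I would present the direct contour estimate since it is self-contained. The consequence $\lim_{\omega\to+0}Z_\omega(\hbar w)=0$ is then immediate: $Z_\omega(\hbar w) = 2\pi i\omega \cdot Z_\omega(bw')$-type expressions, so it is $\omega \cdot O((-\log\omega)^{r})\to 0$.

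For part (ii), the mechanism is different and cleaner. Suppose $\alpha_s\ge 1$ and $\beta_t\ge 1$ with $s\le t$. I would split the argument by whether the weight-lowering affects convergence: the factor $\binom{t_s+\alpha_s}{\alpha_s}$ is a polynomial in $t_s$ of degree $\alpha_s\ge 1$, and it is multiplied by $(-2\pi i\omega)^{\alpha_s}$, so the whole $\alpha_s$-block carries an explicit factor of $\omega^{\alpha_s}$ with $\alpha_s\ge 1$. If the remaining integral (with the $\binom{t_s+\alpha_s}{\alpha_s}$ weight) were merely $O((-\log\omega)^r)$ by the same estimate as in (i)—and the polynomial weight does not spoil this because polynomial growth in $t_s$ is still dominated by the exponential decay of the kernel—then the product is $\omega \cdot O((-\log\omega)^r) \to 0$. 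The role of the hypothesis $\beta_t\ge 1$ with $t\ge s$ is to guarantee admissibility/convergence of the relevant sub-integral: it ensures that after the manipulations the integral in the $t$-th and later variables still has a genuine $g_{\ge 2}$-type factor providing the decay needed so the $O((-\log\omega)^r)$ bound survives the extra polynomial factor; without it the polynomial weight could push the integrand out of the convergent regime. So the structure is: isolate the $\omega^{\alpha_s}$ prefactor, apply the uniform-in-$\omega$ integrability estimate (a polynomial-weighted version of the bound in the proof of Proposition \ref{prop:integral-convergent}, using $0<\omega<2$ and the slack in $|1-\omega|-1<0$), and conclude the limit is $0$.

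The main obstacle I anticipate is making the $O((-\log\omega)^r)$ estimate in part (i) rigorous and uniform: the pointwise limit of the integrand is easy, but the near-origin region $|u_{j,a}|\lesssim\omega$ where the kernel $1/(e^{2\pi i t_{j,a}}-1)$ is large requires a careful split-and-match argument against the $g$-factor, and the logarithmic divergence only emerges from that interaction. I would handle this by an induction on $r$ mirroring the structure of Lemma \ref{lem:e1-g1-integral} and the recursive shape of \eqref{eq:omegaMZV-formula}: reduce the innermost $t_r$-integral (which is genuinely of $g_{\beta_r+1}$-type with $\beta_r+1\ge 1$) to a bound of the form $C(-\log\omega)\cdot(\text{function of }t_1,\ldots,t_{r-1}\text{ with one fewer log and one more kernel})$, then iterate. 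Polynomial weights from the binomial factors are carried along harmlessly since the exponential kernel decay beats any polynomial. Because this excerpt says the proof is deferred to Appendix \ref{sec:app1}, I would only sketch this here and record that the two displayed consequences—the vanishing of $\lim Z_\omega(\hbar w)$ and the vanishing in case (ii)—both follow by multiplying an $\omega^{\ge 1}$ factor against a bound that grows at most like a power of $-\log\omega$.
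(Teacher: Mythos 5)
There is a genuine gap, and it is in your part (ii). Your argument is: factor out the prefactor $(-2\pi i\omega)^{\alpha_{s}}$ with $\alpha_{s}\ge 1$, claim the remaining polynomial-weighted integral is still $O((-\log\omega)^{r})$ because ``polynomial growth in $t_{s}$ is dominated by the exponential decay of the kernel,'' and conclude $\omega\cdot O((-\log\omega)^{r})\to 0$. The premise is false. On the contour $\mathrm{Re}\,t=-\epsilon$ the kernel $1/(e^{2\pi i t}-1)$ decays only as $\mathop{\mathrm{Im}}t\to-\infty$; in the opposite direction the only decay comes from the $g$-type factors, whose rate is $e^{-c\omega\,\mathop{\mathrm{Im}}(t_{1}+\cdots+t_{a})}$ and degenerates as $\omega\to+0$. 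Against that decay, $\int_{0}^{\infty}(1+u)^{\alpha_{s}}e^{-c\omega u}\,du\asymp\omega^{-\alpha_{s}-1}$, so the integral \emph{without} the $\omega^{\alpha_{s}}$ prefactor is of order $\omega^{-\alpha_{s}}(-\log\omega)^{r}$, not $(-\log\omega)^{r}$: the prefactor is exactly consumed by the binomial weight (this is precisely the content of Lemma \ref{lem:F-gamma}, where $F_{\gamma}(\rho)=\rho^{\gamma}\int_{0}^{\infty}(y+1)^{\gamma-1}e^{-\rho y}dy=O(1)$ rather than $O(\rho)$), and one lands back at the part (i) bound $O((-\log\omega)^{r})$ with nothing left over. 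Indeed, your argument as written would prove $\lim_{\omega\to+0}Z_{\omega}(v)=0$ whenever some $\alpha_{s}\ge 1$, with no condition on the $\beta$'s, which cannot be extracted from these estimates and is why the proposition insists on $\beta_{t}\ge 1$ with $s\le t$. The paper's actual mechanism is Lemma \ref{eq:estimate-psi}: because $\beta_{t}+1\ge 2$, the denominator carries a spare factor $(|x_{1}+\cdots+x_{t}|+1)$ beyond what the logarithmic estimate needs, and (using half of the kernel decay $e^{\pi\epsilon(x_{j}-|x_{j}|)/2}$ for $j\le t$, together with $s\le t$) this spare factor absorbs one power of $(|x_{s}|+1)$ from the numerator. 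The net polynomial degree drops to $\sum_{a}\alpha_{a}-1$ while the prefactor $\omega^{\sum_{a}\alpha_{a}}$ is retained, giving $O(\omega(-\log\omega)^{r})$. Your proposal does not contain this step or any substitute for it, and ``$\beta_{t}\ge 1$ guarantees convergence of the sub-integral'' is not the right role for that hypothesis — convergence holds for every monomial in $\mathcal{M}_{0}$ regardless.

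Your part (i) is closer to the mark but still only a sketch, and it misattributes the source of the logarithm: it does not come from a near-origin region of length $O(\omega)$ (which contributes $O(1)$, as your own parenthetical starts to say), but from the intermediate range $1\lesssim|\mathop{\mathrm{Im}}(t_{1}+\cdots+t_{a})|\lesssim 1/\omega$ where the integrand behaves like $1/|y|$ before the degenerate cutoff $e^{-c\omega|y|}$ takes effect; concretely, $\int_{\mathbb{R}}(|y|+1)^{-1}e^{-c\omega|y|}dy=O(-\log\omega)$. The paper obtains this by keeping the contour fixed (rescaling only by $\epsilon$), establishing the uniform integrand bound \eqref{eq:omega-MZV-integrand-estimate} via the inequality $|z/\sinh z|\le Me^{-\sigma|\mathop{\mathrm{Re}}z|}$, changing variables to the partial sums $y_{a}=x_{1}+\cdots+x_{a}$, and redistributing the binomial weights $(|x_{a}|+1)^{\alpha_{a}}$ over the $y_{a}$ so that the multiple integral factors into one-dimensional integrals handled by Lemma \ref{lem:F-gamma}. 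That decoupling step is the real work in (i), and your proposal defers it; but the strategy is compatible with the paper's and could be completed. The fatal issue is (ii).
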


\begin{prop}\label{prop:limit-Zg}
    Let $(k_{1}, \ldots , k_{r})$ be an admissible index.
    Then it holds that
    \begin{align*}
        \lim_{\omega \to +0}Z_{\omega}(g_{k_{1}} \cdots g_{k_{r}})=
        \zeta(k_{1}, \ldots , k_{r}).
    \end{align*}
\end{prop}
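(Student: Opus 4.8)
The plan is to pass to the limit $\omega\to+0$ inside the multiple integral defining $Z_{\omega}(g_{k_{1}}\cdots g_{k_{r}})$ and then to evaluate the resulting classical contour integral by an iterated residue computation reproducing the nested series for $\zeta(k_{1},\ldots,k_{r})$. Since $I_{\omega}(g_{k}\,|\,t)=\bigl(2\pi i\omega/(e^{-2\pi i\omega t}-1)\bigr)^{k}$ and $2\pi i\omega/(e^{-2\pi i\omega t}-1)\to -1/t$ as $\omega\to+0$ for fixed $t$ with $\mathop{\mathrm{Re}} t<0$, the integrand of $Z_{\omega}(g_{k_{1}}\cdots g_{k_{r}})$ converges, on the product of lines $\mathop{\mathrm{Re}} t_{a}=-\epsilon$, to
\begin{align*}
\prod_{a=1}^{r}\frac{1}{e^{2\pi i t_{a}}-1}\prod_{a=1}^{r}\frac{(-1)^{k_{a}}}{(t_{1}+\cdots+t_{a})^{k_{a}}},
\end{align*}
so the first task is to justify the interchange of limit and integral, and the second is to prove
\begin{align}
\prod_{a=1}^{r}\int_{-\epsilon+i\mathbb{R}}\frac{dt_{a}}{e^{2\pi i t_{a}}-1}\prod_{a=1}^{r}\frac{(-1)^{k_{a}}}{(t_{1}+\cdots+t_{a})^{k_{a}}}=\zeta(k_{1},\ldots,k_{r}).
\label{eq:classical-contour}
\end{align}

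For the interchange I would split the region of integration according to whether the quantities $2\pi\omega\,|\mathop{\mathrm{Im}}(t_{1}+\cdots+t_{a})|$, $1\le a\le r$, all stay below $\tfrac12$. On that ``bulk'' region the point $w=-2\pi i\omega(t_{1}+\cdots+t_{a})$ lies, for small $\omega$, in the disk $|w|<1$ on which $e^{w}-1$ vanishes only at $w=0$, so $|e^{w}-1|\ge c\,|w|$ and hence $|I_{\omega}(g_{k_{a}}\,|\,t_{1}+\cdots+t_{a})|\le C\,|t_{1}+\cdots+t_{a}|^{-k_{a}}$ uniformly in $\omega$; combined with Proposition \ref{prop:I-estimate} for the factors $(e^{2\pi i t_{a}}-1)^{-1}$, this yields an $\omega$-independent integrable majorant on the bulk --- its integrability being the absolute convergence of the integral in \eqref{eq:classical-contour}, which is furnished by the telescoping estimate used for the second task --- so dominated convergence applies. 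On the complement some $|\mathop{\mathrm{Im}}(t_{1}+\cdots+t_{a})|$ is of order at least $1/\omega$; there the sharper bounds $|I_{\omega}(g_{k}\,|\,s)|\le(C\omega)^{k}e^{-2\pi k\omega\,\mathop{\mathrm{Im}} s}$ for $2\pi\omega\,\mathop{\mathrm{Im}} s\ge1$ and $|I_{\omega}(g_{k}\,|\,s)|\le(C\omega)^{k}$ for $2\pi\omega\,\mathop{\mathrm{Im}} s\le-1$ feed into the estimates of the proof of Proposition \ref{prop:integral-convergent} and bound the contribution of this region by $O(\omega^{p})$ with $p\ge1$, the count of powers of $\omega$ working out because $|\bk|\ge k_{r}\ge2$. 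Together these give \eqref{eq:classical-contour} with its left side replaced by $\lim_{\omega\to+0}Z_{\omega}(g_{k_{1}}\cdots g_{k_{r}})$.

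To prove \eqref{eq:classical-contour} I would integrate out the variables in the order $t_{r},t_{r-1},\ldots,t_{1}$, using the elementary identity: if $\phi$ is holomorphic on a neighbourhood of $\{\mathop{\mathrm{Re}} t\le-\epsilon\}$ and decays there at least like $|t|^{-2}$ as $|t|\to\infty$, then shifting the contour to the left and collecting residues at the negative integers gives $\int_{-\epsilon+i\mathbb{R}}\phi(t)(e^{2\pi i t}-1)^{-1}\,dt=\sum_{n\ge1}\phi(-n)$. At the stage where $t_{j}$ is the innermost remaining variable, the factor multiplying $(e^{2\pi i t_{j}}-1)^{-1}$ is $\pm\prod_{a\ge j}\bigl(t_{1}+\cdots+t_{j}-M_{a}\bigr)^{-k_{a}}$, where $M_{j}=0$ and the $M_{a}$ for $a>j$ are sums of the positive integers introduced at the earlier stages; all its poles lie strictly to the right of $\mathop{\mathrm{Re}} t_{j}=-\epsilon$ and it decays like $|t_{j}|^{-(k_{j}+\cdots+k_{r})}$ with $k_{j}+\cdots+k_{r}\ge2$, so the identity applies, introduces a new index $m_{j}\ge1$ and substitutes $t_{j}\mapsto-m_{j}$. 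Carrying this down to $t_{1}$ turns the left side of \eqref{eq:classical-contour} into $\sum_{m_{1},\ldots,m_{r}\ge1}\prod_{a=1}^{r}(m_{1}+\cdots+m_{a})^{-k_{a}}$, and the substitution $n_{a}=m_{1}+\cdots+m_{a}$ identifies this with $\sum_{0<n_{1}<\cdots<n_{r}}\prod_{a}n_{a}^{-k_{a}}=\zeta(k_{1},\ldots,k_{r})$; the absolute convergence making the successive exchanges of sum and integral legitimate is exactly the estimate $k_{j}+\cdots+k_{r}-(r-j)\ge2$.

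The step I expect to be the main obstacle is the interchange of limit and integral: the exponential decay of $I_{\omega}$ degrades at rate $\omega$, so no single $\omega$-independent integrable majorant exists over the whole domain, and one must peel off the $\omega$-dependent tail region $|\mathop{\mathrm{Im}}(t_{1}+\cdots+t_{a})|\gtrsim1/\omega$ --- where $Z_{\omega}$ is still absolutely convergent but with a bound of order $\omega^{p}$, $p\ge1$, resting on admissibility --- before dominated convergence can be applied on the bulk. The evaluation of the limiting integral in the second step is, by comparison, routine once the contour-shift identity is set up.
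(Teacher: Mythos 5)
Your overall architecture --- dominated convergence to pass to the classical contour integral, then iterated extraction of residues at the negative integers to recover the nested sum --- is the same as the paper's. Your second step is essentially the paper's second step with the limits taken in a different order: the paper shifts all $r$ contours by a finite amount $M$, collects the truncated sum $\sum_{l_{1},\ldots,l_{r}=1}^{M}\prod_{a}(l_{1}+\cdots+l_{a})^{-k_{a}}$, and kills the $r$ remainder integrals $S_{b}$ via Lemma \ref{lem:K-integrable}, whereas you shift each contour to $-\infty$ in turn, which instead requires justifying $r$ interchanges of infinite sums with integrals; both work, and the inequality $k_{j}+\cdots+k_{r}\ge (r-j)+2$ you isolate is indeed the one that makes either version close.

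The genuine weakness is in the domination step. First, your premise that no single $\omega$-independent integrable majorant exists over the whole domain is false: the paper obtains one from $|z/\sinh{z}|\le Me^{-\sigma|\mathop{\mathrm{Re}}{z}|}$, which gives $|I_{\omega}(g_{k}\,|\,t)|\le M^{k}|t|^{-k}e^{-k\pi\omega(\sigma|\mathop{\mathrm{Im}}{t}|+\mathop{\mathrm{Im}}{t})}$ on the whole line; the $\omega$-dependent exponential \emph{growth} this allows in the lower half-plane has rate $O(\omega)$ and is absorbed by half of the $\omega$-free decay $e^{\pi(u_{a}-|u_{a}|)}$ supplied by $\prod_{a}(e^{2\pi i t_{a}}-1)^{-1}$ once $\omega<(2|\bk|)^{-1}$, leaving exactly the integrable function of Lemma \ref{lem:K-integrable}. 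So no bulk/tail split is needed. Second, if you do split, the tail estimate as you state it does not close: on the sub-region where only one partial sum, say the $a_{0}$-th, has $|\mathop{\mathrm{Im}}{(t_{1}+\cdots+t_{a_{0}})}|\gtrsim 1/\omega$, the bound $|I_{\omega}(g_{k_{a}}\,|\,\cdot)|\le(C\omega)^{k_{a}}$ is available only for $a=a_{0}$, so the gain is merely $\omega^{k_{a_{0}}}$ (possibly $\omega^{1}$), while integrating the remaining factors naively costs up to $\omega^{-1}$ per variable; the count ``$|\bk|\ge k_{r}\ge 2$'' settles only the sub-region where \emph{all} partial sums are large, and even there the relevant inequality is $|\bk|-r=\sum_{a}(k_{a}-1)\ge 1$, not $|\bk|\ge 2$. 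The mixed sub-regions can still be controlled, but only by exploiting that a huge $|\mathop{\mathrm{Im}}{(t_{1}+\cdots+t_{a_{0}})}|$ adjacent to a moderate $|\mathop{\mathrm{Im}}{(t_{1}+\cdots+t_{a_{0}+1})}|$ forces $\mathop{\mathrm{Im}}{t_{a_{0}+1}}$ to be very negative, where $1/(e^{2\pi i t_{a_{0}+1}}-1)$ decays like $e^{-2\pi|\mathop{\mathrm{Im}}{t_{a_{0}+1}}|}$ --- an argument your sketch does not supply. I would replace the split by the paper's single global majorant.
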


\begin{rem}
    The $q$MZV defined by \eqref{eq:def-qMZV} has
    similar properties.
    See \cite[Proposition 3.4]{qSMZV}.
\end{rem}

\begin{cor}\label{cor:limit-Ze}
    Let $(k_{1}, \ldots , k_{r})$ be an admissible index.
    Then it holds that
    \begin{align*}
        \lim_{\omega \to +0}Z_{\omega}(e_{k_{1}} \cdots e_{k_{r}})=
        \zeta(k_{1}, \ldots , k_{r}).
    \end{align*}
\end{cor}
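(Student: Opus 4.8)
The plan is to reduce the statement to Proposition~\ref{prop:limit-Zg} by expanding $e_{k_{1}}\cdots e_{k_{r}}$ around the ``leading'' word $g_{k_{1}}\cdots g_{k_{r}}$ and showing that every remaining word contributes $0$ to $Z_{\omega}$ in the limit $\omega\to +0$, via Proposition~\ref{prop:omega-MZV-limit1}. Since $Z_{\omega}$ is $\mathcal{C}$-linear, this will give $\lim_{\omega\to +0}Z_{\omega}(e_{k_{1}}\cdots e_{k_{r}})=\lim_{\omega\to +0}Z_{\omega}(g_{k_{1}}\cdots g_{k_{r}})=\zeta(k_{1},\ldots ,k_{r})$.

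First I would expand the product using the identities $e_{1}=g_{1}+(e_{1}-g_{1})$ and $e_{k}=g_{k}+\hbar g_{k-1}$ for $k\ge 2$ recorded in Section~\ref{subsec:qMZV}. Writing $e_{k_{1}}\cdots e_{k_{r}}=\prod_{a=1}^{r}\bigl(g_{k_{a}}+(e_{k_{a}}-g_{k_{a}})\bigr)$ and multiplying out (using that $\hbar$ is central) produces a sum over subsets $S\subseteq\{1,\ldots ,r\}$ whose $S$-term is $\hbar^{\,p(S)}\,m_{S}$, where $m_{S}$ is obtained from $g_{k_{1}}\cdots g_{k_{r}}$ by replacing, for each $a\in S$, the factor $g_{k_{a}}$ by $e_{1}-g_{1}$ if $k_{a}=1$ and by $g_{k_{a}-1}$ if $k_{a}\ge 2$, and $p(S)=\#\{a\in S:k_{a}\ge 2\}$. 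Since $\bk$ is admissible, $k_{r}\ge 2$, so the last factor of every $m_{S}$ comes from position $r$ and equals $g_{k_{r}}$ or $g_{k_{r}-1}$, neither of which is $e_{1}-g_{1}$; hence every $m_{S}\in\mathcal{M}_{0}$ and both $e_{k_{1}}\cdots e_{k_{r}}$ and $g_{k_{1}}\cdots g_{k_{r}}$ lie in $\widehat{\mathfrak{H}^{0}}$. The $S=\emptyset$ term is $g_{k_{1}}\cdots g_{k_{r}}$, so
\[
e_{k_{1}}\cdots e_{k_{r}}-g_{k_{1}}\cdots g_{k_{r}}
=\sum_{\emptyset\neq S\subseteq\{1,\ldots ,r\}}\hbar^{\,p(S)}\,m_{S}.
\]

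Then I would show $\lim_{\omega\to +0}Z_{\omega}(\hbar^{\,p(S)}m_{S})=0$ for each nonempty $S$, splitting into two cases. If $p(S)\ge 1$, then $\hbar^{\,p(S)}m_{S}=\hbar\cdot\bigl(\hbar^{\,p(S)-1}m_{S}\bigr)$ with $\hbar^{\,p(S)-1}m_{S}\in\widehat{\mathfrak{H}^{0}}$, and Proposition~\ref{prop:omega-MZV-limit1}(i) gives the limit $0$. If $p(S)=0$ (and $S\neq\emptyset$), then every $a\in S$ satisfies $k_{a}=1$, so $m_{S}$ is a word of the form \eqref{eq:basis-monomial} in which at least one exponent $\alpha_{s}$ is $\ge 1$ (the one coming from an element of $S$) and whose last $g$-block is $g_{k_{r}}$ with $k_{r}\ge 2$, so its last $\beta$-exponent is $k_{r}-1\ge 1$; taking $t$ to be the index of that last block, which automatically satisfies $t\ge s$, Proposition~\ref{prop:omega-MZV-limit1}(ii) gives $Z_{\omega}(m_{S})\to 0$. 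Summing over $S$ and invoking Proposition~\ref{prop:limit-Zg} then yields the corollary. The only point requiring care --- and the reason both parts of Proposition~\ref{prop:omega-MZV-limit1} are used rather than part (i) alone --- is that the correction words need not individually lie in $\hbar\,\widehat{\mathfrak{H}^{0}}$: those containing a factor $e_{1}-g_{1}$ with coefficient $1$ are handled instead by part (ii).
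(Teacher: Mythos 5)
Your proposal is correct and follows essentially the same route as the paper: expand $e_{k}=g_{k}+\hbar g_{k-1}$ (resp.\ $e_{1}=g_{1}+(e_{1}-g_{1})$), send the correction terms divisible by $\hbar$ to zero via Proposition \ref{prop:omega-MZV-limit1}(i), handle the remaining terms (which contain an $e_{1}-g_{1}$ and end in $g_{k_{r}}$ with $k_{r}\ge 2$) via part (ii), and conclude with Proposition \ref{prop:limit-Zg}. The only cosmetic difference is that you organize the expansion as a sum over subsets $S$ while the paper peels off the last letter first; the substance is identical.
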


\begin{proof}
    It holds that $e_{k}=g_{k}+\hbar g_{k-1}$ for $k\ge 2$
    and $e_{1}=(e_{1}-g_{1})+g_{1}$.
    Since $k_{r} \ge 2$, we see that
    $e_{k_{1}}\cdots e_{k_{r-1}}e_{k_{r}}-e_{k_{1}}\cdots e_{k_{r-1}}g_{k_{r}}$
    belongs to $\hbar \widehat{\mathfrak{H}^{0}}$.
    Moreover,
    $e_{k_{1}}\cdots e_{k_{r-1}}g_{k_{r}}-g_{k_{1}}\cdots g_{k_{r-1}}g_{k_{r}}$
    is a $\mathcal{C}$-linear combination
    of monomials \eqref{eq:basis-monomial} with
    $\alpha_{s}\ge 1$ for some $1\le s \le r$
    and $\beta_{r}=k_{r}-1\ge 1$
    modulo $\hbar \widehat{\mathfrak{H}^{0}}$.
    Hence, from Proposition \ref{prop:omega-MZV-limit1}
    and Proposition \ref{prop:limit-Zg},
    we see that
    \begin{align*}
        \lim_{\omega \to +0}Z_{\omega}(e_{k_{1}} \cdots e_{k_{r}})=
        \lim_{\omega \to +0}Z_{\omega}(g_{k_{1}} \cdots g_{k_{r}})=
        \zeta(k_{1}, \ldots , k_{r}).
    \end{align*}
    This completes the proof.
\end{proof}

For an admissible index $\bk=(k_{1}, \ldots, k_{r})$,
we set
\begin{align}
    \zeta_{\omega}(\bk)=Z_{\omega}(e_{k_{1}}\cdots e_{k_{r}})=
    (2\pi i \omega)^{|\bk|}
    \prod_{a=1}^{r}\int_{-\epsilon+i\,\mathbb{R}}\frac{dt_{a}}{e^{2\pi i t_{a}}-1}
    \prod_{a=1}^{r}
    \frac{e^{2\pi i \omega (k_{a}-1)(t_{1}+\cdots +t_{a})}}
    {(1-e^{2\pi i \omega (t_{1}+\cdots +t_{a})})^{k_{a}}},
    \label{eq:omega-MZV-BZ}
\end{align}
where $|\bk|=\sum_{a=1}^{r}k_{a}$ is the weight of $\bk$.
Corollary \ref{cor:limit-Ze} shows that
$\zeta_{\omega}(\bk)\to \zeta(\bk)$ as $\omega \to +0$.


\section{Double shuffle relations}
\label{sec:DS-omegaMZV}

\subsection{Shuffle product relations}

\begin{prop}\label{prop:shuffle-prod}
    Let $\mathfrak{G}$ be the $\mathcal{C}$-submodule of
    the tensor product $\mathfrak{H}^{\otimes 3}$
    spanned by the set
    $\{w_{1}\otimes w_{2}\otimes w_{3}\mid (w_{1}, w_{2}, w_{3})\in \mathcal{T}\}$,
    where $\mathcal{T}$ is the set defined by \eqref{eq:def-monomial-T}.
    Define the $\mathcal{C}$-linear map
    $\mathcal{F}_{\omega}: \mathfrak{G} \to \mathbb{C}$ by
    \eqref{eq:integral-convergent} and $\mathcal{C}$-linearity.
    We also define the $\mathcal{C}$-linear map
    $\mathcal{S}: \mathfrak{G} \to \widehat{\mathfrak{H}^{0}}$ by
    $\mathcal{S}(w_{1}\otimes w_{2} \otimes w_{3})=(w_{1}\,\shp_{\hbar}\,w_{2})w_{3}$
    for $(w_{1}, w_{2}, w_{3})\in \mathcal{T}$.
    Then it holds that
    $Z_{\omega}(\mathcal{S}(w))=\mathcal{F}_{\omega}(w)$
    for any $w \in \mathfrak{G}$.
\end{prop}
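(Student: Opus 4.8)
The plan is to prove the identity $Z_\omega(\mathcal S(w))=\mathcal F_\omega(w)$ by reducing it to a statement about the shuffle product and then proving that statement by induction on the combined length of the first two tensor factors. Since both $Z_\omega\circ\mathcal S$ and $\mathcal F_\omega$ are $\mathcal C$-linear, it suffices to check the identity on generators $w_1\otimes w_2\otimes w_3$ with $(w_1,w_2,w_3)\in\mathcal T$, writing $w_j=v_{j,1}\cdots v_{j,r_j}$ with $v_{j,a}\in\mathcal A$. Fix a small enough $\epsilon>0$ so that all integrals below are absolutely convergent (Proposition \ref{prop:integral-convergent}). The key structural point is that, on the right-hand side, after integrating out the variables $t_{3,a}$ attached to $w_3$ we may regard $\mathcal F_\omega(w_1\otimes w_2\otimes w_3)$ as built from an iterated-integral representation in the variables $s_a:=t_{1,1}+\cdots$ and $s'_b:=t_{2,1}+\cdots$, and the left-hand side $Z_\omega((w_1\shp_\hbar w_2)w_3)$ is the sum over all interleavings of these two chains of integration variables.

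First I would set up the iterated-integral formalism: for a word $u_1\cdots u_r$ with $u_a\in\mathcal A$ I would write $Z_\omega(u_1\cdots u_r w_3)$ as an integral over ordered contours in which the variables of $w_3$ sit "after" those of $u_1\cdots u_r$ in the sense of the argument $[t_1]+[t_2]+t_{3,1}+\cdots$ appearing in \eqref{eq:integral-convergent}; because $I_\omega$ depends only on the partial sums $t_1+\cdots+t_a$, a change of variables to these partial sums converts $Z_\omega$ into an integral whose integrand is a product of one-variable factors $I_\omega(u_a\,|\,\tau_a)$ times a ``measure'' factor built from the $1/(e^{2\pi i t}-1)$'s. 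The shuffle relations then become a combinatorial identity: summing the integrand over all shuffles of the $(w_1)$-chain and the $(w_2)$-chain of partial-sum variables equals the product integrand coming from $\mathcal F_\omega$. This is exactly the classical ``shuffle = product of iterated integrals'' mechanism, but carried out over vertical-line contours rather than a real interval.

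The induction itself runs on $r_1+r_2$. The base case $r_1=0$ or $r_2=0$ is immediate since $1\shp_\hbar w_2=w_2$ and $\mathcal F_\omega(1\otimes w_2\otimes w_3)=Z_\omega(w_2 w_3)=Z_\omega(\mathcal S(1\otimes w_2\otimes w_3))$, using that the $w_1$-part of the contour is empty. For the inductive step, write $w_1=w_1'u$ and $w_2=w_2'v$ with $u,v\in\mathcal A$; peel off the outermost (rightmost) letters and use the defining recursion of $\shp_\hbar$. Here one must be careful: the shuffle product on $\mathfrak H$ is defined letter-by-letter in $a,b$, but on the algebraically independent generators $\mathcal A$ it does not act generator-by-generator — so I would instead use the generating-function form \eqref{eq:shuffle-prod-aa} together with \eqref{eq:e1g1-central} to compute $w_1'H(X)\shp_\hbar w_2'H(Y)$, matching the three terms on its right-hand side (the ``$(1+\hbar X)$'' term, the ``$(1+\hbar Y)$'' term, and the central ``$(e_1-g_1)$'' term) against the corresponding contour manipulations on the analytic side: moving a contour, applying the recursion of $I_\omega$ in the last argument, and picking up a residue. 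The factor $H(X+Y+\hbar XY)$ on the algebraic side should correspond on the analytic side to merging the two top integration variables into a single one with the shift dictated by the $\hbar XY$ term.

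The main obstacle will be step three: matching the generating-function shuffle identity \eqref{eq:shuffle-prod-aa} with the contour bookkeeping, in particular producing the ``$(1+\hbar X)$'' and ``$(e_1-g_1)$'' correction terms from the analytic side. Concretely, when one slides the contour of the outermost $w_1$-variable across that of the outermost $w_2$-variable (or vice versa), the only singularity crossed is the pole of a factor $1/(e^{2\pi i t}-1)$, and its residue is what feeds the ``central'' $(e_1-g_1)=\hbar b$ term; the appearance of $\hbar=2\pi i\omega$ there is exactly the factor $2\pi i$ from the residue times the Jacobian that rescales $\omega$-variables, and one checks this pins down $I_\omega(e_1-g_1\,|\,t)=2\pi i\omega$ correctly. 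Getting every constant and every shift of argument to line up — in the presence of the extra $w_3$-block, whose variables must stay ``downstream'' of everything in $w_1,w_2$ throughout — is the delicate part; the rest is the routine shuffle induction. I would organize the write-up so that the purely combinatorial shuffle identity is isolated as a sublemma and the analytic contour lemma (one contour shift = one residue, with the residue of $dt/(e^{2\pi it}-1)$ at $t\in\mathbb Z$ being $1/(2\pi i)$) is isolated as another, so that the induction just glues them together.
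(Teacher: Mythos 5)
Your overall skeleton — reduce to generators of $\mathcal{T}$, induct on $r_{1}+r_{2}$, dispose of the base cases $r_{1}=0$ or $r_{2}=0$ by the definition of $\mathcal{F}_{\omega}$, handle a trailing $e_{1}-g_{1}$ via \eqref{eq:e1g1-central}, and match the remaining case against the generating-function identity \eqref{eq:shuffle-prod-aa} — is exactly the paper's. But the mechanism you propose for the crucial step is not the one that works, and I do not believe it can be made to work as stated. You invoke the ``classical shuffle $=$ product of iterated integrals'' picture and propose to realize the three terms of \eqref{eq:shuffle-prod-aa} by sliding the contour of the outermost $w_{1}$-variable across that of the outermost $w_{2}$-variable and collecting a residue of $1/(e^{2\pi i t}-1)$ to produce the central $(e_{1}-g_{1})$ term. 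There is nothing to slide: in $\mathcal{F}_{\omega}$ the variables $t_{1,a}$ and $t_{2,b}$ range over independent copies of $-\epsilon+i\,\mathbb{R}$, the factors attached to $w_{1}$ depend only on the $t_{1,a}$ and those attached to $w_{2}$ only on the $t_{2,b}$, and the only coupling is through the arguments $[t_{1}]+[t_{2}]+t_{3,1}+\cdots$ of the $w_{3}$-block. Moreover the simplex-decomposition argument underlying the classical iterated-integral shuffle has no analogue here, since the domain is a product of full vertical lines, not an ordered region.

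What actually closes the induction is a pointwise identity of integrands, with no contour deformation and no residue. Setting $H(t,X)=\sum_{k\ge 1}I_{\omega}(g_{k}\,|\,t)X^{k-1}=2\pi i\omega/(e^{-2\pi i\omega t}-1-2\pi i\omega X)$, a direct partial-fraction computation (write $A=e^{-2\pi i\omega t}-1$, $B=e^{-2\pi i\omega s}-1$ and expand) gives
\begin{align*}
    H(t,X)H(s,Y)=\bigl((1+2\pi i\omega X)H(t,X)+(1+2\pi i\omega Y)H(s,Y)+2\pi i\omega\bigr)\,H(t+s,X+Y+2\pi i\omega XY),
\end{align*}
which is term-for-term the image under $I_{\omega}$ of \eqref{eq:shuffle-prod-aa}. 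The central $\hbar=2\pi i\omega$ arises from this algebraic identity, not from crossing a pole; the ``merging'' $H(t+s,\cdot)$ is already in the right form because $[t_{1}]+[t_{2}]$ is precisely the first argument of the $w_{3}$-block in $\mathcal{F}_{\omega}(w_{1}'\otimes w_{2}'\otimes g_{m}w_{3})$. So the residue calculus you plan to develop is the wrong tool for this proposition (it is the engine of the duality proof, Theorem \ref{thm:duality}, and of Lemma \ref{lem:e1-g1-integral}, but not of this one), and as written your step three would stall.
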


\begin{proof}
    It is enough to show the proposition
    in the case of $w=w_{1}\otimes w_{2}\otimes w_{3}$
    with $(w_{1}, w_{2}, w_{3})\in \mathcal{T}$.
    Set $w_{j}=v_{j,1} \cdots v_{j,r_{j}} \, (1\le j \le 3)$, where
    $v_{j,k}\in \mathcal{A}$.
    We prove $Z_{\omega}(\mathcal{S}(w))=\mathcal{F}_{\omega}(w)$
    by induction on $r_{1}+r_{2}$.
    If $r_{1}=0$ or $r_{2}=0$, it follows from the definition of
    $\mathcal{F}_{\omega}$.
    Suppose that $r_{1}\ge 1$ and $r_{2}\ge 1$.
    If $v_{1, r_{1}}$ or $v_{2, r_{2}}$ is equal to $e_{1}-g_{1}$,
    then the desired equality is trivial since
    $I_{\omega}(e_{1}-g_{1}\,|\, t)$ does not depend on $t$.
    Suppose that
    $v_{1, r_{1}}$ and $v_{2, r_{2}}$ belong to the set $\{g_{k}\}_{k\ge 1}$.
    Set
    \begin{align*}
        H(t, X)=\sum_{k\ge 1}I_{\omega}(g_{k}\,|\,t)X^{k-1}.
    \end{align*}
    If $|X|$ is sufficiently small, it holds that
    \begin{align*}
        H(t, X)=\frac{2\pi i \omega}{e^{-2\pi i \omega t}-1-2\pi i \omega X}.
    \end{align*}
    Using this expression we see that
    \begin{align*}
        H(t, X)H(s, Y)
         & =\left(
        (1+2\pi i \omega X)H(t, X)+(1+2\pi i \omega Y)H(s, Y)+2\pi i \omega
        \right)                                 \\
         & \times H(t+s, X+Y+2\pi i \omega XY).
    \end{align*}
    Comparing it with \eqref{eq:shuffle-prod-aa},
    we see that the desired equality follows from the induction hypothesis.
\end{proof}

\begin{thm}[Shuffle product relations of $\omega$MZVs]\label{thm:shuffle-prod}
    For any $w, w' \in \widehat{\mathfrak{H}^{0}}$,
    it holds that
    \begin{align*}
        Z_{\omega}(w\,\shp_{\hbar}\,w')=Z_{\omega}(w)Z_{\omega}(w').
    \end{align*}
\end{thm}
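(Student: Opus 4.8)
The plan is to obtain the theorem as the special case $w_{3}=1$ of Proposition \ref{prop:shuffle-prod}. Since both sides of the asserted identity are $\mathcal{C}$-bilinear in $(w, w')$ and $\widehat{\mathfrak{H}^{0}}$ is the $\mathcal{C}$-span of $\mathcal{M}_{0}$, it suffices to prove it when $w, w' \in \mathcal{M}_{0}$; the uniform argument below even covers the trivial cases $w=1$ or $w'=1$ (where $1\,\shp_{\hbar}\,w'=w'$ and $Z_{\omega}(1)=1$). For $w, w' \in \mathcal{M}_{0}$ the triple $(w, w', 1)$ lies in $\mathcal{M}_{0}\times\mathcal{M}_{0}\times\{1\}\subseteq\mathcal{T}$, so $w\otimes w'\otimes 1\in\mathfrak{G}$, and Proposition \ref{prop:shuffle-prod} yields $Z_{\omega}(\mathcal{S}(w\otimes w'\otimes 1))=\mathcal{F}_{\omega}(w\otimes w'\otimes 1)$.

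It then remains to identify the two sides. On the left, $\mathcal{S}(w\otimes w'\otimes 1)=(w\,\shp_{\hbar}\,w')\cdot 1=w\,\shp_{\hbar}\,w'$, which lies in $\widehat{\mathfrak{H}^{0}}$ because that module is closed under $\shp_{\hbar}$; hence the left-hand side equals $Z_{\omega}(w\,\shp_{\hbar}\,w')$. On the right, write $w=v_{1,1}\cdots v_{1,r_{1}}$ and $w'=v_{2,1}\cdots v_{2,r_{2}}$ with $v_{j,a}\in\mathcal{A}$, and set $r_{3}=0$ in \eqref{eq:integral-convergent}: the product over $a=1,\dots,r_{3}$ is empty, and the integrand of $\mathcal{F}_{\omega}(w\otimes w'\otimes 1)$ factors as a product of a function of $t_{1,1},\dots,t_{1,r_{1}}$ alone and a function of $t_{2,1},\dots,t_{2,r_{2}}$ alone. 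By the absolute convergence asserted in Proposition \ref{prop:integral-convergent}, Fubini's theorem splits the multiple integral into the product of the two corresponding iterated integrals, which are exactly $Z_{\omega}(w)$ and $Z_{\omega}(w')$ by the definition of $Z_{\omega}$. This gives $\mathcal{F}_{\omega}(w\otimes w'\otimes 1)=Z_{\omega}(w)Z_{\omega}(w')$, and the theorem follows.

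The only point requiring care is the choice of the contour parameter $\epsilon$, since $\mathcal{F}_{\omega}$, $Z_{\omega}(w)$, and $Z_{\omega}(w')$ are introduced with slightly different admissible ranges for it. I would fix a single $\epsilon$ with $0<\epsilon<\min\bigl(1,\;\omega/(r_{1}+r_{2}),\;1/(r_{1}\omega),\;1/(r_{2}\omega)\bigr)$, which lies in all three ranges, and invoke the $\epsilon$-independence statements (Proposition \ref{prop:integral-convergent} and the Definition of $\omega$MZV) to compare the three values. Apart from this bookkeeping and the routine Fubini factorization, there is no genuine obstacle: the convergence estimate and the inductive computation via the shuffle functional equation for $H(t,X)$ have already been carried out in Propositions \ref{prop:integral-convergent} and \ref{prop:shuffle-prod}, so the theorem is essentially a corollary of the latter.
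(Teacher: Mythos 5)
Your proposal is correct and follows the paper's own argument: the paper likewise deduces the theorem by applying Proposition \ref{prop:shuffle-prod} to $w\otimes w'\otimes 1\in\mathfrak{G}$ and reading off $\mathcal{F}_{\omega}(w\otimes w'\otimes 1)=Z_{\omega}(w)Z_{\omega}(w')$ from the factorization of the integrand when $w_{3}=1$. Your added remarks on Fubini and on choosing a single admissible $\epsilon$ are just explicit bookkeeping of steps the paper leaves implicit.
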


\begin{proof}
    Since $w\otimes w' \otimes 1 \in \mathfrak{G}$,
    Proposition \ref{prop:shuffle-prod} and
    the definition of $\mathcal{F}_{\omega}$ imply that
    \begin{align*}
        Z_{\omega}(w\,\shp_{\hbar}\,w')=
        Z_{\omega}(\mathcal{S}(w\otimes w'\otimes 1))=
        \mathcal{F}_{\omega}(w \otimes w' \otimes 1)=
        Z_{\omega}(w)Z_{\omega}(w').
    \end{align*}
    This completes the proof.
\end{proof}

\subsection{Duality and double shuffle relations}

\begin{thm}[Duality of $\omega$MZVs] \label{thm:duality}
    For any $w \in \widehat{\mathfrak{H}^{0}}$, it holds that
    \begin{align*}
        Z_{\omega}(\sigma(w))=Z_{\omega}(w).
    \end{align*}
\end{thm}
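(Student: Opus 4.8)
The plan is to reduce the duality to a contour-deformation argument applied to the explicit integral formula in Proposition~\ref{prop:omegaMZV-formula}. Since both sides are $\mathcal{C}$-linear and $\sigma$ is a $\mathcal{C}$-linear isomorphism of $\widehat{\mathfrak{H}^0}$ that acts on the monomial basis \eqref{eq:basis-monomial} by the explicit formula \eqref{eq:dual-monomial} (it reverses the blocks and swaps the roles of $\alpha$'s and $\beta$'s), it suffices to prove
\begin{align*}
    Z_\omega\bigl((e_1-g_1)^{\alpha_1}g_{\beta_1+1}\cdots(e_1-g_1)^{\alpha_r}g_{\beta_r+1}\bigr)
    = Z_\omega\bigl((e_1-g_1)^{\beta_r}g_{\alpha_r+1}\cdots(e_1-g_1)^{\beta_1}g_{\alpha_1+1}\bigr)
\end{align*}
for all non-negative integers $\alpha_1,\dots,\alpha_r,\beta_1,\dots,\beta_r$. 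Equivalently, via the generating function $R_\omega$ introduced in the Remark after Lemma~\ref{lem:e1-g1-integral}, the duality is the statement
\begin{align*}
    R_\omega\begin{pmatrix}x_1,\dots,x_r\\ y_1,\dots,y_r\end{pmatrix}
    = R_\omega\begin{pmatrix}y_r,\dots,y_1\\ x_r,\dots,x_1\end{pmatrix},
\end{align*}
and I would work at this level because \eqref{eq:omegaMZV-generating-function2} gives a clean closed-form multiple integral to manipulate.

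The core of the argument is then a change of variables combined with moving the contours. Starting from \eqref{eq:omegaMZV-generating-function2}, I would first pass to the variables $T_a = t_1+\cdots+t_a$ (so $t_a = T_a - T_{a-1}$), turning the integrand into a product over the "partial-sum" variables, then perform the substitution $T_a \mapsto -T_{r+1-a} + (\text{shift involving the }x\text{'s})$ that reverses the order of the nested sums; this is precisely the substitution that realizes $\sigma$ on the integral side, and it is modeled on the telescoping/residue computation already carried out in Lemma~\ref{lem:e1-g1-integral} and on the analogous $q$-world duality of \cite{qDS}. The pole-separation property of the contours recorded explicitly in the Remark after Lemma~\ref{lem:e1-g1-integral} — namely that $-\epsilon+i\mathbb{R}$ separates $\mathbb{Z}_{\ge 0}\cup(-x_k-\cdots+(1/\omega)\mathbb{Z}_{\ge0})$ from $\mathbb{Z}_{<0}\cup(-x_k-\cdots+(1/\omega)\mathbb{Z}_{<0})$ — is exactly what guarantees that the reversing substitution sends admissible contours to admissible contours, so that no residues are crossed and the integral is genuinely invariant. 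The exponential prefactor $e^{-2\pi i\omega\sum y_a}$ and the $e^{2\pi i\omega(T_a - y_a t_a)}$ factors are tracked through the substitution and should recombine into the prefactor of the dual side; this bookkeeping, while unglamorous, is the analogue of verifying \eqref{eq:dual-monomial}.

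The main obstacle I anticipate is making the contour deformation rigorous for the \emph{full} generating function simultaneously rather than term by term: one must check that after the reversing substitution the new contours can be slid back to the standard form $-\epsilon + i\mathbb{R}$ without sweeping any poles, uniformly in the auxiliary parameters $x_a,y_a$ in a neighborhood of the origin, and that all interchanges of integration are justified by the absolute convergence established in Proposition~\ref{prop:integral-convergent} (with $w_2=w_3=1$) together with the estimates of Proposition~\ref{prop:I-estimate}. A secondary subtlety is that $\sigma$ is a priori only defined after scalar extension by $\hbar^{-1}$, so one should confirm that the combination of monomials appearing does produce a well-defined element of $\widehat{\mathfrak{H}^0}$ before evaluating $Z_\omega$ — but this is already guaranteed by \eqref{eq:dual-monomial}, which exhibits $\sigma$ as an honest $\mathcal{C}$-linear self-map of the basis. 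Modulo these analytic points, the identity is a direct consequence of the symmetry of the integral in \eqref{eq:omegaMZV-generating-function2} under simultaneous reversal of the variables and swap of the $x$- and $y$-parameters, mirroring Satoh's $\sigma$-duality on the algebraic side.
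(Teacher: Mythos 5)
Your reduction to the monomial basis via \eqref{eq:dual-monomial} is correct and matches the paper, and the reformulation of the duality as the symmetry $R_{\omega}\binom{x_{1},\ldots,x_{r}}{y_{1},\ldots,y_{r}}=R_{\omega}\binom{y_{r},\ldots,y_{1}}{x_{r},\ldots,x_{1}}$ of the generating function is also a correct restatement of the target. However, the mechanism you propose --- an order-reversing affine substitution $T_{a}\mapsto -T_{r+1-a}+(\hbox{shift})$ followed by contour deformation --- cannot produce this symmetry, and this is a genuine gap rather than a technical point. The integrand of \eqref{eq:omegaMZV-generating-function2} contains two structurally different kinds of factors: the kernels $1/(e^{2\pi i t_{a}}-1)$, which are $1$-periodic in the exponent (poles on $\mathbb{Z}$) and carry the $y_{a}$ only through exponentials, and the factors $1/(1-e^{2\pi i \omega(x_{a}+T_{a})})$, which are $(1/\omega)$-periodic (poles on a lattice of spacing $1/\omega$) and carry the $x_{a}$ in the denominator. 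The duality must exchange these two roles (the dual side needs the $y$'s sitting inside $(1/\omega)$-periodic denominators and the $x$'s in exponentials), and no affine change of variables can convert a $1$-periodic factor into a $(1/\omega)$-periodic one for $\omega\neq 1$. The same obstruction is visible already at the level of a single monomial: $Z_{\omega}(g_{2})$ is a one-dimensional integral while $Z_{\omega}(\sigma(g_{2}))=Z_{\omega}((e_{1}-g_{1})g_{1})$ is two-dimensional, so no change of variables relates the two integrals as they stand.

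The missing idea, which is the heart of the paper's proof, is Lemma \ref{lem:integral-duality}: the identity $\int_{-\epsilon+i\,\mathbb{R}}e^{\alpha t}\,dt/(e^{2\pi i t}-1)=1/(e^{\alpha}-1)$ is used to rewrite each factor $I_{\omega}(g_{\beta_{j}+1}\,|\,\cdot)$ as a $(\beta_{j}+1)$-fold integral against fresh variables $s_{j,b}$ with kernel $1/(e^{2\pi i s_{j,b}}-1)$ and a purely exponential integrand. This inflates both $Z_{\omega}(w)$ and $Z_{\omega}(\sigma(w))$ to integrals of the common dimension $\sum_{j}(\alpha_{j}+\beta_{j}+2)$, with integrand $\exp(-2\pi i\omega\sum_{1\le a\le b\le r}[t_{a}][s_{b}])$, which is manifestly invariant under the relabeling $t_{j,a}\to s_{r+1-j,a}$, $s_{j,b}\to t_{r+1-j,b}$; no contour is ever moved. (This is the exact integral analogue of interchanging the order of summation in the double sum behind the resummation duality of \cite{qDS}.) Without this symmetrization step --- or some equivalent resummation that trades denominators for exponentials at the cost of new integration variables --- your contour-reversal plan does not go through.
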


In the proof of Theorem \ref{thm:duality} we use the following equality.

\begin{lem}\label{lem:integral-duality}
    Suppose that $0<\epsilon<1$ and $0<\mathop{\mathrm{Im}}{\alpha}<2\pi$.
    Then we have
    \begin{align*}
        \int_{-\epsilon+i\,\mathbb{R}}dt\,\frac{e^{\alpha t}}{e^{2\pi i t}-1}=
        \frac{1}{e^{\alpha}-1}.
    \end{align*}
\end{lem}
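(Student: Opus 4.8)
The plan is to evaluate the integral by a contour-shift (residue) argument that exploits the quasi-periodicity of the integrand. Write $f(t)=e^{\alpha t}/(e^{2\pi i t}-1)$ and $\alpha=\alpha_{1}+i\alpha_{2}$ with $\alpha_{1}\in\mathbb{R}$ and $0<\alpha_{2}<2\pi$. The function $f$ is meromorphic with simple poles exactly at the integers; since the denominator has derivative $2\pi i\,e^{2\pi i n}=2\pi i$ at $t=n$, one has $\mathrm{Res}_{t=n}f=e^{\alpha n}/(2\pi i)$.

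First I would record the decay of $f$ on vertical lines. On $t=x+iu$ with $x$ in a fixed compact interval we have $|e^{\alpha t}|=e^{\alpha_{1}x-\alpha_{2}u}$, while $|e^{2\pi i t}-1|$ stays bounded away from $0$ (as long as $x\notin\mathbb{Z}$), behaves like $1$ as $u\to+\infty$ and like $e^{-2\pi u}$ as $u\to-\infty$. Hence $|f(x+iu)|=O(e^{-\alpha_{2}u})$ as $u\to+\infty$ and $O(e^{(2\pi-\alpha_{2})u})$ as $u\to-\infty$, uniformly for $x\in[-1-\epsilon,-\epsilon]$; both exponents are negative in the respective regimes precisely because $0<\alpha_{2}<2\pi$. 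In particular the integral over $-\epsilon+i\,\mathbb{R}$ is absolutely convergent.

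Next I would use the key identity. Since $e^{2\pi i(t-1)}=e^{2\pi i t}$, we get $f(t-1)=e^{-\alpha}f(t)$, and therefore, writing $J=\int_{-\epsilon+i\,\mathbb{R}}f(t)\,dt$,
\[
    \int_{-1-\epsilon+i\,\mathbb{R}}f(t)\,dt=\int_{-\epsilon+i\,\mathbb{R}}f(t-1)\,dt=e^{-\alpha}J.
\]
Then I would apply the residue theorem to the positively oriented rectangle with vertices $-1-\epsilon\pm iR$ and $-\epsilon\pm iR$. In the open strip $-1-\epsilon<\mathrm{Re}\,t<-\epsilon$ (which contains $-1$ because $0<\epsilon<1$) the only pole of $f$ is the simple one at $t=-1$, with $\mathrm{Res}_{t=-1}f=e^{-\alpha}/(2\pi i)$. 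By the uniform decay estimate of the first step, the contributions of the horizontal edges $\mathrm{Im}\,t=\pm R$ tend to $0$ as $R\to\infty$. Passing to the limit gives
\[
    J-e^{-\alpha}J=2\pi i\,\mathrm{Res}_{t=-1}f=e^{-\alpha},
\]
whence $J=e^{-\alpha}/(1-e^{-\alpha})=1/(e^{\alpha}-1)$, as claimed.

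There is essentially no deep obstacle here; the only points requiring care are bookkeeping ones — making the decay estimates explicit enough that the horizontal edges genuinely vanish and that the integral converges, and keeping the orientation and signs in the residue theorem straight. It is worth emphasizing that the hypothesis $0<\mathrm{Im}\,\alpha<2\pi$ is exactly what is needed: $\mathrm{Im}\,\alpha>0$ gives decay as $u\to+\infty$ and $\mathrm{Im}\,\alpha<2\pi$ gives decay as $u\to-\infty$.
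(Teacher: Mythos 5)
Your proof is correct and is essentially the paper's argument: both exploit the quasi-periodicity $f(t+1)=e^{\alpha}f(t)$ of the integrand to compare the integral with its shift by one unit and then collect the residue at the single integer pole crossed by the contour. The only cosmetic differences are that you shift left to $-1-\epsilon+i\,\mathbb{R}$ and pick up the pole at $t=-1$ (solving $J-e^{-\alpha}J=e^{-\alpha}$), whereas the paper shifts right to $1-\epsilon+i\,\mathbb{R}$ and picks up the pole at $t=0$ after factoring out $1/(e^{\alpha}-1)$.
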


\begin{proof}
    Since $0<\mathop{\mathrm{Im}}{\alpha}<2\pi$,
    the integral in the left hand side is absolutely convergent.
    The desired formula is derived as follows.
    \begin{align*}
        \int_{-\epsilon+i\,\mathbb{R}}dt\,\frac{e^{\alpha t}}{e^{2\pi i t}-1}
         & =
        \frac{1}{e^{\alpha}-1}
        \int_{-\epsilon+i\,\mathbb{R}}dt\,\frac{e^{\alpha (t+1)}-e^{\alpha t}}
        {e^{2\pi i t}-1}                        \\
         & =
        \frac{1}{e^{\alpha}-1}
        \left(\int_{-\epsilon+1+i\,\mathbb{R}}-\int_{-\epsilon+i\,\mathbb{R}}\right)
        dt\,\frac{e^{\alpha t}}{e^{2\pi i t}-1} \\
         & =
        \frac{2\pi i}{e^{\alpha}-1}
        \mathrm{Res}_{t=0}
        \frac{e^{\alpha t}}{e^{2\pi i t}-1}dt=
        \frac{1}{e^{\alpha}-1}.
    \end{align*}
\end{proof}

\begin{proof}[Proof of Theorem \ref{thm:duality}]
    It is enough to prove the equality
    $Z_{\omega}(w)=Z_{\omega}(\sigma(w))$
    in the case where
    $w$ is the monomial \eqref{eq:basis-monomial}
    with $r\ge 1$ and $\alpha_{1}, \ldots , \alpha_{r}, \beta_{1}, \ldots , \beta_{r}\ge 0$.
    Note that $\sigma(w)$ is given by \eqref{eq:dual-monomial}.
    {}From the definition we have
    \begin{align*}
        Z_{\omega}(w)=(2\pi i \omega)^{\sum_{j=1}^{r}\alpha_{j}}
        \prod_{j=1}^{r}\prod_{a=1}^{\alpha_{j}+1}
        \int_{-\epsilon+i\,\mathbb{R}}\frac{dt_{j, a}}{e^{2\pi i t_{j,a}}-1}
        \prod_{j=1}^{r}
        I_{\omega}(g_{\beta_{j}+1}\,|\,[t_{1}]+\cdots +[t_{j}]),
    \end{align*}
    where $[t_{j}]=\sum_{a=1}^{\alpha_{a}+1}t_{j,a} \, (1\le j \le r)$.
    Using Lemma \ref{lem:integral-duality} we see that
    \begin{align*}
        I_{\omega}(g_{\beta+1}\,|\,t)=(2\pi i \omega)^{\beta+1}
        \left(\prod_{b=1}^{\beta+1}\int_{-\epsilon+i\,\mathbb{R}}
        \frac{ds_{b}}{e^{2\pi i s_{b}}-1}\right)
        e^{-2\pi i \omega t [s]}
    \end{align*}
    for $\beta\ge 0$, where $[s]=\sum_{b=1}^{\beta+1}s_{b}$.
    Hence it holds that
    \begin{align}
        Z_{\omega}(w)
         & =(2\pi i \omega)^{\sum_{j=1}^{r}(\alpha_{j}+\beta_{j}+1)}
        \prod_{j=1}^{r}
        \left(\prod_{a=1}^{\alpha_{j}+1}
        \int_{-\epsilon+i\,\mathbb{R}}\frac{dt_{j, a}}{e^{2\pi i t_{j,a}}-1}
        \prod_{b=1}^{\beta_{j}+1}
        \int_{-\epsilon+i\,\mathbb{R}}\frac{ds_{j, b}}{e^{2\pi i s_{j,b}}-1}
        \right)
        \label{eq:duality-proof1}                                    \\
         & \qquad \quad {}\times
        e^{-2 \pi i \omega \sum_{1\le a\le b \le r}[t_{a}][s_{b}]},
        \nonumber
    \end{align}
    where $[s_{b}]=\sum_{j=1}^{\beta_{b}+1}s_{j, b}$.
    Now change the integration variables
    $t_{j, a}\to s_{r+1-j, a}$ and $s_{j, b}\to t_{r+1-j, b}$,
    and we obtain the integral \eqref{eq:duality-proof1} with
    $\alpha_{j}$ and $\beta_{j}$ replaced by
    $\beta_{r+1-j}$ and $\alpha_{r+1-j}$, respectively.
    Thus we see that $Z_{\omega}(w)=Z_{\omega}(\sigma(w))$.
\end{proof}

As discussed in Remark \ref{rem:DS},
we obtain the harmonic product relations
and the double shuffle relations for $\omega$MZVs from
Theorem \ref{thm:shuffle-prod} and Theorem \ref{thm:duality}:

\begin{thm}[Harmonic product relations of $\omega$MZVs] \label{thm:harmonic-prod}
    For any $w, w' \in \widehat{\mathfrak{H}^{0}}$,
    it holds that
    \begin{align*}
        Z_{\omega}(w\ast_{\hbar}w')=Z_{\omega}(w)Z_{\omega}(w').
    \end{align*}
\end{thm}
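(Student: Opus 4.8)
The plan is to deduce the harmonic product relations from the two results already established in this section, namely Theorem~\ref{thm:shuffle-prod} (shuffle product relations) and Theorem~\ref{thm:duality} (duality), exactly along the lines sketched in Remark~\ref{rem:DS}. The only extra ingredient needed beyond those two theorems is the purely algebraic identity from Proposition~\ref{prop:sigma}, which says that $w \ast_{\hbar} w' = \sigma(\sigma(w)\,\shp_{\hbar}\,\sigma(w'))$ for all $w, w' \in \widehat{\mathfrak{H}^0}$; this holds for $\omega$MZVs because it is an identity in $\widehat{\mathfrak{H}^0}$ itself, independent of which $\mathcal{C}$-linear evaluation map is applied afterwards.

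Concretely, I would argue as follows. Fix $w, w' \in \widehat{\mathfrak{H}^0}$. Since $\sigma$ restricts to a $\mathcal{C}$-linear automorphism of $\widehat{\mathfrak{H}^0}$ (by the computation \eqref{eq:dual-monomial}) and $\widehat{\mathfrak{H}^0}$ is closed under $\shp_{\hbar}$, each of $\sigma(w)$, $\sigma(w')$, $\sigma(w)\,\shp_{\hbar}\,\sigma(w')$, and $\sigma(\sigma(w)\,\shp_{\hbar}\,\sigma(w'))$ lies in $\widehat{\mathfrak{H}^0}$, so $Z_{\omega}$ may legitimately be applied to all of them. Then
\begin{align*}
    Z_{\omega}(w\ast_{\hbar}w')
     &= Z_{\omega}\bigl(\sigma(\sigma(w)\,\shp_{\hbar}\,\sigma(w'))\bigr)
      = Z_{\omega}\bigl(\sigma(w)\,\shp_{\hbar}\,\sigma(w')\bigr) \\
     &= Z_{\omega}(\sigma(w))\,Z_{\omega}(\sigma(w'))
      = Z_{\omega}(w)\,Z_{\omega}(w'),
\end{align*}
where the first equality is Proposition~\ref{prop:sigma}, the second and fourth are Theorem~\ref{thm:duality}, and the third is Theorem~\ref{thm:shuffle-prod}. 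This is the entire proof.

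Honestly, there is no real obstacle here: the content of the harmonic product relations has already been absorbed into the proofs of the shuffle relations and the duality, and this theorem is just the formal bookkeeping that combines them via Satoh's anti-involution identity. The only point requiring a moment's care is the one flagged above — checking that every element to which $Z_{\omega}$ is applied genuinely belongs to its domain $\widehat{\mathfrak{H}^0}$ — but this is immediate from the closure properties of $\widehat{\mathfrak{H}^0}$ under $\shp_{\hbar}$ and under $\sigma$ recorded in Section~\ref{sec:preliminaries}. One could alternatively give a direct proof mirroring the integral manipulations used for the shuffle relations, but that would needlessly reprove Proposition~\ref{prop:sigma}; the deduction above is cleaner and is the route the paper's own Remark~\ref{rem:DS} points to.
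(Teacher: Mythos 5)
Your proof is correct and is exactly the argument the paper itself uses: the paper derives Theorem~\ref{thm:harmonic-prod} from Theorem~\ref{thm:shuffle-prod} and Theorem~\ref{thm:duality} via Proposition~\ref{prop:sigma}, precisely as sketched in Remark~\ref{rem:DS}. Your additional check that every intermediate element lies in $\widehat{\mathfrak{H}^0}$ is a sensible (if routine) piece of diligence that the paper leaves implicit.
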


\begin{thm}[Double shuffle relations of $\omega$MZVs]
    \label{thm:omegaMZV-DS}
    For any $w, w' \in \widehat{\mathfrak{H}^{0}}$,
    it holds that
    \begin{align*}
        Z_{\omega}(w\,\shp_{\hbar}\,w'-w\ast_{\hbar}w')=0.
    \end{align*}
\end{thm}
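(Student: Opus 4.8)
The plan is to deduce the double shuffle relations as an immediate formal consequence of the three structural facts already assembled: the shuffle product relation (Theorem \ref{thm:shuffle-prod}), the duality (Theorem \ref{thm:duality}), and Satoh's identity (Proposition \ref{prop:sigma}) relating $\ast_\hbar$ to $\shp_\hbar$ through $\sigma$. Since this exactly mirrors the situation for $q$MZVs recorded in Remark \ref{rem:DS}, there is no genuinely new analytic or combinatorial content here; the work has all been done in the preceding sections. The only slightly nontrivial intermediate step is establishing the harmonic product relation (Theorem \ref{thm:harmonic-prod}), after which Theorem \ref{thm:omegaMZV-DS} follows by a one-line subtraction.

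Concretely, I would first prove Theorem \ref{thm:harmonic-prod}. Fix $w, w' \in \widehat{\mathfrak{H}^0}$. By Proposition \ref{prop:sigma} we have $w \ast_\hbar w' = \sigma(\sigma(w)\,\shp_\hbar\,\sigma(w'))$. Applying $Z_\omega$ and using Theorem \ref{thm:duality} (which says $Z_\omega \circ \sigma = Z_\omega$ on $\widehat{\mathfrak{H}^0}$, noting that $\sigma$ preserves $\widehat{\mathfrak{H}^0}$ and that $\shp_\hbar$ keeps us inside $\widehat{\mathfrak{H}^0}$ by \eqref{eq:shuffle-prod-aa}), we get $Z_\omega(w \ast_\hbar w') = Z_\omega(\sigma(w)\,\shp_\hbar\,\sigma(w'))$. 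Then Theorem \ref{thm:shuffle-prod} turns the right-hand side into $Z_\omega(\sigma(w))\,Z_\omega(\sigma(w'))$, and one more application of Theorem \ref{thm:duality} to each factor yields $Z_\omega(w)\,Z_\omega(w')$. This is precisely the chain of equalities displayed in Remark \ref{rem:DS}, now justified by the $\omega$-analogues rather than the $q$-analogues.

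Given Theorems \ref{thm:shuffle-prod} and \ref{thm:harmonic-prod}, the proof of Theorem \ref{thm:omegaMZV-DS} is then simply
\begin{align*}
    Z_\omega(w\,\shp_\hbar\,w' - w\ast_\hbar w')
    = Z_\omega(w\,\shp_\hbar\,w') - Z_\omega(w\ast_\hbar w')
    = Z_\omega(w)Z_\omega(w') - Z_\omega(w)Z_\omega(w') = 0,
\end{align*}
using $\mathcal{C}$-linearity of $Z_\omega$ in the first step. One should remark that $w\,\shp_\hbar\,w' - w\ast_\hbar w'$ indeed lies in $\widehat{\mathfrak{H}^0}$, so that $Z_\omega$ may be applied: this is because $\widehat{\mathfrak{H}^0}$ is closed under both $\shp_\hbar$ and $\ast_\hbar$, as recalled in Section \ref{sec:preliminaries}.

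There is essentially no obstacle in this final step — it is pure bookkeeping. If anything, the only point requiring a moment's care is making sure that every element to which $Z_\omega$ is applied genuinely belongs to its domain $\widehat{\mathfrak{H}^0}$ (in particular in the nested expression $\sigma(\sigma(w)\,\shp_\hbar\,\sigma(w'))$), but this is guaranteed by the closure statements for $\shp_\hbar$, $\ast_\hbar$, and $\sigma$ already established. All the real difficulty was concentrated in proving the shuffle product relation via the multiple-integral identity of Proposition \ref{prop:shuffle-prod} and the duality via the contour-shift computation of Lemma \ref{lem:integral-duality}; once those are in hand, Theorem \ref{thm:omegaMZV-DS} is a corollary in the strict sense.
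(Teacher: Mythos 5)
Your proposal is correct and follows exactly the route the paper takes: it derives the harmonic product relation (Theorem \ref{thm:harmonic-prod}) from Proposition \ref{prop:sigma}, Theorem \ref{thm:shuffle-prod} and Theorem \ref{thm:duality} via the chain of equalities of Remark \ref{rem:DS}, and then obtains Theorem \ref{thm:omegaMZV-DS} by linearity of $Z_{\omega}$. No discrepancies to report.
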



\section{Extended double Ohno relations}\label{sec:extended-DO}

\subsection{Algebraic formulation}

For an admissible index $\bk=(k_{1}, \ldots , k_{r})$
and non-negative integers $m$ and $n$,
we define the double Ohno sum of $\omega$MZVs by
\begin{align*}
    O_{m, n}(\bk)=\sum_{
    \substack{m_{1}, \ldots , m_{r}, n_{1}, \ldots , n_{r}\ge 0 \\
            m_{1}+\cdots+m_{r}=m
    \\ n_{1}+\cdots+n_{r}=n}}
    \zeta_{\omega}(k_{1}+m_{1}+n_{1}, \cdots k_{r}+m_{r}+n_{r}),
\end{align*}
where $\zeta_{\omega}(\bk)$ is given by \eqref{eq:omega-MZV-BZ}.
Recall that $\mathfrak{h}=\mathbb{Q}\langle x, y \rangle$
is the non-commutative polynomial ring,
$z_{k}=yx^{k-1}$ for $k\ge 1$ and
$z_{\bk}=z_{k_{1}} \cdots z_{k_{r}}$
for an index $\bk=(k_{1}, \ldots , k_{r})$.
We define the formal power series
$\Omega(\bk \, | \, \xi, \eta)$ in $\xi$ and $\eta$ by
\begin{align}
    \Omega(\bk \, | \, \xi, \eta)=\sum_{m, n\ge 0}
    O_{m, n}(\bk)\xi^{m}\eta^{n},
    \label{eq:def-Omega-monomial}
\end{align}
and define the $\mathbb{Q}$-linear map
$\Omega(\cdot \,|\,\xi,\eta): y\mathfrak{h}x \longrightarrow \mathbb{C}[[\xi, \eta]]$
by $\Omega(z_{\bk}\,|\,\xi,\eta)=\Omega(\bk\,|\,\xi,\eta)$
for any admissible index $\bk$.

For a non-negative integer $n$, we define the ideal $I_{n}$ of
$\mathbb{C}[[\xi,\eta]]$ by
$I_{n}=\{\sum_{i+j\ge n}c_{ij}\xi^{i}\eta^{j} \mid c_{ij}\in\mathbb{C}\}$.
We endow the ring $\mathbb{C}[[\xi,\eta]]$ with
the topology such that the ideals $\{I_{n}\}_{n\ge 0}$
form a fundamental system of neighborhoods of zero.
Then we extend the map $\Omega(\cdot \,|\,\xi,\eta)$ to the $\mathbb{Q}$-linear map
on the formal power series ring $(y\mathfrak{h}x)[[X]]$ by
\begin{align*}
    \Omega({\textstyle \sum_{j\ge 0}w_{j}X^{j}} \, |\,  \xi, \eta)=
    \sum_{j\ge 0}\Omega(w_{j} \, | \,  \xi, \eta)\xi^{j}\eta^{j},
\end{align*}
where $w_{j} \in y\mathfrak{h}x \, (j\ge 0)$.

\begin{thm}[Extended double Ohno relations of $\omega$MZVs]
    \label{thm:extended-double-Ohno}
    For any $w \in \mathfrak{a}$, it holds that
    \begin{align*}
        \Omega(ywx \, | \, \xi, \eta)=\Omega( y \tau(w) x \, | \, \xi, \eta)
    \end{align*}
    (see Section \ref{subsec:DO-qMZV} for the definition of $\mathfrak{a}$ and $\tau$).
\end{thm}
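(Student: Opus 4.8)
The plan is to imitate the connected-sum proof of the extended double Ohno relations for $q$MZVs in \cite{HSS}, replacing the $q$-series manipulations by contour-integral manipulations involving Ruijsenaars' hyperbolic gamma function. First I would recast the generating series $\Omega(ywx\,|\,\xi,\eta)$ as a multiple integral. Using Proposition \ref{prop:omegaMZV-formula} (or rather the generating-function identity \eqref{eq:omegaMZV-generating-function2} for $R_\omega$) one writes $\zeta_\omega(\bk)$ and hence the double Ohno sum $O_{m,n}(\bk)$ as an absolutely convergent multiple integral over contours $-\epsilon+i\,\mathbb{R}$; summing over $m,n$ with the formal variables $\xi,\eta$ turns the index shifts $k_a\mapsto k_a+m_a+n_a$ into geometric factors, so that $\Omega(ywx\,|\,\xi,\eta)$ becomes an integral whose integrand is a product of factors of the form $I_\omega(\cdot)$ times a ``connector'' built from the extra variables $\xi,\eta$. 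The dual index operation $\bk\mapsto\bk^\dagger$ on the $q$-side corresponds to the anti-involution $\sigma$ of Section \ref{sec:preliminaries}; the anti-automorphism $\tau$ on $\mathfrak{a}$ is the $X$-deformation of that duality, so the target identity $\Omega(ywx)=\Omega(y\tau(w)x)$ is a ``deformed duality'' statement, and the natural tool to prove it is a change of variables in the multiple integral together with a contour-shift/residue argument, exactly as in the proof of Theorem \ref{thm:duality} but now carrying the $\xi,\eta$ parameters.

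Concretely, the key steps I would carry out are: (1) introduce a \emph{connected integral}: a function $Z\bigl(\begin{smallmatrix}\bk\\ \bl\end{smallmatrix}\,\big|\,\xi,\eta\bigr)$ defined by a multiple integral in which a left block of variables encodes the index $\bk$, a right block encodes $\bl$, and a single ``connecting factor'' — the $\omega$-analogue of the connector $\frac{(x)_m(y)_n}{(x+y)_{m+n}}$-type kernel of \cite{SekiYamamoto,HSS} — couples the two blocks; this connecting factor is where the hyperbolic gamma function $G(\,\cdot\,)$ enters, since the $q$-shifted-factorial ratios in the connector have, in the present setting, product representations in terms of $G$. (2) Prove a \emph{transport relation}: moving one letter ($x$ or $y$) from the right block across the connector to the left block changes $\bl$ in a controlled way, and correspondingly acts on the connector; one direction of transport realizes multiplication by $x$ and $\tau(x)$, the other realizes $y$ and $\tau(y)$, and the factor $(1+yxX)^{-1}$ is accounted for by the geometric series coming from summing the extra index-shift variable attached to $X$. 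This transport relation is proved by a one-variable contour shift picking up a single residue, just as in Lemma \ref{lem:integral-duality} and Lemma \ref{lem:e1-g1-integral}. (3) Iterate: starting from $Z\bigl(\begin{smallmatrix}ywx\\ \varnothing\end{smallmatrix}\bigr)=\Omega(ywx\,|\,\xi,\eta)$ and transporting all letters of $w$ one at a time from the left block to the right block, one arrives at $Z\bigl(\begin{smallmatrix}\varnothing\\ y\tau(w)x\end{smallmatrix}\bigr)=\Omega(y\tau(w)x\,|\,\xi,\eta)$ because $\tau$ is an anti-automorphism (the order reversal is exactly what iterated transport produces), while the boundary factors $y\cdots x$ on both ends are symmetric under the whole procedure; the identity $O(ywx)=O(y\tau(w)x)$ follows. (4) Throughout, justify absolute convergence of every integral that appears — using the estimates of Proposition \ref{prop:I-estimate} and the asymptotics of $G$ — and justify that the interchange of the (formal) sums over $m,n,j$ with the integrals is legitimate at the level of formal power series in $\xi,\eta$, which is harmless since each coefficient is a finite sum.

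The main obstacle I expect is step (1)–(2): identifying the correct $\omega$-analogue of the Seki–Yamamoto connector and verifying that it has the two transport properties simultaneously. In the $q$-world the connector is a ratio of $q$-Pochhammer symbols whose functional equation under $m\mapsto m+1$ is an elementary algebraic identity; in the $\omega$-world the analogous object is an integral kernel, and its ``functional equation'' must be established as a contour-shift identity, so one has to choose the kernel so that the single residue picked up when shifting a contour by $1$ reproduces exactly the recursion that the harmonic/shuffle structure dictates. This is where Ruijsenaars' hyperbolic gamma function is essential: $G$ satisfies the two shift equations $G(z+\tfrac{i}{2}\omega_1)/G(z-\tfrac{i}{2}\omega_1)=2\cosh(\pi z/\omega_2)$ (and its $\omega_1\leftrightarrow\omega_2$ partner), and it is precisely the product of two such reciprocal shift relations — one in each period — that matches the double ($\xi$ and $\eta$) deformation of the connector. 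Getting the periods, the shift parameters, and the contour separating the two families of poles (as in the Remark after Proposition \ref{prop:omegaMZV-formula}) all mutually consistent, and checking that no spurious poles are crossed during the transport, is the technically delicate heart of the argument; once the connector and its transport relation are pinned down, steps (3)–(4) are a bookkeeping induction of the same flavour as the proof of Theorem \ref{thm:duality}.
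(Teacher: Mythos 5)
Your plan follows the same route as the paper: a connected integral \eqref{eq:def-connector} whose connector $\Theta$ is built from Ruijsenaars' hyperbolic gamma function, transport relations proved by shifting one contour by $1$ and picking up a single residue (Theorem \ref{thm:DO-transport}), and then the algebraic iteration with the anti-automorphism $\tau$ exactly as in \cite{HSS}. The transport step, the role of the two shift equations of $G$ in encoding the double ($\xi$ and $\eta$) deformation, and the convergence bookkeeping are all correctly identified.

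There is, however, one genuine gap: the base case of the iteration. You write that one starts from the connected integral with an empty right block, equal to $\Omega(ywx\,|\,\xi,\eta)$, and that ``the boundary factors $y\cdots x$ on both ends are symmetric under the whole procedure,'' as if the identification of the connected integral with the Ohno generating function at the endpoints were automatic. It is not. In the paper the connected object $\mathcal{I}(\bk,\bl\,|\,\lambda,\mu)$ always couples two nonempty blocks through $\Theta([t],[u],\lambda,\mu)$, and the boundary identity is $\mathcal{I}(\bk,\{1\}\,|\,\lambda,\mu)=d(\lambda,\mu)\,O(\bk_{\uparrow}\,|\,\lambda,\mu)$ (Theorem \ref{thm:DO-initial}), with a nontrivial normalization $d(\lambda,\mu)$ that must be divided out before expanding in $\xi$ and $\eta$. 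Establishing this requires evaluating in closed form a one-variable integral of a ratio of four hyperbolic gamma functions; the paper does this via Proposition \ref{prop:saalschutz}, a hyperbolic analogue of the non-terminating Saalsch\"utz formula obtained by a limiting argument from \cite[Corollary 4.6]{BRS}. This closed-form evaluation is a substantive ingredient, comparable in weight to the transport relation itself; your plan neither identifies it nor offers a substitute, and without it the transport induction has nothing to terminate against.
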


We prove Theorem \ref{thm:extended-double-Ohno}
in the rest of this section.
For this purpose we consider a generating function of
the double Ohno sums.

For $k\ge 1$, we set
\begin{align}
    J_{k}(t\,|\,\lambda, \mu)=
    \frac{e^{2\pi i \omega (k-1)t}}
    {(1-e^{2\pi i \omega (\lambda+t)})(1-e^{2\pi i \omega (\mu+t)})(1-e^{2\pi i \omega t})^{k-2}}.
    \label{eq:def-Jk}
\end{align}

\begin{prop}\label{prop:Ohno-function-integral}
    Let $\bk=(k_{1}, \ldots , k_{r})$ be an admissible index.
    Suppose that $0<\epsilon<1/2r\omega$.
    Then the infinite sum
    \begin{align*}
        O(\bk \,|\, \lambda, \mu)=\sum_{m, n\ge 0}
        O_{m, n}(\bk)
        \left(\frac{e^{2\pi i \omega \lambda}-1}{2\pi i \omega}\right)^{m}
        \left(\frac{e^{2\pi i \omega \mu}-1}{2\pi i \omega}\right)^{n}
    \end{align*}
    defines a holomorphic function
    in the region
    \begin{align}
        \{(\lambda, \mu) \in \mathbb{C}^{2}\mid
        |\lambda|<\epsilon/3\pi, |\mu|<\epsilon/3\pi \}
        \label{eq:Ohno-region}
    \end{align}
    and it holds that
    \begin{align}
        O(\bk \,|\, \lambda, \mu)=(2\pi i \omega)^{|\bk|}
        \prod_{a=1}^{r}\int_{-\epsilon+i\,\mathbb{R}}\frac{dt_{a}}{e^{2\pi i t_{a}}-1}
        \prod_{a=1}^{r}J_{k_{a}}(t_{1}+\cdots +t_{a} \,|\, \lambda, \mu).
        \label{eq:ohno-integral}
    \end{align}
\end{prop}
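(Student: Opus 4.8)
The plan is to substitute the integral representation \eqref{eq:omega-MZV-BZ} of $\zeta_\omega$ into the definition of $O_{m,n}(\bk)$, interchange the summation over the shift parameters with the $r$-fold contour integral, and evaluate the two resulting geometric series in closed form. The algebraic part of this is painless; the only real work is justifying the interchange, which I would do by a uniform estimate of the integrand along the contour.

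First I would carry out the formal computation. Set $T_a=t_1+\cdots+t_a$, $\xi=(e^{2\pi i\omega\lambda}-1)/2\pi i\omega$ and $\eta=(e^{2\pi i\omega\mu}-1)/2\pi i\omega$. Rewriting the double sum over $(m,n)$ as a sum over $(m_1,\dots,m_r,n_1,\dots,n_r)$ and plugging \eqref{eq:omega-MZV-BZ} with each $k_a$ replaced by $k_a+m_a+n_a$, one finds that $\sum_{m,n\ge0}O_{m,n}(\bk)\xi^m\eta^n$ equals, formally,
\begin{align*}
(2\pi i\omega)^{|\bk|}\prod_{a=1}^{r}\int_{-\epsilon+i\mathbb{R}}\frac{dt_a}{e^{2\pi it_a}-1}\,
\prod_{a=1}^{r}\left\{\frac{e^{2\pi i\omega(k_a-1)T_a}}{(1-e^{2\pi i\omega T_a})^{k_a}}
\sum_{m_a\ge0}w_a^{\,m_a}\sum_{n_a\ge0}\widetilde{w}_a^{\,n_a}\right\},
\end{align*}
where $w_a=(e^{2\pi i\omega\lambda}-1)e^{2\pi i\omega T_a}/(1-e^{2\pi i\omega T_a})$ and $\widetilde{w}_a$ is $w_a$ with $\mu$ in place of $\lambda$. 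Since $1+(e^{2\pi i\omega\lambda}-1)=e^{2\pi i\omega\lambda}$, one gets $\sum_{m_a\ge0}w_a^{m_a}=(1-e^{2\pi i\omega T_a})/(1-e^{2\pi i\omega(\lambda+T_a)})$, and likewise for $\widetilde w_a$; substituting, the braced factor collapses to precisely $J_{k_a}(T_a\,|\,\lambda,\mu)$ of \eqref{eq:def-Jk}, which is the asserted formula \eqref{eq:ohno-integral}.

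To make this rigorous I would verify the Tonelli hypothesis, namely that $\sum_{m_a,n_a\ge0}$ of the integral of the modulus of the integrand is finite, which simultaneously gives absolute convergence of the double series and licenses the interchange. On the contour, $\operatorname{Re}T_a=-a\epsilon$, hence $0<2\pi\omega a\epsilon<\pi$ for every $a\le r$ because $\epsilon<1/2r\omega$. Writing $2\pi i\omega T_a=x_a+iy_a$ and using $e^{(x_a-|x_a|)/2}\le1$, Lemma \ref{lem:estimate-below} (with $c(-y)=c(y)$) gives $|w_a|\le|e^{2\pi i\omega\lambda}-1|/c(2\pi\omega a\epsilon)$ \emph{uniformly} in $\operatorname{Im}t_1,\dots,\operatorname{Im}t_r$, and similarly for $|\widetilde w_a|$. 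A short estimate with $|e^{z}-1|\le|z|e^{|z|}$ and $\sin x\ge(2/\pi)x$ on $[0,\pi/2]$ shows that, when $|\lambda|,|\mu|<\epsilon/3\pi$, these ratios are at most $\tfrac12$ for all $a$; this is what pins down the radius $\epsilon/3\pi$ in \eqref{eq:Ohno-region}. Each geometric series thus converges, and after summation the modulus of the integrand of \eqref{eq:ohno-integral} is dominated by $4^{r}$ times the modulus of the integrand of $\zeta_\omega(\bk)$ in \eqref{eq:omega-MZV-BZ}, which is absolutely integrable because $\bk$ is admissible. Hence Tonelli applies, Fubini validates the computation above, and \eqref{eq:ohno-integral} holds.

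This same dominating bound gives $\sum_{m,n\ge0}|O_{m,n}(\bk)|\,\rho^{m+n}<\infty$ for a fixed $\rho>0$ independent of $(\lambda,\mu)$ (any $\rho$ with $2\pi\omega\rho<\tfrac12\min_{1\le a\le r}c(2\pi\omega a\epsilon)$ works), while on the region \eqref{eq:Ohno-region} one has $|\xi|,|\eta|<\rho$; since each term of the series defining $O(\bk\,|\,\lambda,\mu)$ is entire in $(\lambda,\mu)$, the series converges uniformly on \eqref{eq:Ohno-region} and its sum is holomorphic there. I expect the main obstacle to be exactly the uniform estimate of $|w_a|,|\widetilde w_a|$: the geometric ratios must be bounded away from $1$ along the whole noncompact contour and for all $a$ at once, which forces one to treat separately the range $2\pi\omega a\epsilon\in(\pi/2,\pi)$, where $c\equiv1$ rather than $|\sin|$. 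Everything else reduces to the single estimate ``$4^{r}\times$ the integrand of $\zeta_\omega(\bk)$''.
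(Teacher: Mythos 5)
Your proposal is correct and follows essentially the same route as the paper's proof: substitute \eqref{eq:omega-MZV-BZ} into $O_{m,n}(\bk)$, bound the geometric ratios $|w_a|$, $|\widetilde{w}_a|$ away from $1$ uniformly along the contours via Lemma \ref{lem:estimate-below} combined with an upper bound on $|e^{2\pi i \omega \lambda}-1|$ (the paper's Lemma \ref{lem:estimate-above} plays the role of your $|e^{z}-1|\le |z|e^{|z|}$ estimate), invoke Tonelli--Fubini, and sum the geometric series to produce $J_{k_a}$. The only (harmless) variations are that you dominate the partial sums by $4^{r}$ times the integrand of $\zeta_{\omega}(\bk)$, whose integrability is already known from Proposition \ref{prop:integral-convergent}, whereas the paper dominates by the integrand of \eqref{eq:ohno-integral} and checks its integrability via Lemma \ref{lem:J-estimate}; and you obtain holomorphy by a Weierstrass $M$-test with a radius $\rho$ independent of $(\lambda,\mu)$, whereas the paper differentiates under the integral sign and estimates $\partial J_{k}/\partial\lambda$ and $\partial J_{k}/\partial\mu$.
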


See Appendix \ref{sec:app2} for the proof of
Proposition \ref{prop:Ohno-function-integral}.

\subsection{Hyperbolic gamma function}

In the proof of Theorem \ref{thm:extended-double-Ohno},
we use the hyperbolic gamma function due to Ruijsenaars \cite{R1}.
Here we summarize some of its properties.
See \cite[Appendix A]{R2} for details.

The hyperbolic gamma function $G(z\,|\,\omega_{1}, \omega_{2})$ has
two complex parameters $\omega_{1}, \omega_{2}$
whose real parts are positive.
In this paper we use it with $(\omega_{1}, \omega_{2})=(1, 1/\omega)$
and write $G(z)=G(z\,|\,1, 1/\omega)$ for simplicity.
We set
\begin{align*}
    \underline{\omega}=\frac{1}{2}\left(1+\frac{1}{\omega}\right).
\end{align*}
The function $G(z)$ is a meromorphic function on $\mathbb{C}$ satisfying
\begin{align}
    G(z)G(-z)=1
    \label{eq:G-inverse}
\end{align}
and
\begin{align}
     &
    G(z+i)=-2i
    \sinh{(\pi \omega (z+i\underline{\omega}))}G(z),
    \label{eq:G-property}
    \\
     &
    G(z+i/\omega)=-2i
    \sinh{(\pi (z+i\underline{\omega}))}G(z).
    \nonumber
\end{align}
The poles of $G(z)$ are at
$-i\underline{\omega}+i\mathbb{Z}_{\le 0}+
    (i/\omega)\mathbb{Z}_{\le 0}$ and
the zeroes are at
$i\underline{\omega}+i\mathbb{Z}_{\ge 0}+
    (i/\omega)\mathbb{Z}_{\ge 0}$.
In the region $|\mathrm{Im}z|<\underline{\omega}$,
it holds that
\begin{align*}
    G(z)=\exp{
        \left(i \int_{0}^{\infty}\frac{dt}{t}
        \left( \frac{\sin{2\omega t z}}{2\sinh{\omega t}\sinh{t}}-
        \frac{z}{t}
        \right)
        \right)
    }.
\end{align*}

Let $K$ be a compact subset of $\mathbb{R}$.
Then we have
\begin{align}
    G(z) \exp{(\frac{\pi i \omega}{2}z^{2}+\frac{\pi i}{24}(\omega+\frac{1}{\omega}))}
    \to 1 \qquad
    (\mathop{\mathrm{Im}}{z}\in K, \, \mathop{\mathrm{Re}}{z} \to +\infty).
    \label{eq:G-limit}
\end{align}
There exist positive constants $M$ and $M'$ which depend on
$\omega$ and $K$ such that
\begin{align}
    |G(z)|
     & \le
    M\left|\exp{(\mp \frac{\pi i \omega}{2}z^{2})} \right|,
    \label{eq:G-estimate0} \\
    \left|\frac{G'(z)}{G(z)}\right|
     & \le
    M'(|z|^{2}+1)
    \label{eq:G'/G-estimate}
\end{align}
if $\pm \mathrm{Re}z>\max(1, 1/\omega)$ and $\mathrm{Im}z\in K$.
Hence, if $K$ is a closed interval contained in
$(-\underline{\omega}, \underline{\omega})$,
we can take a positive constant $M$ so that
\begin{align}
    |G(z)|\le M e^{\pi \omega |\mathop{\mathrm{Re}{z}}| \mathop{\mathrm{Im}}{z}}
    \label{eq:G-estimate}
\end{align}
in the strip
$\{z \in \mathbb{C} \mid \mathop{\mathrm{Im}}{z}\in K\}$.

\subsection{Connected integral}

For indices $\bk=(k_{1}, \ldots, k_{r})$ and
$\bl=(l_{1}, \ldots , l_{s})$,
we set
\begin{align}
     & \label{eq:def-connector}
    \mathcal{I}(\bk, \bl \, |\, \lambda, \mu)=
    (2\pi i \omega)^{|\bk|+|\bl|}
    \prod_{a=1}^{r}\int_{-\epsilon+i\,\mathbb{R}}\frac{dt_{a}}{e^{2\pi i t_{a}}-1}
    \prod_{b=1}^{s}\int_{-\epsilon+i\,\mathbb{R}}\frac{du_{b}}{e^{2\pi i u_{b}}-1} \\
     & \nonumber
    \times
    \prod_{a=1}^{r-1}J_{k_{a}}(t_{1}+\cdots +t_{a} \,|\,\lambda, \mu)
    \prod_{b=1}^{s-1}J_{l_{b}}(u_{1}+\cdots +u_{b} \,|\,\lambda, \mu)              \\
     & \nonumber
    \times
    \left(\frac{e^{2\pi i \omega [t]}}{1-e^{2\pi i \omega [t]}}\right)^{k_{r}-1}
    \left(\frac{e^{2\pi i \omega [u]}}{1-e^{2\pi i \omega [u]}}\right)^{l_{s}-1}
    \Theta([t], [u], \lambda, \mu),
\end{align}
where $J_{k}(t\,|\,\lambda, \mu)$ is defined by \eqref{eq:def-Jk},
$[t]=\sum_{a=1}^{r}t_{a}, [u]=\sum_{b=1}^{s}u_{b}$, and
the function $\Theta(t, u, \lambda, \mu)$ is defined by
\begin{align*}
    \Theta(t, u, \lambda, \mu) & =
    \frac{e^{-\pi i \omega tu}}{1-e^{2\pi i \omega (t+u+\lambda+\mu)}} \\
                               & \times
    \frac{G(i(\underline{\omega}+t))G(i(\underline{\omega}+u))
        G(i(\underline{\omega}+t+u+\lambda+\mu))}
    {G(i(\underline{\omega}+t+\lambda))G(i(\underline{\omega}+t+\mu))
        G(i(\underline{\omega}+u+\lambda))G(i(\underline{\omega}+u+\mu))}.
\end{align*}
Note that
$\mathcal{I}(\bk, \bl\,|\,\lambda, \mu)=\mathcal{I}(\bl, \bk\,|\,\lambda, \mu)$
for any indices $\bk$ and $\bl$
since $\Theta(t, u, \lambda, \mu)$ is symmetric in
$t$ and $u$.

\begin{prop}\label{prop:connector-convergence}
    Let $\bk=(k_{1}, \ldots , k_{r}), \bl=(l_{1}, \ldots , l_{s})$ be indices
    and suppose that
    $0<(r+s+2)\epsilon<\min{(1, 1/\omega)}$.
    Then the integral $\mathcal{I}(\bk, \bl \, |\, \lambda, \mu)$ is
    absolutely convergent and defines a holomorphic function
    on the region
    $B=\left\{ (\lambda, \mu) \in \mathbb{C}^{2}\mid
        |\lambda|<\epsilon, |\mu|<\epsilon \right\}$.
\end{prop}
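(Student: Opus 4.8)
The plan is to reduce the claimed absolute convergence and holomorphy of $\mathcal{I}(\bk, \bl\,|\,\lambda, \mu)$ to an exponential decay estimate on the integrand along the product of vertical contours $-\epsilon + i\,\mathbb{R}$, uniformly for $(\lambda,\mu)$ in a slightly smaller polydisc than $B$. First I would fix notation: write $t_a = -\epsilon + i v_a$, $u_b = -\epsilon + i w_b$, and set $V = \sum_{a=1}^r v_a$, $W = \sum_{b=1}^s w_b$ so that $\mathop{\mathrm{Im}}[t] = V$, $\mathop{\mathrm{Im}}[u] = W$. The integrand is a product of three kinds of factors: (1) the Cauchy kernels $1/(e^{2\pi i t_{j,a}}-1)$, which by Proposition \ref{prop:I-estimate} are bounded by $c(-2\pi\epsilon)^{-1} e^{\pi(v-|v|)}$, contributing a factor $\exp(\pi\sum(v_a - |v_a|) + \pi\sum(w_b - |w_b|))$; (2) the $J_k$-factors and the geometric ``$e^{2\pi i\omega[\cdot]}/(1-e^{2\pi i\omega[\cdot]})$'' factors, which are of the same type as $I_\omega(g_k\,|\,\cdot)$ and are estimated just as in Proposition \ref{prop:I-estimate} and the proof of Proposition \ref{prop:integral-convergent}, producing negative-definite contributions in the partial sums of the $v_a$ and $w_b$; and (3) the new $\Theta$-factor built from products of hyperbolic gamma functions $G$.

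The key step, and the one that is genuinely new here, is to estimate $\Theta([t],[u],\lambda,\mu)$. Using \eqref{eq:G-estimate} — valid because the imaginary parts of the arguments $i(\underline{\omega}+ \cdots)$ are $\underline{\omega}$ plus small quantities of order $\epsilon$, hence lie in a compact subinterval of $(-\underline{\omega},\underline{\omega})$ once $(r+s+2)\epsilon < \min(1,1/\omega)$ and $(\lambda,\mu)$ is in a compact subset of $B$ — each $G$ in the numerator is bounded by $M e^{\pi\omega|\mathop{\mathrm{Re}}\,\cdot|\,\mathop{\mathrm{Im}}\,\cdot}$ and each $G$ in the denominator is bounded below via \eqref{eq:G-inverse} (so $1/G(z) = G(-z)$ is bounded the same way). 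The crucial cancellation is that $\mathop{\mathrm{Re}}\,G$-argument is, e.g., $-(V)$ (resp. $-W$, resp. $-(V+W)$ up to $O(\epsilon)$ perturbations from $\lambda,\mu$), the imaginary part is $\underline{\omega} + O(\epsilon)$, so each such factor contributes $\exp(\pm\pi\omega\underline{\omega}|V|)$ etc. Summing the six exponents one finds the quadratic-in-$|V|$, $|W|$ terms are absent (they would have been if we had used \eqref{eq:G-estimate0}, but \eqref{eq:G-estimate} is linear) and the linear terms from numerator and denominator partially cancel; what remains is at most a constant multiple of $|V| + |W|$, which is dominated by the strictly negative contributions from the $J$-factors and geometric factors exactly as $S(u)$ was controlled in the proof of Proposition \ref{prop:integral-convergent}. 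The prefactor $e^{-\pi i\omega tu}$ evaluated at $t=[t], u=[u]$ and the denominator $1 - e^{2\pi i\omega([t]+[u]+\lambda+\mu)}$ contribute only bounded or mildly growing factors once one notes $\mathop{\mathrm{Re}}(-\pi i \omega[t][u]) = -\pi\omega(\epsilon W + \epsilon V - \epsilon^2)$, linear in $V,W$ with small coefficient $\epsilon$, and is again absorbed.

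Having obtained a bound $|F(t,u)| \le C \exp(\pi\, S(v,w))$ with $S(v,w) \le (|1-\omega|-1)\bigl(\sum|v_a| + \sum|w_b|\bigr) + (\text{lower-order linear in } V, W)$ and $|1-\omega| - 1 < 0$ by $0 < \omega < 2$ — with $C$ uniform over a compact subset of $B$ — absolute convergence of the $(r+s)$-fold integral follows immediately from Fubini/Tonelli, and independence of the precise $\epsilon$ in the allowed range follows (as in Proposition \ref{prop:integral-convergent}) from Cauchy's theorem, since shifting any contour sweeps no poles given the constraint $(r+s+2)\epsilon<\min(1,1/\omega)$. Holomorphy in $(\lambda,\mu)$ on $B$ then follows by Morera's theorem together with the uniform local bound, or equivalently by differentiating under the integral sign using \eqref{eq:G'/G-estimate} to control $\partial_\lambda, \partial_\mu$ of the integrand. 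The main obstacle I anticipate is bookkeeping in the $\Theta$-estimate: one must track carefully which $G$-arguments have real parts $\sim -V$, $\sim -W$, or $\sim -(V+W)$ (and the sign conventions coming from $G(z)G(-z)=1$ when a $G$ sits in a denominator), and verify that after summing the six $\pm\pi\omega\underline{\omega}|\cdot|$ contributions plus the $e^{-\pi i\omega[t][u]}$ and denominator contributions, nothing survives that could overwhelm the $(|1-\omega|-1)\sum|v_a| + (|1-\omega|-1)\sum|w_b|$ decay; a clean way to organize this is to first treat $\lambda=\mu=0$, where the cancellations are exact, and then note the $(\lambda,\mu)$-dependence only perturbs real parts of arguments by $O(\epsilon)$ and so only shifts coefficients of the already-subdominant linear terms.
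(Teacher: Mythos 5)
Your toolbox is the right one (the bounds \eqref{eq:G-estimate} and $G(z)G(-z)=1$ for the hyperbolic gamma factors, Proposition \ref{prop:I-estimate} for the Cauchy kernels and $J_k$-factors), but there is a genuine gap in where you locate the decay. You treat the $\Theta$-factor as a nuisance whose residual growth, ``at most a constant multiple of $|V|+|W|$,'' is to be \emph{dominated by} the negative contributions of the $J$-factors and geometric factors ``exactly as in Proposition \ref{prop:integral-convergent}.'' That cannot work here: in Proposition \ref{prop:integral-convergent} the decay in the positive imaginary direction came from the rightmost letter being some $g_k$ (the admissibility hypothesis $w_3\in\mathcal{M}_0\setminus\{1\}$), whereas in $\mathcal{I}(\bk,\bl\,|\,\lambda,\mu)$ the indices $\bk,\bl$ are \emph{arbitrary}. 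If every $k_a=l_b=1$ (precisely the case $\mathcal{I}(\bk_{\rightarrow},\{1\}\,|\,\lambda,\mu)$ needed for Theorems \ref{thm:DO-initial} and \ref{thm:DO-transport}), then each $J_1$ is merely bounded and the geometric factors are trivial, so the $J$-part supplies \emph{no} decay whatsoever; the Cauchy kernels decay only as $\mathop{\mathrm{Im}}t_{a}\to-\infty$. Moreover, if the $\underline{\omega}$-order terms in the $G$-ratio only ``partially cancel,'' the leftover would be of size $\pi\omega\underline{\omega}(|V|+|W|)=\tfrac{\pi(\omega+1)}{2}(|V|+|W|)$, which no available decay could absorb. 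So with the accounting as you describe it, the estimate does not close.

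The point you are missing is that $\Theta$ is not a nuisance but the \emph{source} of the decay in the positive direction. Writing $[t]=-r\epsilon+iV$, $[u]=-s\epsilon+iW$, the $\underline{\omega}$-order terms in the seven $G$-factors cancel exactly up to the nonpositive quantity $\underline{\omega}(|V+W|-|V|-|W|)$, and the surviving $O(\epsilon)$-order terms combine with the prefactor $e^{-\pi i\omega[t][u]}$ (whose real exponent is $-\pi\omega(s\epsilon V+r\epsilon W)$, not $-\pi\omega(\epsilon V+\epsilon W-\epsilon^{2})$ as you wrote) to give the two-sided bound
\begin{align*}
    |\Theta([t],[u],\lambda,\mu)|\le \mathrm{const}\cdot
    \exp\bigl(-\pi\omega\bigl(s\epsilon(V+|V|)+r\epsilon(W+|W|)\bigr)\bigr),
\end{align*}
which is the paper's estimate \eqref{eq:theta-estimate}. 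Only after adding this to the Cauchy-kernel exponent $\sum_a(x_a-|x_a|)+\sum_b(y_b-|y_b|)$ does one get a total exponent bounded by $(|1-\omega\epsilon|-1)\bigl(\sum_a|x_a|+\sum_b|y_b|\bigr)$ with $|1-\omega\epsilon|-1<0$ (this is where the hypothesis $(r+s+2)\epsilon<\min(1,1/\omega)$ enters); note the coefficient is $|1-\omega\epsilon|-1$, not $|1-\omega|-1$. Your closing remarks about Morera's theorem and \eqref{eq:G'/G-estimate} for holomorphy are fine once the integrable majorant is in hand, but the majorant itself requires carrying out the $\Theta$-computation exactly rather than estimating it away.
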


See Appendix \ref{sec:app2} for the proof of
Proposition \ref{prop:connector-convergence}.

For an index $\bk=(k_{1}, \ldots , k_{r})$, we set
\begin{align*}
    \bk_{\uparrow}=(k_{1}, \ldots , k_{r-1}, k_{r}+1).
\end{align*}
Note that $\bk_{\uparrow}$ is admissible for any index $\bk$.
The integral $\mathcal{I}(\bk,\bl\,|\,\lambda,\mu)$ is related to
the generating function $O(\bk\,|\,\lambda,\mu)$ of the double Ohno sums
as follows.

\begin{thm}\label{thm:DO-initial}
    For any index $\bk$, it holds that
    \begin{align*}
        \mathcal{I}(\bk, \{1\}\,|\,\lambda, \mu)=
        d(\lambda, \mu) O(\bk_{\uparrow}\,|\,\lambda, \mu),
    \end{align*}
    where
    \begin{align}
        d(\lambda, \eta)=\frac{i}{\sqrt{\omega}}e^{\pi i \omega(\lambda+\mu-\lambda\mu)}
        G(i(\underline{\omega}-\lambda-1/\omega))
        G(i(\underline{\omega}-\mu-1/\omega)).
        \label{eq:d-normalization}
    \end{align}
\end{thm}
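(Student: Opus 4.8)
The plan is to reduce the assertion to a one-variable integral evaluation and then to carry out that evaluation by residue calculus together with the functional equations of the hyperbolic gamma function. First I would specialize $\bl=\{1\}$ in \eqref{eq:def-connector}, so that $s=1$, $l_{1}=1$, only the single variable $u=u_{1}$ (with $[u]=u$) remains, the product $\prod_{b=1}^{s-1}J_{l_{b}}$ is empty, and the factor $(e^{2\pi i\omega[u]}/(1-e^{2\pi i\omega[u]}))^{l_{s}-1}$ equals $1$. The remaining integrand in the variables $t_{1},\ldots,t_{r}$ is then exactly the one occurring in the integral representation \eqref{eq:ohno-integral} of $O(\bk_{\uparrow}\,|\,\lambda,\mu)$ provided by Proposition \ref{prop:Ohno-function-integral}, except that its last factor $J_{k_{r}+1}([t]\,|\,\lambda,\mu)$ is replaced by
\[
\left(\frac{e^{2\pi i\omega[t]}}{1-e^{2\pi i\omega[t]}}\right)^{k_{r}-1}
\int_{-\epsilon+i\mathbb{R}}\frac{du}{e^{2\pi i u}-1}\,\Theta([t],u,\lambda,\mu).
\]
Comparing this with the explicit formula \eqref{eq:def-Jk} for $J_{k_{r}+1}$, the factors $(1-e^{2\pi i\omega[t]})^{k_{r}-1}$ and $e^{2\pi i\omega(k_{r}-1)[t]}$ cancel, and Theorem \ref{thm:DO-initial} becomes equivalent to the $k_{r}$-independent identity
\begin{align}
\int_{-\epsilon+i\mathbb{R}}\frac{du}{e^{2\pi i u}-1}\,\Theta([t],u,\lambda,\mu)=
\frac{d(\lambda,\mu)\,e^{2\pi i\omega[t]}}{(1-e^{2\pi i\omega(\lambda+[t])})(1-e^{2\pi i\omega(\mu+[t])})}
\label{eq:DO-initial-key}
\end{align}
for $[t]$ on the relevant vertical line and $(\lambda,\mu)\in B$; the interchange of the $u$-integral with the $t_{a}$-integrals is legitimate by the absolute convergence established in Proposition \ref{prop:connector-convergence}.

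To prove \eqref{eq:DO-initial-key}, I would pull out the purely $[t]$-dependent factor $G(i(\underline{\omega}+[t]))/\bigl(G(i(\underline{\omega}+[t]+\lambda))G(i(\underline{\omega}+[t]+\mu))\bigr)$ of $\Theta$, so that the left-hand side becomes this factor times a hyperbolic Barnes-type integral in $u$. Using the asymptotics \eqref{eq:G-estimate0} — and observing that the quadratic-in-$u$ terms in the exponents cancel in the ratio $G(i(\underline{\omega}+u))G(i(\underline{\omega}+[t]+u+\lambda+\mu))/\bigl(G(i(\underline{\omega}+u+\lambda))G(i(\underline{\omega}+u+\mu))\bigr)$ — together with elementary bounds for the remaining $\sinh$ and exponential factors, one checks that the integrand decays as $\mathrm{Re}\,u\to+\infty$ inside any horizontal strip; hence the contour $-\epsilon+i\mathbb{R}$ may be pushed off to the right, the translated integral tending to $0$. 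By \eqref{eq:G-inverse} the poles of $1/(e^{2\pi i u}-1)$ at $u\in\mathbb{Z}_{\ge 1}$ are cancelled by zeros of $G(i(\underline{\omega}+u))$, and the poles of $1/(1-e^{2\pi i\omega([t]+u+\lambda+\mu)})$ by zeros of $G(i(\underline{\omega}+[t]+u+\lambda+\mu))$, so the only residues picked up come from the poles of $1/G(i(\underline{\omega}+u+\lambda))=G(-i(\underline{\omega}+u+\lambda))$ and of $1/G(i(\underline{\omega}+u+\mu))$, located at $u\in-\lambda+\mathbb{Z}_{\ge 0}+(1/\omega)\mathbb{Z}_{\ge 0}$ and $u\in-\mu+\mathbb{Z}_{\ge 0}+(1/\omega)\mathbb{Z}_{\ge 0}$ respectively. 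Each residue is computed by repeated use of the functional equations \eqref{eq:G-property}, which rewrite the relevant $G$-values near such a pole as explicit products of $\sinh$'s (equivalently of factors $1-e^{2\pi i(\cdots)}$) times a single residual $G$-value; the two resulting series then resum, again via \eqref{eq:G-property}, to $G(i(\underline{\omega}+[t]+\lambda))$, $G(i(\underline{\omega}+[t]+\mu))$ and to the geometric factors $(1-e^{2\pi i\omega(\lambda+[t])})^{-1}$ and $(1-e^{2\pi i\omega(\mu+[t])})^{-1}$. Reinstating the $[t]$-dependent prefactor and collecting the constants produces the right-hand side of \eqref{eq:DO-initial-key}, the normalization $d(\lambda,\mu)$ of \eqref{eq:d-normalization} being assembled from the surviving $G$-values, from the constant $i/\sqrt{\omega}$, and from the exponential $e^{\pi i\omega(\lambda+\mu-\lambda\mu)}$ built up out of the factor $e^{-\pi i\omega[t]u}$ and the $\sinh$-exponents. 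Since both sides of \eqref{eq:DO-initial-key} are holomorphic on $B$ — the left side by Proposition \ref{prop:connector-convergence} — it suffices to verify the identity on any nonempty open subset of $B$.

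The step I expect to be the main obstacle is this resummation, and within it the precise bookkeeping of the scalar factor: one must show that the two residue series resum to exactly the product of hyperbolic gamma values and geometric factors on the right of \eqref{eq:DO-initial-key}, and not merely to something proportional to it, so that in particular the constant $i/\sqrt{\omega}$ and the exponential $e^{\pi i\omega(\lambda+\mu-\lambda\mu)}$ of \eqref{eq:d-normalization} emerge with exactly the right coefficients. Subsidiary difficulties are the careful justification of the horizontal contour shift — keeping the imaginary parts of the arguments of $G$ inside the strip $(-\underline{\omega},\underline{\omega})$ where the estimate \eqref{eq:G-estimate} applies, and checking that the residual $\sinh$ and geometric factors do not spoil the decay — and the systematic accounting of which poles remain after the cancellations between $1/(e^{2\pi i u}-1)$, $1/(1-e^{2\pi i\omega([t]+u+\lambda+\mu)})$ and the zeros of the two numerator gamma factors.
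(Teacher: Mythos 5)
Your reduction of the theorem to the single $u$-integral identity
\begin{align*}
\int_{-\epsilon+i\,\mathbb{R}}\frac{du}{e^{2\pi i u}-1}\,\Theta(t,u,\lambda,\mu)=
d(\lambda,\mu)\,\frac{e^{2\pi i\omega t}}{(1-e^{2\pi i\omega(\lambda+t)})(1-e^{2\pi i\omega(\mu+t)})}
\end{align*}
is correct and coincides with the first step of the paper's proof (after an application of \eqref{eq:G-property} the same inner integral appears there as the function $H(t\,|\,\lambda,\mu)$). The gap lies entirely in your proposed evaluation of this one-variable integral by residue calculus.

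First, the contour $-\epsilon+i\,\mathbb{R}$ cannot be ``pushed off to the right'' as you claim. The asymptotics \eqref{eq:G-limit} and the bounds \eqref{eq:G-estimate0}, \eqref{eq:G-estimate} hold when the \emph{real} part of the argument of $G$ tends to infinity with bounded imaginary part; since the arguments occurring in $\Theta$ have the form $i(\underline{\omega}+u+\cdots)$, that is the regime $|\mathop{\mathrm{Im}}u|\to\infty$, i.e.\ motion \emph{along} the vertical contour (this is exactly how Proposition \ref{prop:connector-convergence} is proved). Translating the contour to $\mathop{\mathrm{Re}}u=N\to+\infty$ instead sends the arguments of $G$ in the direction $i\cdot(+\infty)$, where the zero and pole lattices of $G$ accumulate and no plane-wave asymptotics is available; your cancellation of the quadratic exponents is not valid there, and in the gaps between consecutive poles the factors $1/(e^{2\pi iu}-1)$ and $1/(1-e^{2\pi i\omega(\cdots)})$ are $O(1)$ but not $o(1)$. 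So the translated integrals do not tend to zero, and no estimate available in the paper controls $G$ in that direction.

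Second, even granting a formal residue expansion, the poles of $1/G(i(\underline{\omega}+u+\lambda))=G(-i(\underline{\omega}+u+\lambda))$ lie on the two-dimensional lattice $u\in-\lambda+\mathbb{Z}_{\ge0}+(1/\omega)\mathbb{Z}_{\ge0}$ (likewise for $\mu$), so you face a double series for each of the two families. The first-order difference equations \eqref{eq:G-property} only relate consecutive terms; they show the series is of bibasic hypergeometric type but cannot resum it. Closing the sum requires a genuine nonterminating summation theorem, and for real $\omega$ --- where $|e^{2\pi i\omega}|=|e^{2\pi i/\omega}|=1$ --- even the absolute convergence of such a bilattice residue series is delicate. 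This is precisely why the paper does not argue by residues: it evaluates the inner integral in closed form via Proposition \ref{prop:saalschutz}, itself obtained as a limit of the hyperbolic nonterminating Saalsch\"utz formula \eqref{eq:GGG/GGG} of van de Bult--Rains--Stokman. That imported integral evaluation is the essential ingredient your argument lacks; without it (or an equivalent summation theorem) the normalization $d(\lambda,\mu)$, which you yourself flag as the main obstacle, cannot be produced.
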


To prove Theorem \ref{thm:DO-initial},
we use the following identity.

\begin{prop}\label{prop:saalschutz}
    For generic $(u_{1}, u_{2}, u_{4}, u_{5})\in\mathbb{C}^{4}$,
    it holds that
    \begin{align*}
         &
        \int_{C}du
        \exp{\left((4i\underline{\omega}-u_{1}-u_{2}-u_{4}-u_{5})\pi i \omega u\right)}
        \frac{G(u-u_{4})G(u-u_{5})}{G(u+u_{1})G(u+u_{2})}                                                  \\
         & =\frac{1}{\sqrt{\omega}}
        \exp{\left((u_{1}u_{2}-u_{4}u_{5}-i\underline{\omega}(u_{4}+u_{5}-u_{1}-u_{2}))\pi i\omega\right)} \\
         & \quad {}\times
        G(-3i\underline{\omega}+u_{1}+u_{2}+u_{4}+u_{5})
        \prod_{j=1}^{2}\prod_{k=4}^{5}G(i\underline{\omega}-u_{j}-u_{k}),
    \end{align*}
    where $C$ is a deformation of the real line $\mathbb{R}$
    separating the poles at
    $-u_{j}+i\underline{\omega}+i\mathbb{Z}_{\ge 0}+(i/\omega)\mathbb{Z}_{\ge 0}\,
        (j=1, 2)$ from
    the poles at
    $u_{k}-i\underline{\omega}-i\mathbb{Z}_{\ge 0}-(i/\omega)\mathbb{Z}_{\ge 0}\,
        (k=4, 5)$.
\end{prop}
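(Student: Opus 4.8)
The statement to prove is Proposition~\ref{prop:saalschutz}, a hyperbolic analogue of the Saalschütz summation / a Barnes-type integral evaluation for the hyperbolic gamma function.

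=== PROOF PROPOSAL ===

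\begin{proof}[Proof sketch for Proposition \ref{prop:saalschutz}]
The plan is to recognize the left-hand side as a specialization of the hyperbolic beta integral (the hyperbolic analogue of the Barnes first lemma / de Branges--Wilson integral) and to reduce it to a known evaluation via the shift equations \eqref{eq:G-property} and the reflection \eqref{eq:G-inverse}. Concretely, I would proceed as follows.

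First I would set up notation to match the standard hyperbolic beta integral. Recall (e.g. from Ruijsenaars' work \cite{R1} and \cite{R2}) that there is an evaluation of the form
\begin{align*}
\int_{C} du \, \prod_{j=1}^{4} G(u - a_{j})^{\pm 1}
\end{align*}
in terms of products of $G$ at the pairwise sums of the parameters, valid under a balancing condition on $\sum a_{j}$; the exponential prefactor $\exp((4i\underline{\omega}-u_{1}-u_{2}-u_{4}-u_{5})\pi i\omega u)$ on the left is precisely what is needed to make the integrand decay at both ends of the contour (using the Gaussian asymptotics \eqref{eq:G-limit}, \eqref{eq:G-estimate0}), so that the integral converges and can be closed. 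I would first check, using \eqref{eq:G-estimate0} with the $+$ and $-$ signs, that the integrand is $O(e^{-c|\mathrm{Re}\,u|})$ along $C$ for generic parameters, which both justifies absolute convergence and pins down the contour $C$ as the one separating the two indicated families of poles.

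The main computation is then a residue / contour-shift argument: shift the contour $C$ past one family of poles, say those at $u_{k} - i\underline{\omega} - i\mathbb{Z}_{\ge 0} - (i/\omega)\mathbb{Z}_{\ge 0}$, and sum the residues. Using the shift equations \eqref{eq:G-property}, each residue of $G(u-u_{4})$ (resp. $G(u-u_{5})$) at such a point is expressed via finite products of $\sinh$ factors times values of the other $G$'s; the resulting double sum over $\mathbb{Z}_{\ge 0}^{2}$ should telescope into the hyperbolic analogue of a terminating ${}_3\phi_2$ Saalschütz sum. Alternatively — and I think this is cleaner — I would derive the identity by a uniqueness argument: show that both sides, as functions of one parameter (say $u_{1}$), are meromorphic, have the same quasi-periodicity under $u_{1}\mapsto u_{1}+i$ and $u_{1}\mapsto u_{1}+i/\omega$ (checked by applying \eqref{eq:G-property} to every $G$-factor and to the integrand, shifting the contour accordingly and picking up the one crossed pole), have matching zeros/poles, and the same asymptotic behavior as $\mathrm{Re}\,u_{1}\to\pm\infty$ by \eqref{eq:G-limit}; a hyperbolic function with prescribed divisor and controlled growth is determined up to a Gaussian exponential, and matching one explicit value (e.g. a limit where the integral degenerates, or the point where two parameters collide and the integral picks up a single residue) fixes the constant and the exponential prefactor.

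The hard part will be the bookkeeping of the Gaussian exponential prefactors and the branch of $\sqrt{\omega}$: the quadratic-in-$u$ exponent $\frac{\pi i \omega}{2}z^2$ appearing in the asymptotics \eqref{eq:G-limit} of $G$ interacts with the linear exponent in $u$ in the integrand, and completing the square in the saddle-point / asymptotic analysis is what produces the explicit exponential $\exp((u_1u_2-u_4u_5-i\underline{\omega}(u_4+u_5-u_1-u_2))\pi i\omega)$ on the right-hand side together with the $1/\sqrt{\omega}$ normalization; getting every sign and every factor of $\underline{\omega}$ right here is delicate but routine. The pole/zero matching and quasi-periodicity checks are then mechanical applications of \eqref{eq:G-inverse} and \eqref{eq:G-property}. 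Since the identity is classical in the literature on hyperbolic hypergeometric integrals, I would, if a self-contained proof is too long, instead cite the relevant evaluation from \cite{R1} or \cite{R2} and merely verify that the parametrization here is the specialization claimed.
\end{proof}
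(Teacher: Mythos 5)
There is a genuine gap here: you correctly diagnose the identity as a degenerate hyperbolic beta/Saalsch\"utz-type evaluation, and your observation that the linear exponential prefactor is exactly what survives from the Gaussian asymptotics \eqref{eq:G-limit} of the discarded $G$-factors is precisely the mechanism at work --- but none of your three proposed routes is actually carried out, and the one concrete input you offer (``cite the relevant evaluation from \cite{R1} or \cite{R2}'') would not close the argument, because the needed formula is not in Ruijsenaars' papers. The paper's proof starts from the hyperbolic analogue of the \emph{non-terminating} Saalsch\"utz formula of van de Bult--Rains--Stokman \cite[Corollary 4.6]{BRS}, a balanced $3{+}3$ integral $\int_{C_{0}}\prod_{k=4}^{6}G(u-u_{k})/\prod_{j=1}^{3}G(u+u_{j})\,du$ with $\sum_{j=1}^{6}u_{j}=4i\underline{\omega}$, and then sends the pair $(u_{3},u_{6})$ to infinity along a ray (with a compensating exponential $e^{\psi(u_{3},u_{6},\eta)}$, $\eta\to+\infty$), using \eqref{eq:G-estimate0} to get an $\eta$-uniform integrable bound and the dominated convergence theorem to pass to the limit under the integral; the ratio $G(u-u_{6}+\eta)/G(u+u_{3}+\eta)$ then degenerates via \eqref{eq:G-inverse} and \eqref{eq:G-limit} into exactly the linear exponential on your left-hand side. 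Identifying \emph{which} known identity to degenerate, and justifying the interchange of limit and integral, is the entire content of the proof, and both are missing from your sketch.

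Your two alternative strategies are not wrong in spirit but each has an unaddressed obstruction. The residue-sum approach produces a double series over $i\mathbb{Z}_{\ge 0}+(i/\omega)\mathbb{Z}_{\ge 0}$ whose resummation into the right-hand side is essentially equivalent to the identity being proved, so nothing is gained without further input. The uniqueness-via-quasi-periodicity approach requires a Liouville-type statement for functions quasi-periodic with respect to the two incommensurate periods $i$ and $i/\omega$ together with growth control; this is a standard but genuinely nontrivial lemma that you would need to state and prove (or locate), and the ``matching one explicit value'' step still requires an independent evaluation at some special point. In short: the proposal is a plausible research plan, not a proof, and the decisive step --- degeneration of the BRS non-terminating Saalsch\"utz integral with a dominated-convergence justification --- needs to be supplied.
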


\begin{proof}
    By using \eqref{eq:G-estimate0} and \eqref{eq:G'/G-estimate},
    we see that the both hand sides of the desired equality
    are holomorphic at generic $(u_{1}, u_{2}, u_{4}, u_{5})$.
    Therefore it is enough to prove it under the assumption
    $\mathop{\mathrm{Im}}{(u_{1}+u_{2}+u_{4}+u_{5})}>4\underline{\omega}$.

    We start from a hyperbolic analogue of non-terminating Saalsch\"utz formula
    \cite[Corollary 4.6]{BRS}:
    \begin{align}
        \int_{C_{0}}du \,
        \frac{\prod_{k=4}^{6}G(u-u_{k})}{\prod_{j=1}^{3}G(u+u_{j})}=
        \frac{1}{\sqrt{\omega}}
        \prod_{j=1}^{3}\prod_{k=4}^{6}
        G(i\underline{\omega}-u_{j}-u_{k}).
        \label{eq:GGG/GGG}
    \end{align}
    This equality holds for generic $(u_{1}, \ldots , u_{6})\in\mathbb{C}^{6}$ satisfying
    $\sum_{j=1}^{6}u_{j}=4i\underline{\omega}$.
    The contour $C_{0}$ is a deformation of the real line $\mathbb{R}$
    separating the poles at
    $-u_{j}+i\underline{\omega}+i\mathbb{Z}_{\ge 0}+(i/\omega)\mathbb{Z}_{\ge 0}\,
        (j=1, 2, 3)$ from
    the poles at
    $u_{k}-i\underline{\omega}-i\mathbb{Z}_{\ge 0}-(i/\omega)\mathbb{Z}_{\ge 0}\,
        (k=4, 5, 6)$.

    Take $u_{3}$ and $u_{6}$ satisfying
    $\sum_{j=1}^{6}u_{j}=4i\underline{\omega}, \,
        \mathop{\mathrm{Im}}{(u_{3}+u_{6})}<0$ and
    $\mathop{\mathrm{Im}}{(i\underline{\omega}-u_{j})}>0 \,
        (j=3, 6)$.
    Suppose that $\eta>0$ and set
    \begin{align*}
        \psi(u_{3}, u_{6}, \eta)=-\pi i \omega(u_{3}+u_{6})(\eta+\frac{u_{3}-u_{6}}{2}).
    \end{align*}
    From \eqref{eq:GGG/GGG} we see that
    \begin{align}
         &
        \int_{C_{0}}du \,
        \frac{\prod_{k=4,5}G(u-u_{k})}{\prod_{j=1, 2}G(u+u_{j})}\,
        e^{\psi(u_{3}, u_{6}, \eta)}
        \frac{G(u-u_{6}+\eta)}{G(u+u_{3}+\eta)}
        \label{eq:proof-GG/GG}      \\
         & =\frac{1}{\sqrt{\omega}}
        e^{\psi(u_{3}, u_{6}, \eta)}
        \prod_{j=1, 2}G(i\underline{\omega}-u_{j}-u_{6}+\eta)
        \prod_{k=4, 5}G(i\underline{\omega}-u_{3}-u_{k}-\eta)
        \nonumber                   \\
         & \quad {}\times
        G(-3i\underline{\omega}+u_{1}+u_{2}+u_{4}+u_{5})
        \prod_{j=1, 2}\prod_{k=4, 5}G(i\underline{\omega}-u_{j}-u_{k}).
        \nonumber
    \end{align}
    Note that, for large enough $\eta$,
    the contour $C_{0}$ is independent of $\eta$.
    Denote by $F(u)$ the integrand in the left hand side.
    Using \eqref{eq:G-estimate0} and
    $\mathop{\mathrm{Im}}(u_{3}+u_{6})<0$, we see that
    there exists a constant $\widetilde{M}$ which is independent of $\eta$ such that
    \begin{align*}
        |F(u)|\le \widetilde{M}
        \exp{(\mp 2\pi (1+\omega) \mathop{\mathrm{Re}}{u})} \qquad
        (u\in C_{0}, \, \mathop{\mathrm{Re}}{u}\gtrless A),
    \end{align*}
    where $A=\max{(\mathop{\mathrm{Re}}{u_{4}}, \mathop{\mathrm{Re}}{u_{5}},
            {}-\mathop{\mathrm{Re}}{u_{1}}, {}-\mathop{\mathrm{Re}}{u_{2}})}+
        \max{(1, 1/\omega)}$.
    Therefore, we can apply the dominated convergence theorem
    to the integral \eqref{eq:proof-GG/GG} and take the limit
    as $\eta \to +\infty$.
    Then we get the desired equality by using
    \eqref{eq:G-inverse} and \eqref{eq:G-limit}.
\end{proof}

\begin{proof}[Proof of Theorem \ref{thm:DO-initial}]
    By using \eqref{eq:G-property}, we see that
    \begin{align}
         &
        \mathcal{I}(\bk, \{1\}\,|\,\lambda, \mu)
        \label{eq:DO-initial-pf1}        \\
         & =
        (2\pi i \omega)^{|\bk|+1}
        \prod_{a=1}^{r}\int_{-\epsilon+i\,\mathbb{R}}
        \frac{dt_{a}}{e^{2\pi i t_{a}}-1}
        \prod_{a=1}^{r-1}J_{k_{a}}(t_{1}+\cdots +t_{a}\,|\,\lambda,\mu)
        \left(\frac{e^{2\pi i \omega [t]}}{1-e^{2\pi i \omega [t]}}\right)^{k_{r}-1}
        \nonumber                        \\
         & \qquad \qquad \qquad {}\times
        \frac{G(i(\underline{\omega}+[t]))}
        {G(i(\underline{\omega}+[t]+\lambda))G(i(\underline{\omega}+[t]+\mu))}
        e^{-\pi i \omega([t]+\lambda+\mu)}
        H([t]\,|\,\lambda, \mu),
        \nonumber
    \end{align}
    where
    \begin{align*}
        H(t\,|\,\lambda,\mu)=
        \int_{-\epsilon+i\,\mathbb{R}}du\,
        e^{-\pi i (1+\omega+\omega t)u}\,
        \frac{G(i(\underline{\omega}+u-1/\omega))G(i(\underline{\omega}+t+u+\lambda+\mu-1))}
        {G(i(\underline{\omega}+u+\lambda))G(i(\underline{\omega}+u+\mu))}.
    \end{align*}
    Proposition \ref{prop:saalschutz} implies that
    \begin{align*}
        H(t\,|\,\lambda,\mu)=d(\lambda, \mu)
        \frac{e^{\pi i \omega(2(\lambda+\mu)+3t)}}
        {(1-e^{2\pi i \omega (\lambda+t)})(1-e^{2\pi i \omega (\mu+t)})}\,
        \frac{G(i(\underline{\omega}+\lambda+t))G(i(\underline{\omega}+\mu+t))}
        {G(i(\underline{\omega}+t))}.
    \end{align*}
    Substituting it into \eqref{eq:DO-initial-pf1},
    we obtain the desired equality from
    Proposition \ref{prop:Ohno-function-integral}.
\end{proof}

\subsection{Transport relation}

We show an important property of
the connected integral $\mathcal{I}(\bk, \bl\,|\,\lambda,\mu)$.
For an index $\bk=(k_{1}, \ldots , k_{r})$, we set
\begin{align*}
    \bk_{\rightarrow}=(k_{1}, \ldots , k_{r}, 1),    \qquad
    \bk_{\rightarrow \uparrow}=(\bk_{\rightarrow})_{\uparrow}=(k_{1}, \ldots , k_{r}, 2).
\end{align*}

\begin{thm}[Transport relations]
    \label{thm:DO-transport}
    For any index $\bk$ and $\bl$, it holds that
    \begin{align}
        \mathcal{I}(\bk_{\rightarrow}, \bl\,|\,\lambda, \mu)
         & =
        \mathcal{I}(\bk, \bl_{\uparrow}\,|\,\lambda, \mu)+
        \frac{e^{2\pi i \omega \lambda}-1}{2\pi i \omega}
        \frac{e^{2\pi i \omega \mu}-1}{2\pi i \omega}
        \mathcal{I}(\bk_{\rightarrow\uparrow}, \bl_{\uparrow}\,|\,\lambda, \mu),
        \label{eq:transport1} \\
        \mathcal{I}(\bk_{\uparrow}, \bl\,|\,\lambda, \mu)
         & =
        \mathcal{I}(\bk, \bl_{\rightarrow}\,|\,\lambda, \mu)-
        \frac{e^{2\pi i \omega \lambda}-1}{2\pi i \omega}
        \frac{e^{2\pi i \omega \mu}-1}{2\pi i \omega}
        \mathcal{I}(\bk_{\uparrow}, \bl_{\rightarrow\uparrow}\,|\,\lambda, \mu)
        \label{eq:transport2}
    \end{align}
    in a neighborhood of $(\lambda, \mu)=(0, 0)$.
\end{thm}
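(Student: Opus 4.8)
The plan is to prove the first identity \eqref{eq:transport1}; the second follows by the same method (or by exchanging the roles of $\bk$ and $\bl$ together with the symmetry $\mathcal{I}(\bk,\bl\,|\,\lambda,\mu)=\mathcal{I}(\bl,\bk\,|\,\lambda,\mu)$). The key point is a local functional relation for the connector $\Theta(t,u,\lambda,\mu)$ in the variable $u$, which, after integration against the kernel $1/(e^{2\pi i u}-1)$, converts a shift of the last index of $\bk$ into a combination of shifts of $\bl$. Concretely, I would first isolate the factor of the integrand of $\mathcal{I}(\bk_{\rightarrow},\bl\,|\,\lambda,\mu)$ that depends on the last new integration variable — call it $u_{s+1}$, the variable attached to the appended letter $1$ in $\bk_{\rightarrow}$ — together with $\Theta$. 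Since the appended index is $1$, the factor $\bigl(e^{2\pi i \omega [t]}/(1-e^{2\pi i\omega[t]})\bigr)^{k_{r+1}-1}$ is trivial, and what remains involving $u_{s+1}$ is $\dfrac{1}{e^{2\pi i u_{s+1}}-1}\,\Theta([t]+u_{s+1},[u],\lambda,\mu)$, where now $[t]$ denotes the sum of the original $r$ variables of $\bk$.

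Next I would establish the relevant contiguity relation for $\Theta$. Using the shift property \eqref{eq:G-property} of $G$ in the form $G(z+i)=-2i\sinh(\pi\omega(z+i\underline{\omega}))G(z)$ repeatedly on the six $G$-factors in $\Theta$, one rewrites $\Theta(t+1/\text{(shift)},\dots)$ — more precisely one compares $\Theta$ evaluated with its $t$-argument and with an argument shifted by a unit that matches the pole lattice of $1/(e^{2\pi i u}-1)$ — as a $\mathbb{C}$-linear combination (with coefficients that are ratios of $\sinh$ factors, i.e. ratios of the $1-e^{2\pi i\omega(\cdots)}$ appearing in $J_k$) of $\Theta$'s whose $u$-dependence corresponds to raising $l_s$ to $l_s+1$, appending a $1$ to $\bl$, etc. The net algebraic identity one needs is that the $u_{s+1}$-integral of $\dfrac{1}{e^{2\pi i u_{s+1}}-1}\Theta([t]+u_{s+1},[u],\lambda,\mu)$ equals $\Theta([t],[u],\lambda,\mu)$ times a correction, plus $\dfrac{(e^{2\pi i\omega\lambda}-1)(e^{2\pi i\omega\mu}-1)}{(2\pi i\omega)^2}$ times the analogous quantity with an extra $J$-type pole — this is exactly the shape of \eqref{eq:transport1}. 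The contour shift $\int_{-\epsilon+i\mathbb{R}}\to\int_{-\epsilon+1+i\mathbb{R}}$ picks up a residue at $u_{s+1}=0$ (as in Lemma \ref{lem:integral-duality} and Lemma \ref{lem:e1-g1-integral}), and the shifted integral is re-expressed using the $G$-shift relation; matching the two descriptions yields the claimed three-term relation.

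I would organize the computation so that the identity is first proved as an identity of integrands in the variable $u_{s+1}$ alone (everything else being spectator parameters), then integrated. The main obstacle I anticipate is bookkeeping the shift relations for the hyperbolic gamma function so that the $\sinh$ coefficients collapse exactly into the factors $\bigl(1-e^{2\pi i\omega(\lambda+t)}\bigr)$, $\bigl(1-e^{2\pi i\omega(\mu+t)}\bigr)$, $\bigl(1-e^{2\pi i\omega t}\bigr)$ that build $J_k$ and $\Theta$, with the correct $e^{-\pi i\omega tu}$ exponential prefactors surviving — in particular tracking the quadratic exponential factors so the connector on the right-hand side is again exactly $\Theta$ (with shifted arguments) and not $\Theta$ times a stray exponential. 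A secondary technical point is justifying the contour shift and the vanishing of the horizontal pieces at infinity, which follows from the estimate \eqref{eq:G-estimate} for $G$ in a strip together with the decay of $1/(e^{2\pi i u}-1)$, exactly as in the convergence argument behind Proposition \ref{prop:connector-convergence}; since both sides are holomorphic near $(\lambda,\mu)=(0,0)$ by that proposition, it suffices to verify the identity there.
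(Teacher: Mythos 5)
Your plan matches the paper's proof in all essentials: reduce to \eqref{eq:transport1} by the symmetry of $\mathcal{I}$, isolate the integration variable attached to the appended $1$ (whose $J$-factor is trivial), use the shift relation \eqref{eq:G-property} to recognize the relevant combination of integrands as a difference $\Xi(s)-\Xi(s-1)$ of a single auxiliary function, and evaluate the resulting contour-shift residue (which in the paper sits at $s=-1$ rather than at $0$, a direction-of-shift detail only) to produce exactly the conversion factor turning the remaining integral into $\mathcal{I}(\bk, \bl_{\uparrow}\,|\,\lambda,\mu)$. The convergence and holomorphy points you defer to Proposition \ref{prop:connector-convergence} are handled the same way in the paper.
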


\begin{proof}
    It is enough to prove \eqref{eq:transport1}
    since the symmetry
    $\mathcal{I}(\bk, \bl\,|\,\lambda,\mu)=\mathcal{I}(\bl,\bk\,|\,\lambda,\mu)$
    implies \eqref{eq:transport2}.

    Set $\bk=(k_{1}, \ldots , k_{r})$ and $\bl=(l_{1}, \ldots , l_{s})$.
    We see that
    \begin{align}
         &
        \mathcal{I}(\bk_{\rightarrow}, \bl\,|\,\lambda, \mu)-
        \frac{e^{2\pi i \omega \lambda}-1}{2\pi i \omega}
        \frac{e^{2\pi i \omega \mu}-1}{2\pi i \omega}
        \mathcal{I}(\bk_{\rightarrow\uparrow}, \bl_{\uparrow}\,|\,\lambda, \mu)
        \label{eq:transport-proof}          \\
         & =(2\pi i \omega)^{|\bk|+|\bl|+1}
        \prod_{a=1}^{r}\int_{-\epsilon+i\,\mathbb{R}}\frac{dt_{a}}{e^{2\pi i t_{a}}-1}
        \prod_{b=1}^{s}\int_{-\epsilon+i\,\mathbb{R}}\frac{du_{b}}{e^{2\pi i u_{b}}-1}
        \nonumber                           \\
         & \quad {}\times
        \prod_{a=1}^{r}J_{k_{a}}(t_{1}+\cdots +t_{a}\,|\,\lambda, \mu)
        \prod_{b=1}^{s}J_{l_{b}}(u_{1}+\cdots +u_{b}\,|\,\lambda, \mu)
        \left(\frac{e^{2\pi i \omega [u]}}{1-e^{2\pi i \omega [u]}}\right)^{l_{s}-1}
        \nonumber                           \\
         & \quad {}\times
        \frac{e^{-\pi i \omega [t][u]}}{1-e^{2\pi i \omega [u]}}
        \frac{G(i(\underline{\omega}+[u]))}
        {G(i(\underline{\omega}+[u]+\lambda))G(i(\underline{\omega}+[u]+\mu))}
        V([t], [u], \lambda, \mu),
        \nonumber
    \end{align}
    where $[t]=\sum_{a=1}^{r}t_{a}, [u]=\sum_{b=1}^{s}u_{b}$ and
    \begin{align*}
        V(t, u, \lambda, \mu)
         & =\int_{-\epsilon+i\,\mathbb{R}} \frac{ds}{e^{2\pi i s}-1}
        \left\{ 1-e^{2\pi i \omega u}
        \frac{(1-e^{2\pi i \omega (t+\lambda+s)})(1-e^{2\pi i \omega (t+\mu+s)})}
        {(1-e^{2\pi i \omega (t+s)})(1-e^{2\pi i \omega (t+u+\lambda+\mu+s)})}
        \right\}                                                     \\
         & \qquad {}\times
        \Xi(s\,|\,t, u, \lambda, \mu),                               \\
        \Xi(s\,|\,t, u, \lambda, \mu)
         & =
        e^{-\pi i \omega us}
        \frac{G(i(\underline{\omega}+t+s)) G(i(\underline{\omega}+t+u+\lambda+\mu+s))}
        {G(i(\underline{\omega}+t+\lambda+s)) G(i(\underline{\omega}+t+\mu+s))}.
    \end{align*}
    Using \eqref{eq:G-property}, we see that
    \begin{align*}
        V(t, u, \lambda, \mu)
         & =
        \int_{-\epsilon+i\,\mathbb{R}}
        \frac{ds}{e^{2\pi i s}-1}
        \left(\Xi(s\,|\,t,u,\lambda,\mu)-\Xi(s-1\,|\,t, u, \lambda, \mu)\right) \\
         & =\left(
        \int_{-\epsilon+i\,\mathbb{R}}-\int_{-\epsilon-1+i\,\mathbb{R}}
        \right)
        \frac{ds}{e^{2\pi i s}-1}\Xi(s\,|\,t,u,\lambda,\mu)                     \\
         & =2\pi i \mathop{\mathrm{Res}}_{s=-1}\frac{ds}{e^{2\pi i s}-1}
        \Xi(s\,|\,t, u, \lambda, \mu)                                           \\
         & =e^{2\pi i \omega u}
        \frac{(1-e^{2\pi i \omega (t+\lambda)})(1-e^{2\pi i \omega (t+\mu)})}
        {(1-e^{2\pi i \omega t})(1-e^{2\pi i \omega (t+u+\lambda+\mu)})}
        \frac{G(i(\underline{\omega}+t))G(i(\underline{\omega}+t+u+\lambda+\mu))}
        {G(i(\underline{\omega}+t+\lambda))G(i(\underline{\omega}+t+\mu))}.
    \end{align*}
    {}From this formula we see that
    the right hand side of \eqref{eq:transport-proof} is equal to
    $\mathcal{I}(\bk, \bl_{\uparrow}\,|\,\lambda, \mu)$.
\end{proof}

\subsection{Proof of extended double Ohno relations}

Now we can prove Theorem \ref{thm:extended-double-Ohno} in a similar way to
the proof given in \cite{HSS}.

Recall the definition \eqref{eq:d-normalization}
of $d(\lambda, \mu)$.
We see that $1/d(\lambda, \mu)$ is holomorphic
in a neighborhood of $(\lambda, \mu)=(0, 0)$.
Hence we can define the value $Z_{m, n}(\bk, \bl)$ for
indices $\bk, \bl$ and $m, n\ge 0$ by the expansion
\begin{align*}
    \frac{\mathcal{I}(\bk, \bl \,|\, \lambda, \mu)}{d(\lambda, \mu)}=
    \sum_{m, n\ge 0}Z_{m, n}(\bk, \bl)
    \left(\frac{e^{2\pi i \omega \lambda}-1}{2\pi i \omega}\right)^{m}
    \left(\frac{e^{2\pi i \omega \mu}-1}{2\pi i \omega}\right)^{n}.
\end{align*}
We define the symmetric $\mathbb{Q}$-bilinear map
$\Phi(\cdot, \cdot\,|\,\xi,\eta): y\mathfrak{h}\times y\mathfrak{h} \to
    \mathbb{C}[[\xi, \eta]]$ by
\begin{align*}
    \Phi(z_{\bk}, z_{\bl}\,|\, \xi, \eta)=
    \sum_{m, n\ge 0}Z_{m, n}(\bk, \bl)\xi^{m}\eta^{n}
\end{align*}
for any index $\bk$ and $\bl$,
and extend it to the $\mathbb{Q}$-bilinear map
$\Phi(\cdot, \cdot\,|\,\xi,\eta): y\mathfrak{h}[[X]]\times y\mathfrak{h}[[X]] \to
    \mathbb{C}[[\xi, \eta]]$ by
\begin{align*}
    \Phi({\textstyle \sum_{i\ge 0}w_{i}X^{i}},
    {\textstyle \sum_{j\ge 0}w_{j}'X^{j}} \,|\,\xi,\eta)=
    \sum_{i,j\ge 0}\Phi(w_{i}, w_{j}'\,|\,\xi,\eta)(\xi\eta)^{i+j},
\end{align*}
where $w_{i}, w_{j}' \in y\mathfrak{h}$.

Theorem \ref{thm:DO-transport} implies that
\begin{align}
    \nonumber
    \Phi(wy, w'\,|\,\xi,\eta) & =
    \Phi(w, w'x\,|\,\xi,\eta)+\xi\eta\,\Phi(wyx, w'x\,|\,\xi,\eta),
    \\
    \label{eq:transport-phi-x}
    \Phi(wx, w'\,|\,\xi,\eta) & =
    \Phi(w, w'y\,|\,\xi,\eta)-\xi\eta\,\Phi(wx, w'yx\,|\,\xi,\eta)
\end{align}
for $w, w' \in y\mathfrak{h}[[X]]$.
These relations imply that
$\Phi(wyx, w'\,|\,\xi,\eta)=\Phi(w, w'yx\,|\,\xi,\eta)$.
Hence it holds that
\begin{align*}
    \Phi(wy, w'\,|\,\xi,\eta)
     & =
    \Phi(w, w'x\,|\,\xi,\eta)+\xi\eta\,\Phi(w, w'xyx\,|\,\xi,\eta) \\
     & =
    \Phi(w, w'x(1+yx X)\,|\,\xi,\eta)=
    \Phi(w, w'\tau(y)\,|\,\xi,\eta).
\end{align*}
Using \eqref{eq:transport-phi-x} repeatedly,
we see that
\begin{align*}
    \Phi(wx, w'\,|\,\xi,\eta)=
    \sum_{j=0}^{N}(-\xi\eta)^{j}\Phi(w, w'y(xy)^{j}\,|\,\xi,\eta)+
    (-\xi\eta)^{N+1}\Phi(wx, w'(yx)^{N+1}\,|\,\xi,\eta)
\end{align*}
for all $N\ge 1$.
Therefore, in the formal power series ring $\mathbb{C}[[\xi, \eta]]$,
it holds that
\begin{align*}
    \Phi(wx, w'\,|\,\xi,\eta)
     & =
    \sum_{j=0}^{\infty}(-\xi\eta)^{j}\Phi(w, w'y(xy)^{j}\,|\,\xi,\eta) \\
     & =
    \Phi(w, w'y(1+xyX)^{-1}\,|\,\xi,\eta)
    =\Phi(w, w'\tau(x)\,|\,\xi,\eta).
\end{align*}
Thus we obtain
\begin{align}
    \Phi(wv, w'\,|\,\xi,\eta)=\Phi(w, w'\tau(v)\,|\,\xi,\eta)
    \label{eq:DO-pf1}
\end{align}
for $w, w' \in y\mathfrak{h}[[X]]$ and $v\in \{x, y\}$.

Theorem \ref{thm:DO-initial} and
the symmetry of $\Phi$ show that
\begin{align*}
    \Omega(ywx \,|\,\xi,\eta)=\Phi(yw, y \,|\, \xi,\eta), \qquad
    \Omega(y\tau(w)x \,|\,\xi,\eta)=\Phi(y, y\tau(w)\,|\,\xi,\eta)
\end{align*}
for $w \in \mathfrak{h}$.
Using \eqref{eq:DO-pf1} repeatedly, we see that
$\Phi(yw, y \,|\, \xi,\eta)=\Phi(y, y\tau(w)\,|\,\xi,\eta)$.
Thus we obtain the desired equality
$\Omega(ywx \,|\,\xi,\eta)=\Omega(y\tau(w)x \,|\,\xi,\eta)$.



\appendix

\section{Proof of Proposition \ref{prop:omega-MZV-limit1}
  and Proposition \ref{prop:limit-Zg}}
\label{sec:app1}

We fix two constants $\sigma$ and $A$ satisfying $0<\sigma<1$ and $0<A<\pi$.
Then there exists a positive constant
$M=M(\sigma, A)$ such that
\begin{align*}
    \left|\frac{z}{\sinh{z}}\right|\le Me^{-\sigma |\mathop{\mathrm{Re}}{z}|}
\end{align*}
if $|\mathop{\mathrm{Im}}{z}|\le A$.
Hence, if $-A/\pi \omega<\mathop{\mathrm{Re}}{t}<0$, it holds that
\begin{align*}
    \left|\frac{2\pi i \omega e^{2\pi i \omega t}}{1-e^{2\pi i \omega t}}\right|=
    \left|\frac{\pi i \omega t}{\sinh{(\pi i \omega t)}}\frac{e^{\pi i \omega t}}{t}\right|
    \le
    M \frac{e^{-\pi \omega (\sigma|\mathop{\mathrm{Im}}{t}|+\mathop{\mathrm{Im}}{t})}}{|t|}.
\end{align*}

For $\alpha\ge 1$, there exists a positive constant $B_{\alpha}$
such that
\begin{align*}
    \left|\binom{t+\alpha}{\alpha}\right|\le
    \binom{|t|+\alpha}{\alpha}\le B_{\alpha}(|t|+1)^{\alpha}
\end{align*}
for any $t \in \mathbb{C}$.

Now assume that $0<\epsilon<\min{(1, 1/r\omega, A/\pi r \omega)}$.
Denote by $E(t_{1}, \ldots , t_{r})$
the integrand of the right hand side of \eqref{eq:omegaMZV-formula}.
The above inequalities imply that
\begin{align}
    \left|E(t_{1}, \ldots , t_{r})\right|
     & \le C_{1} \omega^{\sum_{a=1}^{r}\alpha_{a}}
    \prod_{a=1}^{r}\frac{1}{|e^{2\pi i t_{a}}-1|}
    \frac{(|t_{a}|+1)^{\alpha_{a}}}{|t_{1}+\cdots +t_{a}|^{\beta_{a}+1}}
    \label{eq:omega-MZV-estimate1}                 \\
     & \times
    \exp{(
    -\pi\omega \sum_{a=1}^{r}(\beta_{a}+1)
    (\sigma|\mathop{\mathrm{Im}}{(t_{1}+\cdots +t_{a})}|+
    \mathop{\mathrm{Im}}{(t_{1}+\cdots +t_{a})})
    )}
    \nonumber
\end{align}
on the integral region, where $C_{1}$ is a constant independent of $\omega$.
Now set
$t_{a}=-\epsilon+i\epsilon x_{a}$
for $1\le a \le r$.
It holds that
\begin{align*}
    |t_{a}|+1\le \epsilon(1+|x_{a}|)+1\le (\epsilon+1)(|x_{a}|+1)
\end{align*}
and
\begin{align*}
    |t_{1}+\cdots +t_{a}|=\epsilon\sqrt{a^{2}+(x_{1}+\cdots+x_{a})^{2}}
    \ge \frac{\epsilon}{\sqrt{2}}
    (1+|x_{1}+\cdots +x_{a}|)
\end{align*}
for $1\le a \le r$.
Hence, using Proposition \ref{prop:I-estimate},
we see that the right hand side of \eqref{eq:omega-MZV-estimate1} is
estimated from above by
\begin{align*}
    C_{2} \, \omega^{\sum_{a=1}^{r}\alpha_{a}}
    \prod_{a=1}^{r}\frac{(|x_{a}|+1)^{\alpha_{a}}}{(|x_{1}+\cdots +x_{a}|+1)^{\beta_{a}+1}}
    \exp{(\pi \epsilon S(x_{1}, \ldots , x_{r}))},
\end{align*}
where $C_{2}$ is a constant independent of $\omega$ and
\begin{align*}
    S(x_{1}, \ldots , x_{r})=
    \sum_{a=1}^{r}(x_{a}-|x_{a}|)-\omega
    \sum_{a=1}^{r}(\beta_{a}+1)(\sigma |x_{1}+\cdots +x_{a}|+x_{1}+\cdots +x_{a}).
\end{align*}
It holds that
\begin{align*}
     &
    S(x_{1}, \ldots , x_{r})                                 \\
     & = \frac{1}{2}\sum_{a=1}^{r}(x_{a}-|x_{a}|)+
    \frac{1}{2}\sum_{a=1}^{r}
    \left\{(1-2\omega\sum_{j=a}^{r}(\beta_{j}+1))x_{a}-|x_{a}|\right\}-\omega
    \sigma \sum_{a=1}^{r} (\beta_{a}+1)|x_{1}+\cdots +x_{a}| \\
     & \le
    \frac{1}{2}\sum_{a=1}^{r}(x_{a}-|x_{a}|)+
    \frac{1}{2}\sum_{a=1}^{r}
    (\left|1-2\omega\sum_{j=a}^{r}(\beta_{j}+1)\right|-1)|x_{a}|-\omega
    \sigma \sum_{a=1}^{r} |x_{1}+\cdots +x_{a}|.
\end{align*}
Now assume that
\begin{align*}
    0<\omega<(2\sum_{j=1}^{r}(\beta_{j}+1))^{-1}.
\end{align*}
Then it holds that
\begin{align*}
    S(x_{1}, \ldots , x_{r})
     & \le
    \frac{1}{2}\sum_{a=1}^{r}(x_{a}-|x_{a}|)-\omega
    \sum_{a=1}^{r} \sum_{j=a}^{r}(\beta_{j}+1)|x_{a}|-\omega \sigma
    \sum_{a=1}^{r} |x_{1}+\cdots +x_{a}| \\
     & \le
    \frac{1}{2}\sum_{a=1}^{r}(x_{a}-|x_{a}|)-\omega\sigma
    \sum_{a=1}^{r} |x_{1}+\cdots +x_{a}|.
\end{align*}
As a result we obtain the estimate
\begin{align}
    |E(t_{1}, \ldots , t_{r})|
     & \le
    C_{2} \,
    \omega^{\sum_{a=1}^{r}\alpha_{a}}
    \prod_{a=1}^{r}\frac{(|x_{a}|+1)^{\alpha_{a}}}{(|x_{1}+\cdots +x_{a}|+1)^{\beta_{a}+1}}
    \label{eq:omega-MZV-integrand-estimate} \\
     & \times
    \exp{(\pi \epsilon \left(\frac{1}{2}\sum_{a=1}^{r}(x_{a}-|x_{a}|)-\omega \sigma
        \sum_{a=1}^{r}|x_{1}+\cdots +x_{a}| \right))}
    \nonumber
\end{align}
for sufficiently small $\omega>0$,
where $t_{a}=-\epsilon+i\epsilon x_{a} \, (1\le a \le r)$.

\begin{proof}[Proof of Proposition \ref{prop:omega-MZV-limit1} (i)]

    {}From \eqref{eq:omega-MZV-integrand-estimate} we see that
    \begin{align*}
         &
        \left|
        Z_{\omega}((e_{1}-g_{1})^{\alpha_{1}}g_{\beta_{1}+1}\cdots
        (e_{1}-g_{1})^{\alpha_{r}}g_{\beta_{r}+1})
        \right| \\
         & \le
        C_{3}\, \omega^{\sum_{a=1}^{r}\alpha_{a}}
        \left(\prod_{a=1}^{r}\int_{\mathbb{R}}dx_{a}\right)
        \prod_{a=1}^{r}\frac{(|x_{a}|+1)^{\alpha_{a}}}{|x_{1}+\cdots +x_{a}|+1}
        e^{-\pi \epsilon \sigma \omega \sum_{a=1}^{r}|x_{1}+\cdots +x_{a}|},
    \end{align*}
    for sufficiently small $\omega>0$,
    where $C_{3}$ is a constant independent of $\omega$.
    Now we change the integration variable to $y_{a}=x_{1}+\cdots +x_{a}$
    for $1\le a \le r$.
    We have
    \begin{align*}
        \prod_{a=1}^{r}(|x_{a}|+1)^{\alpha_{a}}
         & =
        (|y_{1}|+1)^{\alpha_{1}}
        \prod_{a=2}^{r}(|y_{a}-y_{a-1}|+1)^{\alpha_{a}}          \\
         & \le
        (|y_{1}|+1)^{\alpha_{1}}
        \prod_{a=2}^{r}((|y_{a}|+1)+(|y_{a-1}|+1))^{\alpha_{a}}  \\
         & =\sum_{\substack{\gamma_{1}, \ldots , \gamma_{r}\ge 0 \\
                \gamma_{1}+\cdots +\gamma_{r}=\alpha_{1}+\cdots +\alpha_{r}}}
        d_{\gamma_{1}, \ldots , \gamma_{r}}^{\alpha_{1}, \ldots , \alpha_{r}}
        \prod_{a=1}^{r}(|y_{a}|+1)^{\gamma_{a}},
    \end{align*}
    where $d_{\gamma_{1}, \ldots , \gamma_{r}}^{\alpha_{1}, \ldots , \alpha_{r}}$
    is a non-negative integer.
    Hence
    \begin{align*}
         &
        \left|
        Z_{\omega}((e_{1}-g_{1})^{\alpha_{1}}g_{\beta_{1}+1}\cdots
        (e_{1}-g_{1})^{\alpha_{r}}g_{\beta_{r}+1})
        \right|                                              \\
         & \le C_{3}
        \sum_{\substack{\gamma_{1}, \ldots , \gamma_{r}\ge 0 \\
                \gamma_{1}+\cdots +\gamma_{r}=\alpha_{1}+\cdots +\alpha_{r}}}
        d_{\gamma_{1}, \ldots , \gamma_{r}}^{\alpha_{1}, \ldots , \alpha_{r}}
        \prod_{a=1}^{r}\left(
        \omega^{\gamma_{a}} \int_{\mathbb{R}}dy\,
        (|y|+1)^{\gamma_{a}-1}e^{-\pi \epsilon \sigma \omega |y|}
        \right).
    \end{align*}
    By using Lemma \ref{lem:F-gamma} below, we see that the right hand side above is
    $O((-\log{\omega})^{r})$ as $\omega \to +0$.
\end{proof}

\begin{lem}\label{lem:F-gamma}
    For a non-negative integer $\gamma$, we set
    \begin{align*}
        F_{\gamma}(\rho)=\rho^{\gamma} \int_{0}^{\infty}dy\,(y+1)^{\gamma-1}e^{-\rho y }
        \qquad (\rho>0).
    \end{align*}
    Then it holds that $F_{0}(\rho)=O(-\log{\rho})$ and
    $F_{\gamma}(\rho)=O(1)$ for $\gamma \ge 1$ as $\rho\to +0$.
\end{lem}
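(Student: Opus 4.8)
The plan is to collapse both cases into one clean integral by the scaling substitution $u = \rho y$. With $dy = du/\rho$ and $y+1 = (u+\rho)/\rho$, one gets, for every $\gamma \ge 0$,
\begin{align*}
    F_{\gamma}(\rho) = \rho^{\gamma}\int_{0}^{\infty}\left(\frac{u+\rho}{\rho}\right)^{\gamma-1}e^{-u}\,\frac{du}{\rho}
    = \int_{0}^{\infty}(u+\rho)^{\gamma-1}e^{-u}\,du.
\end{align*}
This already isolates the $\rho$-dependence inside the factor $(u+\rho)^{\gamma-1}$, which is the only real content of the lemma; the rest is elementary estimation.

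First I would treat $\gamma \ge 1$. Then $\gamma - 1 \ge 0$, so for $0 < \rho \le 1$ we have the pointwise bound $(u+\rho)^{\gamma-1} \le (u+1)^{\gamma-1}$, and hence $F_{\gamma}(\rho) \le \int_{0}^{\infty}(u+1)^{\gamma-1}e^{-u}\,du$, a finite constant independent of $\rho$. This gives $F_{\gamma}(\rho) = O(1)$. (If a sharper statement were wanted, the dominated convergence theorem would give $F_{\gamma}(\rho)\to\Gamma(\gamma)$ as $\rho\to+0$, but for the $O(1)$ claim this is unnecessary.)

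Next I would treat $\gamma = 0$, where $F_{0}(\rho) = \int_{0}^{\infty}e^{-u}/(u+\rho)\,du$. I would split the integral at $u=1$. On $[1,\infty)$ the integrand is at most $e^{-u}$, so that part is at most $e^{-1}$. On $[0,1]$ the bound $e^{-u}\le 1$ reduces it to $\int_{0}^{1}du/(u+\rho) = \log(1+\rho)-\log\rho$, which is $O(-\log\rho)$ as $\rho\to+0$. Combining the two pieces yields $F_{0}(\rho) = O(-\log\rho)$.

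I do not expect any genuine obstacle here: once the scaling substitution is made the lemma is a short calculus exercise, and all the manipulations are legitimate because every integrand in sight is non-negative and the resulting integrals are plainly finite for $\rho$ in a punctured neighbourhood of $0$.
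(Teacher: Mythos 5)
Your proof is correct and follows essentially the same route as the paper: the substitution $u=\rho y$ reducing $F_{\gamma}(\rho)$ to $\int_{0}^{\infty}(u+\rho)^{\gamma-1}e^{-u}\,du$, the bound $(u+\rho)^{\gamma-1}\le(u+1)^{\gamma-1}$ for $\gamma\ge 1$ and $\rho\le 1$, and the split at $u=1$ for $\gamma=0$. The only cosmetic difference is that the paper bounds the tail integrand by $e^{-u}/u$ rather than $e^{-u}$, which changes nothing.
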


\begin{proof}
    Since $\rho>0$, it holds that
    \begin{align*}
        F_{\gamma}(\rho)=\int_{0}^{\infty}dy (y+\rho)^{\gamma-1}e^{-y}.
    \end{align*}
    Hence
    \begin{align*}
        0\le F_{0}(\rho)\le
        \int_{0}^{1}\frac{dy}{y+\rho}+\int_{1}^{\infty}dy\,\frac{e^{-y}}{y}=
        \log{(1+\frac{1}{\rho})}+\int_{1}^{\infty}dy\,\frac{e^{-y}}{y}=O(-\log{\rho})
    \end{align*}
    and
    \begin{align*}
        0\le F_{\gamma}(\rho)\le \int_{0}^{\infty}(y+1)^{\gamma-1}e^{-y}<+\infty
    \end{align*}
    for $\gamma\ge 1$ and $\rho \le 1$.
\end{proof}

Next we prove Proposition \ref{prop:omega-MZV-limit1} (ii).
We use the following inequality.

\begin{lem}\label{eq:estimate-psi}
    Suppose that $\eta>0$ and $1\le s\le t \le r$.
    Then the function
    \begin{align*}
        \Psi(x_{1}, \ldots , x_{r})=e^{\eta\sum_{j=1}^{r}(x_{j}-|x_{j}|)}
        \frac{|x_{s}|+1}{|x_{1}+\cdots +x_{t}|+1}.
    \end{align*}
    satisfies
    \begin{align*}
        0 \le \Psi(x_{1}, \ldots , x_{r})\le
        (\max{(1,  1/2\eta)})^{2}
    \end{align*}
    for all $x_{1}, \ldots , x_{r} \in \mathbb{R}$.
\end{lem}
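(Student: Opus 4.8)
The plan is to reduce the multivariable bound to a one–variable maximization. First I would note that $x_j-|x_j|\le 0$ for every $j$, so discarding the (non‑positive) contributions of the indices $j>t$ gives
\[
e^{\eta\sum_{j=1}^{r}(x_j-|x_j|)}\le e^{\eta\sum_{j=1}^{t}(x_j-|x_j|)},
\]
and hence $\Psi(x_1,\ldots,x_r)\le\widetilde\Psi(x_1,\ldots,x_t)$ with
\[
\widetilde\Psi(x_1,\ldots,x_t)=e^{\eta\sum_{j=1}^{t}(x_j-|x_j|)}\,\frac{|x_s|+1}{|x_1+\cdots+x_t|+1}.
\]
I would then set $p=x_1+\cdots+x_t$ and $q=\sum_{1\le j\le t,\ x_j<0}|x_j|\ge 0$, so that $\sum_{j=1}^{t}(x_j-|x_j|)=-2q$ and the exponential factor equals $e^{-2\eta q}$.

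The key elementary step is the inequality $|x_s|\le|p|+q$. This uses $1\le s\le t$, so that $x_s$ is one of the summands defining $p$: if $x_s\ge 0$ then $|x_s|=x_s\le\sum_{1\le j\le t,\ x_j\ge 0}x_j=p+q\le|p|+q$, and if $x_s<0$ then $|x_s|\le q\le|p|+q$ directly. Consequently
\[
\widetilde\Psi\le e^{-2\eta q}\,\frac{|p|+q+1}{|p|+1}=e^{-2\eta q}\Bigl(1+\frac{q}{|p|+1}\Bigr)\le e^{-2\eta q}(1+q),
\]
where the last inequality just uses $|p|\ge 0$; note this shows the worst case for the ratio is $p=0$.

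Finally I would maximize $g(q)=(1+q)e^{-2\eta q}$ over $q\ge 0$. Since $g'(q)=e^{-2\eta q}\bigl(1-2\eta(1+q)\bigr)$ vanishes only at $q^{*}=\tfrac{1}{2\eta}-1$, we get $g(q)\le g(0)=1$ when $\eta\ge\tfrac12$ (so $q^{*}\le 0$), and $g(q)\le g(q^{*})=\tfrac{1}{2\eta}e^{2\eta-1}<\tfrac{1}{2\eta}$ when $0<\eta<\tfrac12$. In both cases $g(q)\le\max(1,1/2\eta)$, hence
\[
0\le\Psi(x_1,\ldots,x_r)\le\max(1,1/2\eta)\le(\max(1,1/2\eta))^{2},
\]
which proves the claim (in fact with $\max(1,1/2\eta)$ in place of its square). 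Non‑negativity of $\Psi$ is immediate since all the factors are non‑negative.

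I do not expect a genuine obstacle here; the only point needing care is the bookkeeping behind $|x_s|\le|p|+q$ — in particular its dependence on the hypothesis $s\le t$ — together with the observation that, after this estimate, the ratio is largest at $p=0$, which turns the problem into the scalar maximization of $(1+q)e^{-2\eta q}$.
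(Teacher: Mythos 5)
Your proof is correct, and it actually yields the sharper bound $\Psi\le\max(1,1/2\eta)$ rather than the square stated in the lemma. The route differs from the paper's in how the variables are grouped. The paper isolates $x=x_{s}$ and lumps the remaining summands into $y=\sum_{1\le j\le t,\,j\ne s}x_{j}$, bounds $e^{\eta\sum_{j=1}^{t}(x_{j}-|x_{j}|)}\le e^{\eta(x-|x|)}e^{\eta(y-|y|)}$, then dominates each exponential factor by $C\psi(\cdot)$ with $C=\max(1,1/2\eta)$ and $\psi(x)=1$ for $x\ge 0$, $\psi(x)=(1+|x|)^{-1}$ for $x\le 0$ (via $1+t\le e^{t}$), and finishes by checking $\psi(x)\psi(y)\frac{|x|+1}{|x+y|+1}\le 1$ case by case; the two factors of $C$ are what produce the square in the statement. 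You instead aggregate all the negative mass into $q=\sum_{x_{j}<0}|x_{j}|$, so the exponential becomes exactly $e^{-2\eta q}$, prove the clean inequality $|x_{s}|\le|p|+q$ (which is where the hypothesis $s\le t$ enters, just as it does in the paper's inequality $|x|+|y|\le\sum_{j\le t}|x_{j}|$), and reduce to maximizing $(1+q)e^{-2\eta q}$ over $q\ge 0$. Your version trades the paper's two-variable case check for a one-variable calculus argument and buys a strictly better constant; since the lemma is only used as an upper bound, the weaker stated form is all the paper needs, so both arguments serve equally well downstream.
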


\begin{proof}
    Set $x=x_{s}$ and $y=\sum_{1\le j \le t, j\not=s}x_{j}$.
    Since $\eta(x_{j}-|x_{j}|)\le 0$ for $t<j\le r$ and
    $|x|+|y|\le \sum_{1\le j\le t}|x_{j}|$, we see that
    \begin{align*}
        0\le \Psi(x_{1}, \ldots , x_{r})\le
        e^{\eta\sum_{j=1}^{t}(x_{j}-|x_{j}|)}\frac{|x|+1}{|x+y|+1} \le
        e^{\eta(x-|x|+y-|y|)}\frac{|x|+1}{|x+y|+1}.
    \end{align*}
    Set $C=\max{(1, 1/2\eta)}$ and
    \begin{align*}
        \psi(x)=\left\{
        \begin{array}{cl}
            1 & (x\ge 0), \\ (1+|x|)^{-1} & (x\le 0).
        \end{array}
        \right.
    \end{align*}
    Using $0\le 1+t\le e^{t}$ for $t\ge 0$, we see that
    \begin{align*}
        e^{\eta(x-|x|)}\le C\psi(x)
    \end{align*}
    for any $x\in \mathbb{R}$.
    Therefore, it holds that
    \begin{align*}
        0\le \Psi(x_{1}, \ldots , x_{r})\le C^{2} \psi(x)\psi(y) \frac{|x|+1}{|x+y|+1}.
    \end{align*}
    We see that
    \begin{align*}
        0\le \psi(x)\psi(y) \frac{|x|+1}{|x+y|+1}\le 1
    \end{align*}
    by case checking, and this completes the proof.
\end{proof}

\begin{proof}[Proof of Proposition \ref{prop:omega-MZV-limit1} (ii)]
    Suppose that $\alpha_{s}, \beta_{t}\ge 1$ with $1\le s\le t \le r$.
        {}From \eqref{eq:omega-MZV-integrand-estimate}
    and Lemma \ref{eq:estimate-psi} with $\eta=\pi\epsilon/2$,
    we see that
    \begin{align*}
         &
        \left|
        Z_{\omega}((e_{1}-g_{1})^{\alpha_{1}}g_{\beta_{1}+1}\cdots
        (e_{1}-g_{1})^{\alpha_{r}}g_{\beta_{r}+1})
        \right|              \\
         & \le C_{4}
        \omega^{\sum_{a=1}^{r}\alpha_{a}}
        \left(\prod_{a=1}^{r}\int_{\mathbb{R}}dx_{a}\right)
        \frac{(|x_{s}|+1)^{\alpha_{s}-1}
        \prod_{\substack{a=1 \\ a\not=s}}^{r}(|x_{a}|+1)^{\alpha_{a}}}
        {\prod_{a=1}^{r}(|x_{1}+\cdots +x_{a}|+1)}
        e^{-\pi\epsilon\sigma\omega \sum_{a=1}^{r}|x_{1}+\cdots +x_{a}|},
    \end{align*}
    where $C_{4}$ is a constant independent of $\omega$.
    As shown in the proof of Proposition \ref{prop:omega-MZV-limit1} (i),
    the right hand side is $O(\omega (-\log{\omega})^{r})$,
    which goes to zero in the limit as $\omega \to +0$.
\end{proof}

We show the two lemmas below
in order to prove  Proposition \ref{prop:limit-Zg}.

\begin{lem}\label{lem:estimate-Im}
    Suppose that $\eta>0$.
    For a non-negative integer $m$, we set
    \begin{align*}
        I_{m}(s)=\int_{\mathbb{R}}dy\,
        \frac{(\log{(|y|+1)})^{m}}{|y|+1}\,e^{-\eta(|y|+|s-y|)}.
    \end{align*}
    Then there exists a positive constant $B=B(m, \eta)$ such that
    \begin{align*}
        0\le I_{m}(s) \le
        e^{-\eta|s|}\left(
        \frac{(\log{(|s|+1)})^{m+1}}{m+1}+B
        \right)
    \end{align*}
    for any $s \in \mathbb{R}$.
\end{lem}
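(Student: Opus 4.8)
The lower bound $0\le I_m(s)$ is immediate from positivity of the integrand, so the real work is the upper estimate. First I would note that the substitution $y\mapsto -y$ gives $I_m(-s)=I_m(s)$, so it suffices to treat $s\ge 0$.

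For $s\ge 0$ the plan is to split the real line as $(-\infty,0)\cup[0,s]\cup(s,\infty)$ and to use that $|y|+|s-y|$ equals $s$ on $[0,s]$, equals $2y-s$ on $(s,\infty)$, and equals $s-2y$ on $(-\infty,0)$. On the middle interval the exponential factor is constant, and one computes exactly
\begin{align*}
e^{-\eta s}\int_0^s\frac{(\log(y+1))^m}{y+1}\,dy
=e^{-\eta s}\,\frac{(\log(s+1))^{m+1}}{m+1},
\end{align*}
which is precisely the main term in the asserted bound. It then remains to bound the two tail integrals by $e^{-\eta s}$ times a constant depending only on $m$ and $\eta$.

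For the tail over $(s,\infty)$ I would substitute $y=s+v$, which turns $e^{-\eta(2y-s)}$ into $e^{-\eta s}e^{-2\eta v}$; estimating the remaining factor by $C_m:=\sup_{t\ge 0}(\log(t+1))^m/(t+1)$, which is finite because $(\log u)^m/u\to 0$ as $u\to\infty$, and integrating $e^{-2\eta v}$ over $[0,\infty)$ gives a bound $e^{-\eta s}C_m/(2\eta)$. For the tail over $(-\infty,0)$ the substitution $y=-w$ produces $e^{-\eta(s-2y)}=e^{-\eta s}e^{-2\eta w}$, and the same estimate applies. Summing the three contributions yields the lemma with $B=C_m/\eta$, and the case $s<0$ then follows from $I_m(s)=I_m(|s|)$. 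There is no genuine obstacle here; the only point requiring a little care is that the factor $e^{\eta s}$ hidden in the exponential on $(s,\infty)$ must be absorbed by the shift $y=s+v$ together with the uniform boundedness of $(\log(t+1))^m/(t+1)$ — everything else is a routine one-variable computation.
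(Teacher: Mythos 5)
Your proof is correct and follows essentially the same route as the paper: reduce to $s\ge 0$ by the symmetry $I_m(-s)=I_m(s)$, split $\mathbb{R}$ into $(-\infty,0)\cup[0,s]\cup(s,\infty)$, evaluate the middle piece exactly as $e^{-\eta s}(\log(s+1))^{m+1}/(m+1)$, and bound each tail by $e^{-\eta s}C_m/(2\eta)$ using the finiteness of $C_m=\sup_{y}(\log(|y|+1))^m/(|y|+1)$. The paper even arrives at the same constant $B=C_m/\eta$.
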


\begin{proof}
    We see that $I_{m}(-s)=I_{m}(s)\ge 0$ from the definition.
    Hence it is enough to consider the case where $s\ge 0$.

    For simplicity we set
    \begin{align*}
        f_{m}(y)=\frac{(\log{(|y|+1)})^{m}}{|y|+1}.
    \end{align*}
    Note that $f_{m}(y)=f_{m}(-y)$ and
    $C_{m}=\sup_{y\in\mathbb{R}}|f_{m}(y)|$ is finite.
    We have
    \begin{align*}
        \int_{0}^{s}dy\,f_{m}(y)=\frac{(\log{(s+1)})^{m+1}}{m+1}
    \end{align*}
    for $s\ge 0$.
    Hence it holds that
    \begin{align*}
        I_{m}(s)
         & =
        \left(\int_{-\infty}^{0}+\int_{0}^{s}+\int_{s}^{\infty}\right)dy\,
        f_{m}(y)e^{-\eta(|y|+|s-y|)}                               \\
         & =e^{-\eta s}\int_{0}^{\infty}dy\,f_{m}(-y)e^{-2\eta y}+
        e^{-\eta s}\int_{0}^{s}dy\,f_{m}(y)+
        e^{-\eta s}\int_{0}^{\infty}dy\,f_{m}(y+s)e^{-2\eta y}     \\
         & \le
        e^{-\eta s}\frac{C_{m}}{\eta}+
        e^{-\eta s}\frac{(\log{(s+1)})^{m+1}}{m+1}.
    \end{align*}
    This completes the proof.
\end{proof}

\begin{lem}\label{lem:K-integrable}
    Suppose that $r$ and $k$ are positive integers, $\eta>0$ and $A\ge 1$.
    Then the function
    \begin{align}
        K_{\eta, A}(x_{1}, \ldots , x_{r})=
        \left(\prod_{a=1}^{r-1}\frac{1}{|x_{1}+\cdots +x_{a}|+1} \right)
        \frac{e^{\eta\sum_{a=1}^{r}(x_{a}-|x_{a}|)}}{(|x_{1}+\cdots +x_{r}|+A)^{k+1}}
        \label{eq:def-K-estimate}
    \end{align}
    is integrable on $\mathbb{R}^{r}$ and it holds that
    \begin{align*}
        \int_{\mathbb{R}^{r}}
        K_{\eta, A}(x_{1}, \ldots , x_{r})dx_{1}\cdots dx_{r}=
        O(A^{-k}(\log{A})^{r-1}) \qquad (A \to +\infty).
    \end{align*}
\end{lem}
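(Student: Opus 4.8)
The plan is to pass to the partial-sum coordinates $y_a=x_1+\cdots+x_a$ (with $y_0=0$), whose Jacobian is $1$, and then to integrate out $y_1,\dots,y_{r-1}$ one variable at a time with the help of Lemma~\ref{lem:estimate-Im}, leaving a single integral over $y_r$ which is what manufactures the powers of $\log A$. Under this substitution $|x_a|=|y_a-y_{a-1}|$ and $\sum_{a=1}^{r}(x_a-|x_a|)=y_r-\sum_{a=1}^{r}|y_a-y_{a-1}|$, so the integrand becomes
\begin{align*}
\left(\prod_{a=1}^{r-1}\frac{1}{|y_a|+1}\right)\frac{e^{\eta y_r}}{(|y_r|+A)^{k+1}}\prod_{a=1}^{r}e^{-\eta|y_a-y_{a-1}|}.
\end{align*}
Since this is non-negative, Tonelli's theorem lets me evaluate the integral as an iterated integral in the order $y_1,\dots,y_r$, and the asserted integrability on $\mathbb{R}^r$ will follow once this iterated integral is shown to be finite.

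First I would prove, by induction on $j$ with $0\le j\le r-1$, that after integrating out $y_1,\dots,y_j$ the resulting function of $y_{j+1}$ is bounded above by $e^{-\eta|y_{j+1}|}P_j(\log(|y_{j+1}|+1))$, where $P_j$ is a polynomial of degree $j$ with non-negative coefficients depending only on $r$, $k$, $\eta$. The base case $j=0$ is immediate with $P_0\equiv 1$. For the step from $j$ to $j+1$, the factors containing $y_{j+1}$ are $(|y_{j+1}|+1)^{-1}$, the factor $e^{-\eta|y_{j+1}|}P_j(\log(|y_{j+1}|+1))$ supplied by the induction hypothesis, and $e^{-\eta|y_{j+2}-y_{j+1}|}$; expanding $P_j$ into monomials and applying Lemma~\ref{lem:estimate-Im} (with $s=y_{j+2}$) term by term---this is where non-negativity of the coefficients is used, so that the sum of the upper bounds is again an upper bound---produces the bound $e^{-\eta|y_{j+2}|}P_{j+1}(\log(|y_{j+2}|+1))$ with $P_{j+1}$ of degree $j+1$ and non-negative coefficients. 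Taking $j=r-1$, the whole integral is dominated by
\begin{align*}
\int_{\mathbb{R}}\frac{e^{\eta y_r}\,e^{-\eta|y_r|}\,P_{r-1}(\log(|y_r|+1))}{(|y_r|+A)^{k+1}}\,dy_r.
\end{align*}

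Finally I would split this integral at $y_r=0$. For $y_r<0$ one has $e^{\eta y_r}e^{-\eta|y_r|}=e^{2\eta y_r}$, and bounding $(|y_r|+A)^{-(k+1)}\le A^{-(k+1)}$ gives a contribution of size $O(A^{-(k+1)})$. For $y_r\ge 0$ one has $e^{\eta y_r}e^{-\eta|y_r|}=1$, so the contribution is $\int_0^{\infty}(y+A)^{-(k+1)}P_{r-1}(\log(y+1))\,dy$; the substitution $y=Au$ together with the elementary inequality $\log(Au+1)\le\log A+\log(u+1)+\log 2$ (valid for $A\ge1$, $u\ge0$) and the convergence of $\int_0^{\infty}(u+1)^{-(k+1)}(\log(u+1))^{m}\,du$ for $k\ge1$ then shows this is $O(A^{-k}(\log A)^{r-1})$. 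Adding the two pieces yields the claim. I expect the main obstacle to be the bookkeeping in the inductive step---controlling the degree and the positivity of the coefficients of $P_j$ so that Lemma~\ref{lem:estimate-Im} can be iterated cleanly---together with correctly carrying the leftover factor $e^{\eta y_r}$ through to the last integral, since it is precisely the failure of this factor to decay on $y_r\ge 0$ that produces the power $(\log A)^{r-1}$.
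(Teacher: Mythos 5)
Your proposal is correct and follows essentially the same route as the paper: change to the partial-sum coordinates $y_{a}=x_{1}+\cdots +x_{a}$, iterate Lemma \ref{lem:estimate-Im} to dominate the inner $(r-1)$-fold integral by $e^{-\eta|y_{r}|}P_{r-1}(\log(|y_{r}|+1))$ with $P_{r-1}$ of degree $r-1$, and then extract the $A^{-k}(\log A)^{r-1}$ bound from the remaining one-dimensional integral via the substitution $y=At$. Your explicit induction on the degree and positivity of the coefficients of $P_{j}$ is just a fleshed-out version of the paper's ``using Lemma \ref{lem:estimate-Im} repeatedly,'' and your split at $y_{r}=0$ is equivalent to the paper's bound $e^{\eta(y-|y|)}\le 1$.
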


\begin{proof}
    Change the integration variables to $y_{a}=x_{1}+\cdots +x_{a}$ for $1\le a \le r$,
    and we see that
    \begin{align*}
        \int_{\mathbb{R}^{r}}K_{\eta, A}(x_{1}, \ldots , x_{r})dx_{1}\cdots dx_{r}=
        \int_{\mathbb{R}}\frac{dy_{r}}{(|y_{r}|+A)^{k+1}}e^{\eta y_{r}}
        \left(
        \prod_{a=1}^{r-1}\int_{\mathbb{R}}\frac{dy_{a}}{|y_{a}|+1}
        e^{-\eta\sum_{a=1}^{r}|y_{a}-y_{a-1}|}
        \right),
    \end{align*}
    where $y_{0}=0$.
    Using Lemma \ref{lem:estimate-Im} repeatedly, we see that
    there exists a polynomial $P_{r-1}(x)$ of degree $(r-1)$
    such that
    \begin{align*}
        0\le
        \prod_{a=1}^{r-1}\int_{\mathbb{R}}\frac{dy_{a}}{|y_{a}|+1}
        e^{-\eta\sum_{a=1}^{r}|y_{a}-y_{a-1}|}
        \le
        e^{-\eta|y_{r}|} P_{r-1}(\log{(|y_{r}|+1)})
    \end{align*}
    for any $y_{r} \in \mathbb{R}$.
    Thus we see that
    \begin{align*}
        0
         & \le
        \int_{\mathbb{R}^{r}}K_{\eta, A}(x_{1}, \ldots , x_{r})dx_{1}\cdots dx_{r} \\
         & \le
        \int_{\mathbb{R}}dy
        \frac{P_{r-1}(\log{(|y|+1)})}{(|y|+A)^{k+1}}e^{\eta(y-|y|)}
        \le
        2 \int_{0}^{\infty}dy
        \frac{P_{r-1}(\log{(y+1)})}{(y+A)^{k+1}}<+\infty
    \end{align*}
    since $k\ge 1$.
    Let $m$ be a positive integer.
    Setting $y=At$, we see that
    \begin{align*}
        0
         & \le
        \int_{0}^{\infty}dy \, \frac{(\log{(y+1)})^{m}}{(y+A)^{k+1}}=
        A^{-k} \int_{0}^{\infty} dt \, \frac{(\log{A}+\log{(1/A+t)})^{m}}{(t+1)^{k+1}} \\
         & \le
        A^{-k} \int_{0}^{\infty} dt \, \frac{(\log{A}+\log{(1+t)})^{m}}{(t+1)^{k+1}}=
        O(A^{-k}(\log{A})^{m})
    \end{align*}
    as $A \to +\infty$.
    Since the degree of $P_{r-1}(x)$ is equal to $r-1$, we obtain the desired result.
\end{proof}

\begin{proof}[Proof of Proposition \ref{prop:limit-Zg}]
    Suppose that $\bk=(k_{1}, \ldots , k_{r})$ is admissible.
        {}From the definition we have
    \begin{align}
        Z_{\omega}(g_{k_{1}}\cdots g_{k_{r}})=
        \prod_{a=1}^{r}\int_{-\epsilon+i\,\mathbb{R}}\frac{dt_{a}}{e^{2\pi i t_{a}}-1}
        \prod_{a=1}^{r}\left(
        \frac{2\pi i \omega}{e^{-2\pi i \omega (t_{1}+\cdots +t_{a})}-1}
        \right)^{k_{a}}.
        \label{eq:omega-MZV-g-integral}
    \end{align}
    We denote by $E(t_{1}, \ldots , t_{r})$ the above integrand.
    Set $t_{a}=-\epsilon+i\epsilon x_{a}$
    for $1\le a \le r$ and $\eta=\pi\epsilon/2$.
    Then we have \eqref{eq:omega-MZV-integrand-estimate}
    with $\alpha_{a}=0$ and $\beta_{a}+1=k_{a}$ for $1\le a \le r$.
    Furthermore, since $k_{r}\ge 2$, it holds that
    $|E(t_{1}, \ldots , t_{r})|\le C_{2}K(x_{1}, \ldots , x_{r})$,
    where $K(x_{1}, \ldots , x_{r})$ is the function \eqref{eq:def-K-estimate}
    with $A=1$ and $k=1$,
    and it is integrable because of Lemma \ref{lem:K-integrable}.
    Hence we can apply the dominated convergence theorem in the limit as $\omega\to+0$,
    and we see that
    \begin{align*}
        \lim_{\omega \to +0}
        Z_{\omega}(g_{k_{1}} \cdots g_{k_{r}})=(-1)^{|\bk|}
        \prod_{a=1}^{r}\int_{-\epsilon+i\,\mathbb{R}}\frac{dt_{a}}{e^{2\pi i t_{a}}-1}
        \prod_{a=1}^{r}\frac{1}{(t_{1}+\cdots +t_{a})^{k_{a}}}.
    \end{align*}
    Let $M$ be a positive integer and
    shift the integral contour of $t_{r}, t_{r-1}, \ldots , t_{1}$
    to $-\epsilon-M+i\,\mathbb{R}$ in order
    by taking the residues at $t_{a}=-l_{a} \in \{-1, -2, \ldots, -M\}$.
    Then the right hand side above becomes
    \begin{align}
        \sum_{l_{1}, \ldots , l_{r}=1}^{M}
        \prod_{a=1}^{r}\frac{1}{(l_{1}+\cdots +l_{a})^{k_{a}}}+(-1)^{|\bk|}
        \sum_{b=1}^{r}\sum_{l_{b+1}, \ldots , l_{r}=1}^{M}
        S_{b}(l_{b+1}, \ldots , l_{r}\,|\, M),
        \label{eq:limit-Zg-pf1}
    \end{align}
    where
    \begin{align*}
        S_{b}(l_{b+1}, \ldots , l_{r}\,|\, M)
         & =
        \left(\prod_{a=1}^{b-1}\int_{-\epsilon+i\,\mathbb{R}}
        \frac{dt_{a}}{e^{2\pi i t_{a}}-1}
        \right)
        \int_{-\epsilon-M+i\,\mathbb{R}}
        \frac{dt_{b}}{e^{2\pi i t_{b}}-1} \\
         & \times
        \prod_{a=1}^{b}\frac{1}{(t_{1}+\cdots +t_{a})^{k_{a}}}
        \prod_{a=b+1}^{r}\frac{1}{(t_{1}+\cdots +t_{b}-l_{b+1}-\cdots -l_{r})^{k_{a}}}.
    \end{align*}
    Since the first term of \eqref{eq:limit-Zg-pf1} converges to
    the MZV $\zeta(k_{1}, \ldots , k_{r})$ as $M\to \infty$,
    it suffices to show that the second term vanishes
    in the limit.
    Set $t_{a}=-\epsilon+i\epsilon x_{a}$ for $1\le a \le b-1$
    and $t_{b}=-\epsilon-M+i\epsilon x_{b}$.
    Then, using Lemma \ref{lem:estimate-below}, we see that
    \begin{align*}
         &
        \left|S_{b}(l_{b+1}, \ldots , l_{r}\,|\,M)\right| \\
         & \le
        C_{5} \prod_{a=1}^{b}\int_{\mathbb{R}}dx_{a} \,
        \left(\prod_{a=1}^{b-1}\frac{1}{|x_{1}+\cdots +x_{a}|+1}\right)
        \frac{e^{\pi \epsilon \sum_{a=1}^{b}(x_{a}-|x_{a}|)}}
        {(|x_{1}+\cdots +x_{b}|+A)^{k_{b}+k_{b+1}+\cdots +k_{r}}},
    \end{align*}
    where $C_{5}$ is a constant independent of $M$ and
    $A=b+M/\epsilon$.
    Note that the right hand side above does not depend on
    $l_{b+1}, \ldots , l_{r}$ and
    behaves as $O(M^{-(k_{b}+\cdots +k_{r}-1)}(\log{M})^{b-1})$
    as $M\to \infty$ because of Lemma \ref{lem:K-integrable} and
    $k_{r}\ge 2$.
    Therefore, we have
    \begin{align*}
        \sum_{l_{b+1}, \ldots , l_{r}=1}^{M}
        S_{b}(l_{b+1}, \ldots , l_{r}\,|\, M)
         & =
        M^{r-b}O(M^{-(k_{b}+\cdots +k_{r}-1)}(\log{M})^{b-1}) \\
         & =
        O(M^{-1}(\log{M})^{b-1}) \to 0
        \qquad (M \to \infty).
    \end{align*}
    This completes the proof.
\end{proof}


\section{Proof of Proposition \ref{prop:Ohno-function-integral}
  and Proposition \ref{prop:connector-convergence}}
\label{sec:app2}

Using Lemma \ref{lem:estimate-below},
we obtain the following estimate of the function $J_{k}(t\,|\,\lambda,\mu)$.

\begin{lem}\label{lem:J-estimate}
    Suppose that
    \begin{align*}
        0<\rho<1/\omega, \quad
        {}-1/\omega<\mathop{\mathrm{Re}}{\lambda}-\rho<0,
        \quad
        {}-1/\omega<\mathop{\mathrm{Re}}{\mu}-\rho<0,
    \end{align*}
    and set $t=-\rho+iu$ with $u\in\mathbb{R}$.
    For $k\ge 1$, set
    \begin{align*}
        A_{k}(\rho\,|\,\lambda, \mu)=
        \frac{2^{(2-k)_{+}}e^{2 \pi \omega(|\mathop{\mathrm{Im}}{\lambda}|+|\mathop{\mathrm{Im}}{\mu}|)}}
        {c(2\pi \omega (\mathop{\mathrm{Re}}{\lambda}-\rho))
        c(2\pi \omega (\mathop{\mathrm{Re}}{\mu}-\rho))
        c(-2\pi \omega \rho)^{(k-2)_{+}}},
    \end{align*}
    where $c(y)$ is defined by \eqref{eq:def-c(y)} and $(m)_{+}=\max{(m, 0)}$.
    Then it holds that
    \begin{align*}
        |J_{1}(t\,|\,\lambda, \mu)|\le
        A_{1}(\rho\,|\,\lambda, \mu)
    \end{align*}
    and
    \begin{align*}
        |J_{k}(t\,|\, \lambda, \mu)|\le
        A_{k}(\rho\,|\,\lambda, \mu)
        e^{-2\pi \omega |u|} \le
        A_{k}(\rho\,|\,\lambda, \mu)
    \end{align*}
    for $k\ge 2$.
\end{lem}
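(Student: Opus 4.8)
The plan is to write $|J_{k}(t\,|\,\lambda,\mu)|$ as the quotient of the modulus of its single numerator factor by the moduli of its denominator factors, to bound the numerator from above trivially, and to bound each denominator factor from below by means of Lemma~\ref{lem:estimate-below}. Put $t=-\rho+iu$. Then $|e^{2\pi i\omega(k-1)t}|=e^{-2\pi\omega(k-1)u}$, and one rewrites the three exponentials occurring in the denominator as
\[
e^{2\pi i\omega t}=e^{-2\pi\omega u}e^{-2\pi i\omega\rho},\qquad
e^{2\pi i\omega(\lambda+t)}=e^{-2\pi\omega(\mathop{\mathrm{Im}}{\lambda}+u)}e^{2\pi i\omega(\mathop{\mathrm{Re}}{\lambda}-\rho)},
\]
and similarly with $\mu$ in place of $\lambda$. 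Applying Lemma~\ref{lem:estimate-below} (and using that $c$ is even) yields
\[
|1-e^{2\pi i\omega t}|\ge c(-2\pi\omega\rho)\,e^{\pi\omega(|u|-u)},\qquad
|1-e^{2\pi i\omega(\lambda+t)}|\ge c\bigl(2\pi\omega(\mathop{\mathrm{Re}}{\lambda}-\rho)\bigr)\,e^{\max(-2\pi\omega(\mathop{\mathrm{Im}}{\lambda}+u),\,0)},
\]
and the hypotheses $0<\rho<1/\omega$, $-1/\omega<\mathop{\mathrm{Re}}{\lambda}-\rho<0$ and $-1/\omega<\mathop{\mathrm{Re}}{\mu}-\rho<0$ ensure that the relevant values of $c$ are strictly positive, so $A_{k}(\rho\,|\,\lambda,\mu)$ is well defined.

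Next I would record the elementary inequality $\max(-2\pi\omega(\mathop{\mathrm{Im}}{\lambda}+u),0)\ge\pi\omega(|u|-u)-2\pi\omega|\mathop{\mathrm{Im}}{\lambda}|$ (which follows from $\max(a-b,0)\ge\max(a,0)-b$ for $b\ge 0$, applied with $a=-2\pi\omega u$), upgrading the last bound to
\[
|1-e^{2\pi i\omega(\lambda+t)}|\ge c\bigl(2\pi\omega(\mathop{\mathrm{Re}}{\lambda}-\rho)\bigr)\,e^{\pi\omega(|u|-u)}\,e^{-2\pi\omega|\mathop{\mathrm{Im}}{\lambda}|},
\]
and likewise for $\mu$. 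For $k\ge 2$ the denominator of $J_{k}$ is a product of exactly $k$ such factors (two from $\lambda,\mu$, and $k-2$ copies of $1-e^{2\pi i\omega t}$), each contributing a factor $e^{\pi\omega(|u|-u)}$; dividing the numerator bound $e^{-2\pi\omega(k-1)u}$ by the product, the $u$-dependent part of the exponent is $\pi\omega(2-k)u-k\pi\omega|u|$, which, since $2-k\le 0$, is $\le\pi\omega(k-2)|u|-k\pi\omega|u|=-2\pi\omega|u|$. The surviving constants assemble exactly into $A_{k}(\rho\,|\,\lambda,\mu)$ (here $(2-k)_{+}=0$ and $(k-2)_{+}=k-2$), giving $|J_{k}(t\,|\,\lambda,\mu)|\le A_{k}(\rho\,|\,\lambda,\mu)e^{-2\pi\omega|u|}\le A_{k}(\rho\,|\,\lambda,\mu)$.

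For $k=1$ the factor $1-e^{2\pi i\omega t}$ appears in the numerator rather than the denominator, so there I would instead use the crude bound $|1-e^{2\pi i\omega t}|\le 1+e^{-2\pi\omega u}\le 2e^{\pi\omega(|u|-u)}$; this is the origin of the constant $2^{(2-k)_{+}}$. Combining it with the two lower bounds for $|1-e^{2\pi i\omega(\lambda+t)}|$ and $|1-e^{2\pi i\omega(\mu+t)}|$, the factors $e^{\pi\omega(|u|-u)}$ cancel completely, and one is left with $A_{1}(\rho\,|\,\lambda,\mu)$ and no exponential in $u$, as asserted. The only point that needs care is the bookkeeping: tracking the sign of $2-k$ and the precise number of denominator factors so that all the $u$-dependent exponentials cancel down to exactly $e^{-2\pi\omega|u|}$ for $k\ge 2$ and to $1$ for $k=1$; once the displayed $\max$-inequality is in hand, the rest is a routine computation.
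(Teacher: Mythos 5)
Your proof is correct and follows exactly the route the paper intends: the paper gives no written-out proof, stating only that the estimate is obtained ``Using Lemma~\ref{lem:estimate-below}'', and your argument is precisely that — a factor-by-factor application of that lemma, with the bookkeeping (the $\max$-inequality absorbing $|\mathop{\mathrm{Im}}{\lambda}|$, $|\mathop{\mathrm{Im}}{\mu}|$, the count of $k$ denominator factors for $k\ge 2$, and the crude numerator bound producing the factor $2$ when $k=1$) carried out correctly.
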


We will use the inequality below.

\begin{lem}\label{lem:estimate-above}
    If $0<\mu<1/4\pi \omega$ and $|z|<\mu$,
    then $|e^{2\pi i \omega z}-1|<6 \pi \omega \mu$.
\end{lem}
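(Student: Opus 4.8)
The plan is to reduce the claim to the elementary one-variable inequality $e^{t}-1\le te^{t}$ for $t\ge 0$. First I would set $w=2\pi i\omega z$, so that the hypotheses $0<\omega$, $|z|<\mu$ and $\mu<1/(4\pi\omega)$ give
\begin{align*}
|w|=2\pi\omega|z|<2\pi\omega\mu<\tfrac12 .
\end{align*}
Next, applying the triangle inequality term by term to the power series $e^{w}-1=\sum_{n\ge 1}w^{n}/n!$ yields
\begin{align*}
|e^{w}-1|\le\sum_{n\ge 1}\frac{|w|^{n}}{n!}=e^{|w|}-1 .
\end{align*}
I would then invoke $e^{t}-1\le te^{t}$ with $t=|w|$ to get $|e^{w}-1|\le|w|e^{|w|}<|w|e^{1/2}<2|w|$, using $e^{1/2}<2$. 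Unwinding the substitution, $|e^{2\pi i\omega z}-1|<2|w|=4\pi\omega|z|<4\pi\omega\mu<6\pi\omega\mu$, which is the asserted bound, with room to spare.

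The auxiliary inequality $e^{t}-1\le te^{t}$ for $t\ge 0$ is itself immediate: the function $f(t)=te^{t}-e^{t}+1$ vanishes at $t=0$ and has derivative $f'(t)=te^{t}\ge 0$, so $f(t)\ge 0$ on $[0,\infty)$. (Alternatively one can write $e^{w}-1=w\int_{0}^{1}e^{sw}\,ds$ and estimate the integrand by $e^{|w|}$, which gives the same conclusion.)

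I expect no genuine obstacle here; this is a deliberately crude estimate. The constant $6$ in the statement is not sharp — the argument above already yields $4\pi\omega\mu$, and with marginally more care one gets $2\pi\omega\mu(1+\varepsilon)$ — its only role being to provide a clean, uniform bound for later use.
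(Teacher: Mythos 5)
Your proof is correct, and it takes a genuinely different route from the paper. The paper works with the real and imaginary parts separately: writing $z=x+iy$, it expands $|e^{2\pi i\omega z}-1|^{2}=(e^{-2\pi\omega y}-\cos(2\pi\omega x))^{2}+\sin^{2}(2\pi\omega x)$, bounds $e^{\pm 2\pi\omega y}$ via $1-2t<e^{-t}\le e^{t}<1+2t$ for $0\le t<1/2$ and $\cos(2\pi\omega x)$ via $\cos\theta\ge 1-\theta^{2}/2$, and after algebra lands on $4(\pi\omega\mu)^{2}(5+4\pi\omega\mu)<(6\pi\omega\mu)^{2}$ --- which is where the constant $6$ comes from. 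You instead use the power-series comparison $|e^{w}-1|\le e^{|w|}-1\le |w|e^{|w|}$ together with $|w|=2\pi\omega|z|<1/2$ and $e^{1/2}<2$, which is shorter, avoids the case analysis on real and imaginary parts entirely, and yields the sharper bound $4\pi\omega\mu$. Since the lemma is only used as a crude uniform estimate (in the proof of Proposition \ref{prop:Ohno-function-integral}, to show $|e^{2\pi i\omega\lambda}-1|<2\omega\epsilon$ for $|\lambda|<\epsilon/3\pi$), either constant serves equally well; your argument would in fact let one tighten the hypothesis $|\lambda|<\epsilon/3\pi$ slightly, though there is no need to.
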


\begin{proof}
    Set $x=\mathop{\mathrm{Re}}{z}$ and $y=\mathop{\mathrm{Im}}{z}$.
    For $0 \le t<1/2$, it holds that $1-2t<e^{-t}\le e^{t}<1+2t$.
        {}From the assumption, $|2\pi \omega y|<2\pi \omega \mu<1/2$,
    and hence $1-4\pi \omega \mu<e^{-2\pi \omega y}<1+4\pi \omega \mu$.
    Since $1\ge \cos{(2\pi \omega x)}\ge 1-(2\pi \omega x)^{2}/2\ge 1-2(\pi \omega \mu)^{2}$,
    it holds that
    \begin{align*}
        |e^{2\pi i \omega z}-1|^{2}
         & =
        (e^{-2\pi \omega y}-\cos{(2\pi \omega x)})^{2}+\sin^{2}(2\pi \omega x) \\
         & \le
        (1+4\pi \omega \mu-\cos{(2\pi \omega x)})^{2}+\sin^{2}(2\pi \omega x)  \\
         & =(1+4\pi \omega \mu)^{2}-2(1+4\pi \omega\mu)\cos{(2\pi \omega x)}+1 \\
         & \le
        (1+4\pi \omega \mu)^{2}-2(1+4\pi \omega\mu)(1-2(\pi \omega \mu)^{2})+1 \\
         & =
        4(\pi \omega \mu)^{2}(5+4\pi \omega \mu)<(6\pi \omega \mu)^{2}
    \end{align*}
    since $\pi \omega\mu<1/4$.
\end{proof}

\begin{proof}[Proof of Proposition \ref{prop:Ohno-function-integral}]
    It holds that
    \begin{align}
         &
        \sum_{\substack{0\le m_{1}, \ldots , m_{r}\le M \\
                0\le n_{1}, \ldots , n_{r}\le M}}
        \left| \zeta_{\omega}(k_{1}+m_{1}+n_{1}, \ldots , k_{r}+m_{r}+n_{r})
        \left(\frac{e^{2\pi i \omega \lambda}-1}{2\pi i \omega}\right)^{\sum_{a=1}^{r}m_{a}}
        \left(\frac{e^{2\pi i \omega \mu}-1}{2\pi i \omega}\right)^{\sum_{a=1}^{r}n_{a}}
        \right|
        \label{eq:Ohno-function-pf1}                    \\
         & \le (2\pi \omega)^{|\bk|}
        \left(\prod_{a=1}^{r}\int_{-\epsilon+i\,\mathbb{R}}|dt_{a}|\right)
        \prod_{a=1}^{r}
        \frac{|e^{2\pi i \omega (k_{a}-1)(t_{1}+\cdots +t_{a})}|}
        {|e^{2\pi i \omega t_{a}}-1| |1-e^{2\pi i \omega (t_{1}+\cdots +t_{a})}|^{k_{a}}}
        \nonumber                                       \\
         & \qquad \qquad {}\times
        \sum_{\substack{0\le m_{1}, \ldots , m_{r}\le M \\ 0\le n_{1}, \ldots , n_{r}\le M}}
        \prod_{a=1}^{r}
        \left|\frac{e^{2\pi i \omega \lambda}-1}{e^{-2\pi i \omega(t_{1}+\cdots +t_{a})}-1}
        \right|^{m_{a}}
        \left|\frac{e^{2\pi i \omega \mu}-1}{e^{-2\pi i \omega(t_{1}+\cdots +t_{a})}-1}
        \right|^{n_{a}}.
        \nonumber
    \end{align}
    Since $0<\epsilon<1/2\omega r$, it holds that
    $0<\epsilon/3\pi<1/4\pi\omega$ and
    $0<2\omega\epsilon<1$.
    Hence, from Lemma \ref{lem:estimate-above} and $|\lambda|<\epsilon/3\pi$,
    we see that
    \begin{align*}
        |e^{2\pi i \omega \lambda}-1|<2\omega \epsilon
        <\sin{(\pi \omega \epsilon)}=c(\pi \omega \epsilon).
    \end{align*}
    Since $0<2\pi \omega r \epsilon<\pi$,
    Lemma \ref{lem:estimate-below} implies that
    \begin{align*}
        |e^{-2\pi i \omega (t_{1}+\cdots +t_{a})}-1|\ge
        c(2\pi \omega a \epsilon)\ge c(2\pi \omega \epsilon)
    \end{align*}
    for $t_{1}, \ldots , t_{a} \in -\epsilon+i\,\mathbb{R}$ and $1\le a \le r$.
    Thus we see that
    \begin{align*}
        \left|\frac{e^{2\pi i \lambda}-1}{e^{-2\pi i \omega (t_{1}+\cdots +t_{a})}-1}
        \right|<\frac{c(\pi \omega \epsilon)}{c(2\pi \omega \epsilon)}<1.
    \end{align*}
    This inequality with $\lambda$ replaced by $\mu$ also holds.

    Denote by $E(t_{1}, \ldots , t_{r}|\,\lambda, \mu)$
    the integrand of \eqref{eq:ohno-integral}.
    If $|u|<c<1$, we have
    \begin{align*}
        \sum_{0\le m\le M}|u|^{m}=\frac{1-|u|^{M+1}}{1-|u|} \le
        \frac{1}{1-|u|}\le \frac{1+c}{1-c}\frac{1}{|1-u|}.
    \end{align*}
    By using it, we see that there exists a positive constant
    $C(\omega, r, \epsilon)$ such that
    the integrand of the right hand side of
    \eqref{eq:Ohno-function-pf1} is estimated from above
    on the integral region by
    \begin{align*}
        C(\omega, r, \epsilon)\left|E(t_{1}, \ldots , t_{r}\,|\, \lambda, \mu) \right|,
    \end{align*}
    which does not depend on $M$.
    Set $u_{a}=\mathop{\mathrm{Im}}{u_{a}}$ for $1\le a \le r$.
        {}From Lemma \ref{lem:J-estimate} and $k_{r}\ge 2$, we see that
    \begin{align*}
        \left|E(t_{1}, \ldots , t_{r}\,|\, \lambda, \mu) \right| \le
        \left(\prod_{a=1}^{r}
        \frac{A_{k_{a}}(a\epsilon\,|\,\lambda, \mu)}{c(-2\pi \omega a \epsilon)} \right)
        \exp{(\pi S(u_{1}, \ldots , u_{r}))},
    \end{align*}
    where
    \begin{align*}
        S(u_{1}, \ldots , u_{r})=
        \sum_{a=1}^{r}(u_{a}-|u_{a}|)-2\omega |u_{1}+\cdots +u_{r}|.
    \end{align*}
    It holds that
    \begin{align*}
        S(u_{1}, \ldots , u_{r})
        \le
        \sum_{a=1}^{r}(u_{a}-|u_{a}|)-\omega (u_{1}+\cdots +u_{r})
        \le (|1-\omega|-1)\sum_{a=1}^{r}|u_{a}|.
    \end{align*}
    Hence the function $\exp{(\pi S(u_{1}, \ldots , u_{r}))}$
    is integrable on $\mathbb{R}^{r}$.
    Therefore, by using
    the dominated convergence theorem,
    we see that the infinite sum
    $\mathcal{O}(\bk \,|\, \lambda, \mu)$ is absolutely convergent
    and the equality \eqref{eq:ohno-integral} holds
    if $(\lambda, \mu)$ belongs to the region \eqref{eq:Ohno-region}.

    For $k\ge 1$ and $1\le a \le r$, we have
    \begin{align*}
        \frac{\partial}{\partial \lambda}J_{k}(t_{1}+\cdots +t_{a}\,|\,\lambda, \mu)=
        \frac{2\pi i \omega}{e^{-2\pi i \omega (\lambda+t_{1}+\cdots +t_{a})}-1}
        J_{k}(t_{1}+\cdots +t_{a}\,|\,\lambda, \mu)
    \end{align*}
    and
    \begin{align}
        \left|\frac{1}{e^{-2\pi i \omega (\lambda+t_{1}+\cdots +t_{a})}-1}\right|\le
        \frac{1}{c(2\pi \omega(a\epsilon-\mathop{\mathrm{Re}}{\lambda}))}
        \label{eq:Ohno-function-pf2}
    \end{align}
    in the integral region.
    Therefore, the function $\partial E/\partial \lambda$
    is estimated from above by an integrable function
    which does not depend on $\lambda$ or $\mu$
    in the region \eqref{eq:Ohno-region}.
    We also have a similar estimate for $\partial E/\partial \mu$.
    Therefore the function
    $\mathcal{O}(\bk \,|\, \lambda, \mu)$ is holomorphic
    in the region \eqref{eq:Ohno-region}.
\end{proof}

\begin{proof}[Proof of Proposition \ref{prop:connector-convergence}]
    First we estimate the function $\Theta(t, u, \lambda, \mu)$.
    Set $t=-\epsilon+ix$ and $u=-\rho+iy$ with $\epsilon, \rho>0$ and $x, y \in \mathbb{R}$.
    Suppose that all the real parts of
    $t, u, t+\lambda, t+\mu, u+\lambda, u+\mu$
    belong to the interval $(-2\underline{\omega}, 0)$
    and $-1/\omega<\mathop{\mathrm{Re}}{(t+u+\lambda+\mu)}<0$.
    By using Lemma \ref{lem:estimate-below}, \eqref{eq:G-property} and
    \eqref{eq:G-estimate},
    we see that
    \begin{align}
        \left|\Theta(t, u, \lambda, \mu)\right|\le
        \frac{e^{P(\epsilon, \rho, |\lambda|, |\mu|, \omega)}}
        {c(2\pi \omega(\mathop{\mathrm{Re}}{(\lambda+\mu)-\epsilon-\rho}))}\,
        \exp{(-\pi \omega(\rho (x+|x|)+\epsilon (y+|y|)))},
        \label{eq:theta-estimate}
    \end{align}
    where $P$ is some polynomial and $c(y)$ is defined by \eqref{eq:def-c(y)}.

    We denote by $E(t, u\,|\,\lambda, \mu)$
    the integrand of \eqref{eq:def-connector}.
    Set $x_{a}=\mathop{\mathrm{Im}}{t_{a}} \, (1\le a \le r)$ and
    $y_{b}=\mathop{\mathrm{Im}}{u_{b}} \, (1\le b \le s)$.
    Using \eqref{eq:theta-estimate}, we see that
    \begin{align*}
        |E(t, u\,|\, \lambda, \mu)|
         & \le
        \frac{\prod_{a=1}^{r-1}
            A_{k_{a}}(-a\epsilon\,|\,\lambda, \mu)
            \prod_{b=1}^{s-1}
            A_{l_{b}}(-b\epsilon\,|\,\lambda,\mu)}
        {c(-2\pi \epsilon)^{r+s}
            c(2\pi \omega r\epsilon)^{k_{r}-1}
        c(2\pi \omega s\epsilon)^{l_{s}-1}} \\
         & \quad {}\times
        \frac{e^{P(r\epsilon, s\epsilon, |\lambda|, |\mu|, \omega)}}
        {c(2\pi \omega(\mathop{\mathrm{Re}}{(\lambda+\mu)-(r+s)\epsilon}))}\,
        \exp{(\pi S(x, y))},
    \end{align*}
    where
    \begin{align*}
        S(x, y)=\sum_{a=1}^{r}(x_{a}-|x_{a}|)+\sum_{b=1}^{s}(y_{b}-|y_{b}|)-\omega\epsilon
        \left\{s \left(\sum_{a=1}^{r}x_{a}+\left|\sum_{a=1}^{r}x_{a}\right|\right)+
        r\left(\sum_{b=1}^{s}y_{b}+\left|\sum_{b=1}^{s}y_{b}\right|\right) \right\}.
    \end{align*}
    It holds that
    \begin{align*}
        S(x, y) & \le
        \sum_{a=1}^{r}(x_{a}-|x_{a}|)+\sum_{b=1}^{s}(y_{b}-|y_{b}|)-\omega\epsilon
        \left(\sum_{a=1}^{r}x_{a}+\sum_{b=1}^{s}y_{b}\right)                                          \\
                & \le (|1-\omega\epsilon|-1)\left(\sum_{a=1}^{r}|x_{a}|+\sum_{b=1}^{s}|y_{b}|\right).
    \end{align*}
    Since $0<\epsilon<1/\omega$, we have $|1-\omega\epsilon|-1<0$.
    Hence     $\mathcal{I}(\bk, \bl\,|\,\lambda, \mu)$ is absolutely convergent.
        {}From \eqref{eq:G'/G-estimate}, \eqref{eq:Ohno-function-pf2} and
    the above estimate of the integrand $E(t, u\,|\,\lambda, \mu)$,
    we see that
    the integral $\mathcal{I}(\bk, \bl\,|\,\lambda, \mu)$
    defines a holomorphic function on $B$.
\end{proof}


\end{document}